\theoremstyle{plain}
\newtheorem{thm}{Theorem}[section]
\newtheorem*{unnumthm}{Theorem}
\newtheorem{thmx}{Theorem}
\newtheorem{lem}[thm]{Lemma}
\newtheorem{prop}[thm]{Proposition}
\newtheorem{propx}[thmx]{Proposition}
\newtheorem{cor}[thm]{Corollary}
\theoremstyle{definition}
\newtheorem{defn}{Definition}[section]
\newtheorem*{unnumdefn}{Definition}
\newtheorem{ex}{Example}[section]
\newtheorem{rmk}[thm]{Remark}
\newtheorem{ques}[thm]{Question}
\DeclareMathOperator{\add}{\mathsf{add}}
\DeclareMathOperator{\F}{\mathcal{F}}
\DeclareMathOperator{\tr}{Tr}
\DeclareMathOperator{\op}{op}
\DeclareMathOperator{\Hom}{Hom}
\DeclareMathOperator{\Ext}{Ext}
\DeclareMathOperator{\GL}{GL}
\DeclareMathOperator{\tilt}{-\,\mathsf{tilt}}
\DeclareMathOperator{\ind}{\operatorname{-\,ind}}
\DeclareMathOperator{\MOD}{\operatorname{-\,MOD}}
\renewcommand\mod{{\operatorname{\,-\,mod}}}
\newcommand{\spmat}[1]{%
  \left[
  \let~=&
  \begin{smallmatrix}#1\end{smallmatrix}
  \right]
}}
\definecolor{bettergreen}{RGB}{0,125,0}
\definecolor{alizarin}{rgb}{0.85, 0.15, 0.26}
\definecolor{azure}{rgb}{0.0, 0.5, 1.0}
\definecolor{upmaroon}{rgb}{0.70, 0.07, 0.07}
\definecolor{DavidGreen}{RGB}{29,162,55}
\title{Galois coverings, $\tau$-rigidity and mutations}
\author{Charles Paquette$^{1,2}$, Deepanshu Prasad$^2$, and David Wehlau$^{1,2}$}
\address{$^1$Department~of Mathematics and Computer Science, Royal Military college of canada\\
$^2$Department of mathematics and Statistics, Queen's university, Kingston on,Canada}
\email{charles.paquette.math@gmail.com, deepanshu.prasad@gmail.com, wehlau@rmc.ca}
\keywords{Quivers, $\tau$-Tilting, Galois Covers, Push-Down Functors, $G$-Graded, Mutations}
\begin{document}

\begin{abstract}
For an algebraically closed field $\mathbb{K}$, we consider a Galois $G$-covering $\mathcal{B} \to \mathcal{A}$ between locally bounded $\mathbb{K}$-categories given by bound quivers, where $G$ is torsion-free and acts freely on the objects of $\mathcal{B}$. We define the notion of $(G,\tau_\mathcal{B})$-rigid subcategory and of support $(G,\tau_\mathcal{B})$-tilting pairs over $\mathcal{B}$-$\rm mod$. These are the analogues of the similar concepts in the context of a finite dimensional algebra, where we additionally require that the subcategory be $G$-equivariant. When $\mathcal{A}$ is a finite dimensional algebra, we show that the corresponding push-down functor $\mathcal{F}_\lambda: \mathcal{B}$-$\rm mod$ $\to \mathcal{A}$-$\rm mod$ sends $(G,\tau_\mathcal{B})$-rigid subcategories (respectively support $(G,\tau_\mathcal{B})$-tilting pairs) to $\tau_\mathcal{A}$-rigid modules (respectively support $\tau_\mathcal{A}$-tilting pairs). We further show that there is a notion of mutation for support $(G,\tau_\mathcal{B})$-tilting pairs over $\mathcal{B}$-$\rm mod$. Mutations of support $\tau_\mathcal{A}$-tilting pairs and of support $(G,\tau_\mathcal{B})$-tilting pairs commute with the push-down functor. We derive some consequences of this, and in particular, we derive a $\tau$-tilting analogue of the result of P. Gabriel that locally representation-finiteness is preserved under coverings. Finally, we prove that when the Galois group $G$ is finitely generated free, any rigid $\mathcal{A}$-module (and in particular $\tau_\mathcal{A}$-rigid $\mathcal{A}$-modules) lies in the essential image of the push-down functor.

\end{abstract}

\maketitle

\section{Introduction}
\phantomsection\label{intro}

Let $\mathbb{K}$ be an algebraically closed field. We let $\mathcal{A}$ denote a finite dimensional $\mathbb{K}$-algebra, which we assume to be connected, without no loss of generality.
Our focus is on the representation theory of $\mathcal{A}$,
specifically the module category $\mathcal{A}\mod$ of finitely generated left $\mathcal{A}$-modules. An important class of modules is the class of tilting modules, which traces back to the work of Bernstein, Gelfand and Ponomarev \cite{Bernstein1973-vt}, and later well studied in the context of representation theory of finite dimensional algebras; see \cite{Happel1982-tf}, \cite{MR0607151}, and \cite{MR0654701}. These modules serve as a generalization of progenerators from classical Morita theory and relate closely to Bernstein-Gelfand-Ponomarev (BGP) reflections in quiver representations, as discussed in \cite{Bernstein1973-vt}. Notably, BGP-reflections are a special class of tilting modules called APR-tilting modules, see \cite{MR0530043}.

One of the important properties of tilting modules
is that any almost-complete tilting module has either one or two complements, making possible a partially defined notion of mutation. To make mutation always possible, a bigger class of modules having the more regular property that almost-complete ones always have two complements is desirable. This is well accomplished by introducing support $\tau$-tilting modules or pairs, see \cite{AIR}. 

This latter property is particularly significant in the context of categorification of cluster algebras. In the realm of cluster categories, as explored in \cite{BUAN2006572}, and more generally in 2-Calabi–Yau triangulated categories, as examined in \cite{Iyama2008-fg}, there exists a class of objects known as cluster-tilting objects, which possess the desired property of ensuring the existence of two complements.

On the other hand, coverings techniques in representation theory were introduced and developed to understand many aspects of representation theory, including representation-finite algebras and their indecomposable representations. In this theory, one of the important results can be summarized as follows, see \cite{Bongartz-Gabriel}, \cite{MartPenaInven}: Let $Q$ be a locally finite quiver, $I$ an admissible ideal in the path-category $\mathbb{K}Q$ of $Q$, $\mathcal{R}$ a locally bounded quotient category $\mathbb{K}Q/I$, $\mathcal{R}\mod$ the category of finite dimensional left $\mathcal{R}$-modules (or representations of $\mathcal{R}$) and $G$ a group of $\mathbb{K}$-linear automorphism of $\mathcal{R}$ acting freely on the objects of $\mathcal{R}$. Moreover, let $\mathcal{F}:\mathcal{R} \rightarrow \mathcal{R}/G \eqqcolon \mathcal{A}$ be the functor which assigns to each object $x$ of $\mathcal{R}$ its $G$-orbit $G.x$, and $\mathcal{F}_{\lambda}:\mathcal{R}\mod  \rightarrow (\mathcal{R}/G)\mod$ the push-down functor associated with $\mathcal{F}$. Then $\mathcal{R}$ is locally representation-finite if and only if $\mathcal{R}/G$ is. In this case, $\mathcal{F}_{\lambda}$ induces a bijection between the $G$-orbits of isomorphism classes of indecomposable finite dimensional $\mathcal{R}$-modules and the isomorphism classes of indecomposable finite dimensional $\mathcal{A}$-modules.

Covering techniques have also been used to study several other properties of the finite dimensional algebras. These include the relation between the covering of the Auslander-Reiten quiver and the ordinary quiver (\cite{MartPenaInven}), conditions on the algebra so that the fundamental group of the bound quiver is a free group (\cite{Caston-de}), the representation finiteness of standard algebras (\cite{Bongartz1984}), and many more. 

Given that both $\tau$-tilting theory and covering techniques have proven valuable in the study of representation theory for finite-dimensional algebras, we aim to develop a theory of mutation in the context of Galois coverings.

Suppose $\mathcal{A}$ is given by a finite, connected bound quiver $(Q,I)$. Let $p : (\Gamma,J)\rightarrow (Q,I)$ be a Galois covering with $G$ as a group of automorphisms of $(\Gamma,J)$. We further assume that $G$ is torsion-free. Let $\mathcal{B}\coloneqq\mathbb{K}\Gamma/J$. Note that both $\mathcal{A}$ and $\mathcal{B}$ are locally bounded $\mathbb{K}$-linear categories. By \cite[Theorem, Section 3.6, page 89]{GabUnivCov}, we know that the Auslander-Reiten translate exists for $\mathcal{B}\mod$. We denote by $\tau_{\mathcal{B}}$ and $\tau_{\mathcal{A}}$ the Auslander-Reiten translates for $\mathcal{B}$ and $\mathcal{A}$, respectively.

For a locally bounded $\mathbb{K}$-linear category $\mathcal{R}$, we denote by $\mathcal{R}\MOD$ (resp., $\mathcal{R}\mod$) the category of all left $\mathcal{R}$-modules (resp., finite dimensional left $\mathcal{R}$-modules). By a subcategory $\mathcal{C}\subseteq \mathcal{R}\mod$ we mean a subcategory of $\mathcal{R}\mod$ that is full, skeletal, and closed under finite direct sums and direct summands. We will use $\mathcal{R}\ind$ to denote the subcategory of $\mathcal{R}\mod$ formed by indecomposable objects. Denote by $\mathsf{proj}\,\mathcal{R}$ the subcategory of $\mathcal{R}\mod$ formed by projective $\mathcal{R}$-modules.

Let $M\in \mathcal{B}\mod$. The group $G$ acts on $\mathcal{B}\mod$ and for $g \in G$, we denote by $M^{g}$ the $g$-translate of $M$, and by $\mathcal{O}(M)\coloneqq\lbrace M^{g}\mid g\in G\rbrace$ the $G$-orbit of $M$. Let $\mathcal{C}\subseteq\mathcal{B}\mod$ be a subcategory. We will say that $\mathcal{C}$ is \emph{$G$-equivariant} if, for every $\mathcal{B}$-module $M\in\mathcal{C}$, the module $M^g\in\mathcal{C}$ for every $g \in G$. For a $G$-equivariant subcategory $\mathcal{C}$ we will use $\lvert\mathcal{C}\rvert_{G}$ to denote the number of $G$-orbits of distinct indecomposable objects in $\mathcal{C}$. For subcategories $\mathcal{C}_1,\mathcal{C}_2\subseteq\mathcal{B}\mod$, we denote by $\mathsf{add}(\mathcal{C}_1,\mathcal{C}_2)$ the smallest (additive) subcategory of $\mathcal{B}\mod$ containing both $\mathcal{C}_1$ and $\mathcal{C}_2$, and by $\mathsf{add}_{G}(\mathcal{C}_1,\mathcal{C}_2)$ the smallest $G$-equivariant (additive) subcategory of $\mathcal{B}\mod$ containing both $\mathcal{C}_1$ and $\mathcal{C}_2$.

We define the notion of support $(G,\tau_{\mathcal{B}})$-tilting pair in the following way:

\begin{unnumdefn}[see Definition \ref{(G,tau)-tilting} and \ref{supp(G,tau)-tilting}]
    Let $(\mathcal{T},\mathcal{P})$ be a pair where $\mathcal{T}\subseteq\mathcal{B}\mod$ and $\mathcal{P}\subseteq\mathsf{proj}\,\mathcal{B}$ are  subcategories
    \begin{enumerate}
        \item We say $\mathcal{T}$ is $\emph{$\tau_{\mathcal{B}}\,$-rigid}$ if $\Hom_{\mathcal{B}}(X,\tau_{\mathcal{B}}Y)=0$, for every $X,Y\in\mathcal{T}$.
        \item The pair $(\mathcal{T},\mathcal{P})$ is said to be a $\emph{$(G,\tau_{\mathcal{B}})$-rigid pair}$ if the following conditions are satisfied:
        \begin{enumerate}
            \item $\mathcal{T}$ is $\tau_{\mathcal{B}}\,$-rigid
            \item Both $\mathcal{T}$ and $\mathcal{P}$ are $G$-equivariant
            \item $\Hom_{\mathcal{B}}(\mathcal{P},\mathcal{T})=0$
        \end{enumerate}
        \item The pair $(\mathcal{T},\mathcal{P})$ is said to be a $\emph{support $(G,\tau_{\mathcal{B}})$-tilting pair}$ (respectively, $\emph{almost complete support}$ $\emph{$(G,\tau_{\mathcal{B}})$-tilting pair}$) if $(\mathcal{T},\mathcal{P})$ is a $(G,\tau_{\mathcal{B}})$-rigid pair and $\lvert \mathcal{T}\rvert_{G}+\lvert \mathcal{P}\rvert_{G}=\lvert \mathcal{A}\rvert$ (resp., $\lvert \mathcal{T}\rvert_{G}+\lvert \mathcal{P}\rvert_{G}=\lvert \mathcal{A}\rvert-1$).
    \end{enumerate}
\end{unnumdefn}

In contrast to the classical mutation process, where we usually mutate a single indecomposable direct summand of a support \(\tau_{\mathcal{A}}\)-tilting pair, we will mutate an entire \(G\)-orbit of an indecomposable object within a support \((G, \tau_{\mathcal{B}})\)-tilting pair. More precisely, we make the following definition.

\begin{unnumdefn}[see Definition \ref{equivsubcatpair} and \ref{(G,tau)mut}]
    Let $(\mathcal{T}_1,\mathcal{P}_1),(\mathcal{T}_2,\mathcal{P}_2)$ be two non-equivalent support $(G,\tau_{\mathcal{B}})$-tilting pairs. They are said to be $\emph{$(G,\tau_{\mathcal{B}})$-mutations}$ of each other if there exists an almost complete support $(G,\tau_{\mathcal{B}})$-tilting pair $(\mathcal{T},\mathcal{Q})$ such that $(\mathcal{T},\mathcal{Q})\prec (\mathcal{T}_1,\mathcal{P}_1)$ and $(\mathcal{T},\mathcal{Q})\prec (\mathcal{T}_2,\mathcal{P}_2)$.
    
    If $X\in\mathcal{B}\ind$ satisfies either $\mathcal{T}_1=\mathsf{add}_{G}(\mathcal{T},X)$ and $\mathcal{P}_1 = \mathcal{Q}$; or $\mathcal{T}_1 = \mathcal{T}$ and $\mathcal{P}_1=\mathsf{add}_{G}(\mathcal{Q},X)$, then we write $\mu_{\mathcal{O}(X)}(\mathcal{T}_1)=\mathcal{T}_2$ to denote the mutation of $(\mathcal{T}_1,\mathcal{P}_1)$ at the $G$-orbit $\mathcal{O}(X)$.
\end{unnumdefn}

Let $s(G,\tau_{\mathcal{B}})\tilt$ denote the set of all support $(G,\tau_{\mathcal{B}})$-tilting pairs (up to equivalence), and $s\tau_{\mathcal{A}}\tilt$ denote the set of all basic support $\tau_{\mathcal{A}}$-tilting pairs (up to isomorphism). We note that these sets have the natural structure of a poset. To every support $(G, \tau_\mathcal{B})$-rigid pair $(\mathcal{T}, \mathcal{P})$, we can associate a torsion class ${\rm Fac}(\mathcal{T})$ and torsion classes over $\mathcal{B} \mod$ form a poset by inclusion. We have a similar observation for support $\tau_\mathcal{A}$-tilting modules in $\mathcal{A} \mod$. We will interpret an element of $s\tau_{\mathcal{A}}\tilt$ as a pair of subcategories of $\mathcal{A}\mod$, see Remark \ref{pair of subcategories instead of pair of mod}.

Using the above terminologies, we get the following results. Our first main result tells us that the push-down functor is compatible with both the mutation of $G$-orbits and mutation of (support) $\tau_{\mathcal{A}}$-tilting modules. 

\begin{thmx}
    We have the following commutative diagram
    \begin{equation*}
        \begin{tikzcd}
            s(G,\tau_{\mathcal{B}})\tilt \arrow[r,"{\mathcal{F}_{\lambda}}"] \arrow[d,"{\mu}"] & s\tau_{\mathcal{A}}\tilt \arrow[d,"{\mu}"] \\
            s(G,\tau_{\mathcal{B}})\tilt \arrow[r,"{\mathcal{F}_{\lambda}}"] & s\tau_{\mathcal{A}}\tilt
        \end{tikzcd}
    \end{equation*}
\end{thmx}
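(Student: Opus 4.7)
The plan is to reduce the commutativity of the diagram to the classical uniqueness-of-completion theorem of Adachi-Iyama-Reiten, which asserts that any almost complete support $\tau$-tilting pair admits exactly two completions to a support $\tau$-tilting pair. Suppose $(\mathcal{T}_1, \mathcal{P}_1)$ and $(\mathcal{T}_2, \mathcal{P}_2)$ are $(G,\tau_\mathcal{B})$-mutations of each other, witnessed by an almost complete support $(G, \tau_\mathcal{B})$-tilting pair $(\mathcal{T}, \mathcal{Q})$. I would then show that the push-downs $\mathcal{F}_\lambda(\mathcal{T}_1, \mathcal{P}_1)$ and $\mathcal{F}_\lambda(\mathcal{T}_2, \mathcal{P}_2)$ are distinct support $\tau_\mathcal{A}$-tilting pairs both lying above the almost complete pair $\mathcal{F}_\lambda(\mathcal{T}, \mathcal{Q})$ in the poset $\prec$; by the Adachi-Iyama-Reiten dichotomy, they must then be $\tau_\mathcal{A}$-mutations of each other.

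The first step is a counting lemma: $\mathcal{F}_\lambda(\mathcal{T}, \mathcal{Q})$ is an almost complete support $\tau_\mathcal{A}$-tilting pair. The push-down of any $(G,\tau_\mathcal{B})$-rigid pair is $\tau_\mathcal{A}$-rigid by the result preceding this theorem in the paper. Because $G$ is torsion-free and acts freely on the objects of $\mathcal{B}$, the functor $\mathcal{F}_\lambda$ induces a bijection between $G$-orbits of indecomposable objects appearing in a $G$-equivariant subcategory and the indecomposable summands of the image, so
\[
  |\mathcal{F}_\lambda(\mathcal{T})| + |\mathcal{F}_\lambda(\mathcal{Q})| \;=\; |\mathcal{T}|_G + |\mathcal{Q}|_G \;=\; |\mathcal{A}| - 1.
\]
The same orbit-counting argument applied to each $(\mathcal{T}_i, \mathcal{P}_i)$ shows that its image has exactly $|\mathcal{A}|$ indecomposable summands, hence is a full support $\tau_\mathcal{A}$-tilting pair lying above $\mathcal{F}_\lambda(\mathcal{T}, \mathcal{Q})$.

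The main obstacle is \emph{distinctness}: one must confirm that $\mathcal{F}_\lambda(\mathcal{T}_1, \mathcal{P}_1) \neq \mathcal{F}_\lambda(\mathcal{T}_2, \mathcal{P}_2)$. By the definition of $(G,\tau_\mathcal{B})$-mutation, there exist indecomposables $X_1, X_2 \in \mathcal{B}\ind$ with $\mathcal{O}(X_1) \neq \mathcal{O}(X_2)$ such that $(\mathcal{T}_i, \mathcal{P}_i)$ is obtained from $(\mathcal{T}, \mathcal{Q})$ by adjoining $\mathcal{O}(X_i)$ to either the tilting or the projective part. Since the push-down separates distinct $G$-orbits of indecomposables, the summands $\mathcal{F}_\lambda(X_1)$ and $\mathcal{F}_\lambda(X_2)$ are non-isomorphic, so the pairs $\mathcal{F}_\lambda(\mathcal{T}_i, \mathcal{P}_i)$ are indeed distinct. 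Combined with AIR uniqueness, this yields
\[
  \mu_{\mathcal{F}_\lambda(X_1)}\bigl(\mathcal{F}_\lambda(\mathcal{T}_1, \mathcal{P}_1)\bigr) \;=\; \mathcal{F}_\lambda(\mathcal{T}_2, \mathcal{P}_2),
\]
which is exactly the commutativity of the diagram. The delicate point in this last step is to verify that the $G$-orbit separation is genuinely preserved after passing to the almost complete pair $(\mathcal{T},\mathcal{Q})$: one must rule out the possibility that the single missing orbit on the $\mathcal{B}$-side is merged with an existing indecomposable in the image, which amounts to checking that $\mathcal{F}_\lambda(X_i)$ does not already appear as a summand of $\mathcal{F}_\lambda(\mathcal{T}) \oplus \mathcal{F}_\lambda(\mathcal{Q})$. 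This, in turn, follows from the $G$-equivariance hypothesis on $(\mathcal{T},\mathcal{Q})$ and the bijection between $G$-orbits and image summands established in the counting lemma.
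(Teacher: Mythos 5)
Your route is genuinely different from the paper's, and the part of the statement it addresses is essentially argued correctly: you reduce commutativity to the Adachi--Iyama--Reiten theorem that a basic almost complete support $\tau_{\mathcal{A}}$-tilting pair has exactly two completions, after checking that $\mathcal{F}_{\lambda}$ sends $(\mathcal{T},\mathcal{Q})$ and its two completions to pairs of the right size (the counting uses orbit-injectivity of $\mathcal{F}_{\lambda}$; note that the rigidity of the image pair also requires $\Hom_{\mathcal{A}}(\mathcal{F}_{\lambda}\mathcal{P},\mathcal{F}_{\lambda}\mathcal{T})=0$, which needs the same adjunction computation as Proposition \ref{NoHom} and is not literally covered by the corollary you cite). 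The paper argues constructively instead: left mutations are computed by minimal left $\mathsf{add}_{G}(\widehat{U})$-approximation sequences, which push down to minimal left $\mathsf{add}(U)$-approximations (Proposition \ref{leftapprx}, Corollary \ref{minleftapprox}, Proposition \ref{LeftMutInImg}, Corollary \ref{LeftG-Mut}), and right mutations are treated through the $(-)^{\dagger}$/transpose duality, which also commutes with the push-down (Lemmas \ref{comPD&*} and \ref{ComTr&PD}, Proposition \ref{rightMutInImg}). Your argument is shorter and avoids all of that machinery, but it proves strictly less: it presupposes that both $(\mathcal{T}_1,\mathcal{P}_1)$ and $(\mathcal{T}_2,\mathcal{P}_2)$ already exist over $\mathcal{B}$, so it yields commutativity only where the $(G,\tau_{\mathcal{B}})$-mutation is known to exist, and it never shows that the $\tau_{\mathcal{A}}$-mutation of a pushed-down pair lifts back to $\mathcal{B}$. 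That lifting is the real content of the paper's proof: it makes the left-hand vertical map total, gives the existence half of the corollary following Theorem \ref{mainresult}, and is exactly what Proposition \ref{Connected component for tau-tilting} (and hence the $\tau$-tilting-finiteness transfer) uses later. Your approach buys economy; the paper's buys the lifting statement that the applications need.

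There is also a concrete slip in your distinctness step. It is not true in general that the two completions adjoin distinct orbits $\mathcal{O}(X_1)\neq\mathcal{O}(X_2)$: a projective orbit can migrate between the module part and the projective part, i.e. $\mathcal{T}_1=\mathsf{add}_G(\mathcal{T},X)$, $\mathcal{P}_1=\mathcal{Q}$ while $\mathcal{T}_2=\mathcal{T}$, $\mathcal{P}_2=\mathsf{add}_G(\mathcal{Q},X)$ with the same orbit $\mathcal{O}(X)$ --- the analogue of the pairs $(\mathcal{A},0)$ and $(0,\mathcal{A})$ for a local algebra. In that case $\mathcal{F}_{\lambda}X_1\cong\mathcal{F}_{\lambda}X_2$ and the ``non-isomorphic added summands'' argument collapses. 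The conclusion survives, but distinctness must be argued at the level of pairs: $\mathcal{F}_{\lambda}X\notin\mathsf{add}(\mathcal{F}_{\lambda}\mathcal{T})$ by orbit-injectivity, so the module parts of the two images already differ, and Krull--Remak--Schmidt then gives $\mathcal{F}_{\lambda}(\mathcal{T}_1,\mathcal{P}_1)\not\cong\mathcal{F}_{\lambda}(\mathcal{T}_2,\mathcal{P}_2)$ in every case. With that patch, and with the well-definedness of the horizontal maps spelled out as above, your argument is a valid alternative proof of the compatibility statement, conditional on the existence of the mutation upstairs.
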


The above theorem yields a similar result to the result of P. Gabriel, see \cite{GabUnivCov} Lemma 3.3, Theorem 3.6, also see \cite{MARTINEZVILLA1983277}, in terms of $\tau_{\mathcal{A}}$-tilting theory, as follows.

\begin{thmx}
    The Galois covering $\mathcal{B}$ is locally $(G,\tau_{\mathcal{B}})$-tilting finite (see Definition \ref{loc(G,tau)tiltingfin}) if and only if $\mathcal{A}$ is $\tau_{\mathcal{A}}$-tilting finite.
\end{thmx}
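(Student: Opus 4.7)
The plan is to leverage Theorem A together with standard facts from $\tau$-tilting theory to transfer mutation-class finiteness between the two settings. The starting point is the observation that $(\mathsf{proj}\,\mathcal{B}, 0)$ is a support $(G,\tau_\mathcal{B})$-tilting pair whose image under $\mathcal{F}_\lambda$ is $(\mathcal{A},0)$, the greatest element of $s\tau_{\mathcal{A}}\tilt$.

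For the direction $(\Rightarrow)$, I would assume $\mathcal{B}$ is locally $(G,\tau_\mathcal{B})$-tilting finite, so the $(G,\tau_\mathcal{B})$-mutation class of $(\mathsf{proj}\,\mathcal{B},0)$ is finite. Applying $\mathcal{F}_\lambda$ yields a finite subset $S \subseteq s\tau_{\mathcal{A}}\tilt$ containing $(\mathcal{A},0)$. By Theorem A, each $\tau_\mathcal{A}$-mutation of a pair in $S$ is itself the image of a $(G,\tau_\mathcal{B})$-mutation of the corresponding lift, hence lies again in $S$; that is, $S$ is stable under all $\tau_\mathcal{A}$-mutations. Since the exchange graph of $s\tau_{\mathcal{A}}\tilt$ is connected by the theorem of Adachi-Iyama-Reiten, one forces $S = s\tau_{\mathcal{A}}\tilt$, whence $\mathcal{A}$ is $\tau_\mathcal{A}$-tilting finite.

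For the direction $(\Leftarrow)$, I would assume $\mathcal{A}$ is $\tau_\mathcal{A}$-tilting finite and establish that the map $\mathcal{F}_\lambda : s(G,\tau_{\mathcal{B}})\tilt \to s\tau_{\mathcal{A}}\tilt$ is injective up to equivalence. Granted this, the $(G,\tau_\mathcal{B})$-mutation class of any support $(G,\tau_\mathcal{B})$-tilting pair injects into the finite set $s\tau_{\mathcal{A}}\tilt$, which forces local $(G,\tau_\mathcal{B})$-tilting finiteness on the $\mathcal{B}$-side. The injectivity itself reduces to a statement about $G$-orbits of indecomposables: a $G$-equivariant subcategory is determined by the $G$-orbits of its indecomposable summands, and the equality $\lvert\mathcal{T}\rvert_G + \lvert\mathcal{P}\rvert_G = \lvert\mathcal{A}\rvert$ matches exactly the number of indecomposable summands of the image.

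The main obstacle is precisely this injectivity claim. Gabriel's covering theorem from \cite{GabUnivCov} supplies an analogous bijection in the locally representation-finite case, but a priori we do not have representation finiteness at our disposal in $(\Leftarrow)$. I would proceed inductively along mutation sequences starting at $(\mathsf{proj}\,\mathcal{B},0)$: the base case is the standard fact that $\mathcal{F}_\lambda$ induces a bijection between $G$-orbits of indecomposable projective $\mathcal{B}$-modules and indecomposable projective $\mathcal{A}$-modules, and at each mutation step the $G$-orbit of the newly appearing indecomposable is determined unambiguously by its push-down together with the rest of the pair, because the minimal left/right approximation sequences defining mutation commute with $\mathcal{F}_\lambda$ (this compatibility is what underlies Theorem A), and $G$ acts freely on the objects of $\mathcal{B}$. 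Pinning down this last uniqueness carefully, in particular controlling the ambiguity among $G$-translates of a candidate lift, is the delicate point; it is most naturally addressed using the exactness and the $G$-equivariance properties of $\mathcal{F}_\lambda$ that are presumably developed in the body of the paper.
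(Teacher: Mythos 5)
Two steps in your proposal do not hold up. In the direction where you assume $\mathcal{B}$ is locally $(G,\tau_{\mathcal{B}})$-tilting finite, you close the argument by invoking connectedness of $Q(s\tau_{\mathcal{A}}\tilt)$ ``by the theorem of Adachi--Iyama--Reiten''. That is not a theorem of \cite{AIR} and cannot be assumed in general; the paper itself treats connectedness of $Q(s\tau_{\mathcal{A}}\tilt)$ as an extra hypothesis in the corollary following Proposition \ref{Connected component for tau-tilting}. What is true, and what you actually need, is the AIR result that a \emph{finite} connected component of the exchange quiver must already be the whole quiver. With that substitution your argument (the image $S$ of the finite mutation class of $(\mathsf{proj}\,\mathcal{B},0)$ is mutation-stable, hence contains the component of $(\mathcal{A},0)$, which is therefore finite) does go through, and it is essentially the contrapositive of the paper's own argument for this direction: there, $\tau_{\mathcal{A}}$-tilting infiniteness gives infinitely many $\tau_{\mathcal{A}}$-rigid modules of the first kind in the component of $(\mathcal{A},0)$ via Proposition \ref{Connected component for tau-tilting}, and their $(G,\tau_{\mathcal{B}})$-rigid lifts contradict local finiteness after a pigeonhole over the finitely many vertices of $Q$. (You should also say a word on why local finiteness bounds the number of $G$-orbits of indecomposable $(G,\tau_{\mathcal{B}})$-rigid modules, namely that every indecomposable can be $G$-translated so as to be supported over a fixed finite fundamental domain.)

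In the converse direction the gap is more serious. Local $(G,\tau_{\mathcal{B}})$-tilting finiteness is a statement about $(G,\tau_{\mathcal{B}})$-rigid \emph{modules} supported at each vertex (Definition \ref{loc(G,tau)tiltingfin}), not about support $(G,\tau_{\mathcal{B}})$-tilting pairs. Even granting that $\F_{\lambda}$ embeds $s(G,\tau_{\mathcal{B}})\tilt$ into the finite set $s\tau_{\mathcal{A}}\tilt$, this only bounds the pairs; to conclude local finiteness you would need an equivariant Bongartz-type completion (every indecomposable $(G,\tau_{\mathcal{B}})$-rigid module is a summand of some support $(G,\tau_{\mathcal{B}})$-tilting pair), which neither you nor the paper establishes, so this reduction fails as stated. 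Moreover, the ``main obstacle'' you isolate, namely injectivity of $\F_{\lambda}$ and uniqueness of lifts along mutation sequences, is not an obstacle at all: since $G$ is torsion-free, the push-down preserves indecomposability and two indecomposable $\mathcal{B}$-modules with isomorphic push-downs are $G$-translates of one another; these are among the standard covering-theoretic facts recorded in the preliminaries and require no induction along mutations and no representation-finiteness. The paper's proof of this direction is correspondingly direct: an indecomposable $(G,\tau_{\mathcal{B}})$-rigid module pushes down to an indecomposable $\tau_{\mathcal{A}}$-rigid module by the corollary to Proposition \ref{NoHom}, injectivity on $G$-orbits then yields finitely many orbits of such modules, and each vertex of $\Gamma$ supports only finitely many members of any given orbit. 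Replacing your pair-based reduction by this module-level argument is what repairs the converse.
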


Another line of research has been to understand the orbit category, for example, see \cite{AmiOpper}, and the image of the push-down functor, see \cite{EdGreen}, \cite{GreenVillaYoshino}. It has also been used in the classification of indecomposable modules over interesting classes of tame algebras, see \cite{Dowbor1987}. Moreover, a result of W.~Crawley-Boevey states that if $\mathrm{char}(\mathbb{K})=0$ and $G=\mathbb{Z}$, then any indecomposable $\mathcal{A}$-module $M$ with $\Ext^{1}_{\mathcal{A}}(M,M)=0$ is in the essential image of the push-down functor $\F_{\lambda}$.

We call an $\mathcal{A}$-module $V$ a \emph{module of first kind with respect to $\F_{\lambda}$} if and only if for every direct summand $V'$ of $V$ there exists a $W'\in \mathcal{B}\MOD$ such that $\F_{\lambda} W'=V'$. We denote by $\mathcal{A}\mod_1$ the full subcategory of $\mathcal{A}\mod$ of modules of first kind with respect to $\F_{\lambda}$. Below, we let $Q(s\tau_{\mathcal{A}}\tilt)$ denote the Hasse quiver of $s\tau_{\mathcal{A}}\tilt$, where arrows represent covering relations, equivalently mutations, of the poset $s\tau_{\mathcal{A}}\tilt$. We show that

\begin{propx}
    If $(T_1,P_1),(T_2,P_2)\in s\tau_{\mathcal{A}}\tilt$ are in the same connected component of $Q(s\tau_{\mathcal{A}}\tilt)$, then
    \[
        (T_1,P_1)\in \mathcal{A}\mod_1 \;\text{if and only if}\;\,(T_2,P_2)\in \mathcal{A}\mod_1
    \]
    In particular, if $(T,P)\in s\tau_{\mathcal{A}}\tilt$ is such that it belongs to the connected component of $Q(s\tau_{\mathcal{A}}\tilt)$ with $(\mathcal{A},0)$ or $(0,\mathcal{A})$ as one of its vertices, then $(T,P)\in \mathcal{A}\mod_1$
\end{propx}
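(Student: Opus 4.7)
The plan is to reduce the equivalence to a single mutation step, then invoke the commutative diagram of the first theorem above to transport $\mathcal{A}\mod_1$-membership across one mutation. Since vertices in the same connected component of $Q(s\tau_{\mathcal{A}}\tilt)$ are joined by an undirected sequence of mutations, it suffices to prove the single-step implication: if $(T_1,P_1)$ and $(T_2,P_2)$ differ by a mutation at an indecomposable $X$ and $(T_1,P_1)\in\mathcal{A}\mod_1$, then $(T_2,P_2)\in\mathcal{A}\mod_1$; the converse then follows by symmetry of mutation.

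The key intermediate claim I will establish is the characterization: $(T,P)\in\mathcal{A}\mod_1$ if and only if there exists a support $(G,\tau_{\mathcal{B}})$-tilting pair $(\mathcal{T},\mathcal{P})$ with $\mathcal{F}_{\lambda}(\mathcal{T},\mathcal{P})=(T,P)$. The ``if'' direction is immediate, since $\mathcal{F}_{\lambda}$ sends each indecomposable of $(\mathcal{T},\mathcal{P})$ to an indecomposable summand of $T\oplus P$. For the ``only if'' direction, I lift each indecomposable summand $T_i$ of $T$ to an indecomposable $\tilde{T}_i\in\mathcal{B}\ind$ with $\mathcal{F}_{\lambda}\tilde{T}_i\cong T_i$, and each indecomposable summand $P_j$ of $P$ to $\tilde{P}_j\in\mathsf{proj}\,\mathcal{B}$; such finite-dimensional indecomposable lifts exist because $\mathcal{F}_{\lambda}$ does not annihilate nonzero finite-dimensional modules, combined with Krull--Schmidt. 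Setting $\mathcal{T}:=\mathsf{add}_G\{\tilde{T}_i\}$ and $\mathcal{P}:=\mathsf{add}_G\{\tilde{P}_j\}$, the $\tau_{\mathcal{B}}$-rigidity of $\mathcal{T}$ and the vanishing $\Hom_{\mathcal{B}}(\mathcal{P},\mathcal{T})=0$ follow from the standard Hom-decomposition
\[
\Hom_{\mathcal{A}}(\mathcal{F}_{\lambda}M,\mathcal{F}_{\lambda}N)\;\cong\;\bigoplus_{g\in G}\Hom_{\mathcal{B}}(M,N^g),
\]
together with the intertwining $\tau_{\mathcal{A}}\mathcal{F}_{\lambda}\cong\mathcal{F}_{\lambda}\tau_{\mathcal{B}}$ and the corresponding properties of $(T,P)$. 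The count $|\mathcal{T}|_G+|\mathcal{P}|_G=|T|+|P|=|\mathcal{A}|$ holds because distinct summands of $T\oplus P$ have lifts in distinct $G$-orbits.

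Granted this characterization, the single-step implication is immediate: write $(T_1,P_1)=\mathcal{F}_{\lambda}(\mathcal{T}_1,\mathcal{P}_1)$, pick an indecomposable lift $\tilde{X}$ of $X$ in $(\mathcal{T}_1,\mathcal{P}_1)$, and set $(\mathcal{T}_2,\mathcal{P}_2):=\mu_{\mathcal{O}(\tilde{X})}(\mathcal{T}_1,\mathcal{P}_1)$. The commutative diagram of the first theorem yields
\[
\mathcal{F}_{\lambda}(\mathcal{T}_2,\mathcal{P}_2)\;=\;\mu_X\bigl(\mathcal{F}_{\lambda}(\mathcal{T}_1,\mathcal{P}_1)\bigr)\;=\;\mu_X(T_1,P_1)\;=\;(T_2,P_2),
\]
whence $(T_2,P_2)\in\mathcal{A}\mod_1$ by the characterization. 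The ``in particular'' clause then holds because each indecomposable projective of $\mathcal{A}$ is the push-down of an indecomposable projective of $\mathcal{B}$, so both $(\mathcal{A},0)$ and $(0,\mathcal{A})$ belong to $\mathcal{A}\mod_1$.

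The main obstacle is the characterization step, specifically verifying that indecomposable lifts of the summands of $(T,P)$ can always be assembled into a bona fide support $(G,\tau_{\mathcal{B}})$-tilting pair. Deducing $\tau_{\mathcal{B}}$-rigidity from $\tau_{\mathcal{A}}$-rigidity via the Hom-decomposition requires simultaneous vanishing of all twisted Hom-spaces $\Hom_{\mathcal{B}}(\tilde{T}_i,(\tau_{\mathcal{B}}\tilde{T}_k)^g)$ for $g\in G$, and the orbit count must be verified carefully when orbits of distinct chosen lifts could a priori coincide.
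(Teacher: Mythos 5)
Your proposal is correct and follows essentially the same route as the paper: the paper's own proof is a one-line appeal to the commutative diagram of Theorem~A together with the fact that the indecomposable projective $\mathcal{A}$-modules are push-downs, and the machinery behind that diagram (lifting the summands, checking via the Hom-decomposition of Proposition~\ref{NoHom} that the lifts form a support $(G,\tau_{\mathcal{B}})$-tilting pair, and the left/right mutation results) is exactly what you spell out in your ``characterization'' step. Your extra care with the orbit count and the simultaneous vanishing of the twisted Hom-spaces is precisely the content the paper leaves implicit in its Corollary~\ref{LeftG-Mut} and Proposition~\ref{rightMutInImg}, so no changes are needed.
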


The above proposition can be seen as a generalisation of \cite[Proposition 3.1]{LEMEUR}. As a consequence of the above proposition we see that if $Q(s\tau_{\mathcal{A}}\tilt)$ is connected, then every $\tau_{\mathcal{A}}$-rigid $\mathcal{A}$-module belongs to $\mathcal{A}\mod_1$. In particular, if $\mathcal{A}$ is a hereditary algebra or if $\mathcal{A}$ is a $\tau_{\mathcal{A}}$-tilting finite algebra, then every $\tau_{\mathcal{A}}$-rigid $\mathcal{A}$-module lies in $\mathcal{A}\mod_1$.

Recall that a $\tau_{\mathcal{A}}$-rigid $\mathcal
A$-module $M$ has an open orbit under the natural $\GL(\dim M,\mathbb{K})$-action in the variety of $(\dim M)$-dimensional $\mathcal{A}$-modules. Hence, we can ask the following question: 

\begin{ques}
\phantomsection\label{isinimg?}
    Let $\mathcal{A}$ be a finite dimensional, connected, and basic $\mathbb{K}$-algebra. If $M\in\mathcal{
    A}\mod$ is a module such that $M$ has an open orbit under the natural $\GL(\dim M,\mathbb{K})$-action in the variety of $(\dim M)$-dimensional $\mathcal{A}$-modules, then is $M\in\mathcal{A}\mod_1$ with respect to $\F_{\lambda}$?
\end{ques}

As a consequence of a result of E. L. Green, see \cite[Theorem 3.2]{EdGreen} or \cite[Theorem 4]{GreenVillaYoshino}, we get that the modules in $\mathcal{A}\mod_1$ and $G$-gradable $\mathcal{A}$-modules are closely related. Also, as seen in \cite[Theorem 3.1 and Theorem 5.4]{AmiOpper}, when $\mathcal{A}$ is $\mathbb{Z}$-graded, then $\mathbb{Z}$-gradable $\mathcal{A}$-modules play an important role.
This leads to the next main result of this article.

\begin{thmx}
\phantomsection\label{thmD}
    Assume $G$ is a finitely-generated free group. If $M\in \mathcal{A}\mod$ is a rigid $\mathcal{A}$-module, then $M\in \mathcal{A}\mod_1$, and hence, is $G$-gradable.
\end{thmx}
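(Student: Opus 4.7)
The plan is to extend Crawley--Boevey's theorem (recalled in the excerpt for $G=\mathbb{Z}$) to finitely generated free $G$ by induction on the rank, combining the open-orbit property of rigid modules with the mutation-compatibility of the push-down from Theorem A and the connectedness criterion of the preceding Proposition.

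Since $\mathcal{A}\mod_1$ is closed under direct summands (immediate from its definition), I would first reduce to an indecomposable rigid $M$. By the observation recalled in the excerpt just before Question \ref{isinimg?}, such an $M$ has open orbit $\GL(\dim M,\mathbb{K})\cdot M$ in the variety of $(\dim M)$-dimensional $\mathcal{A}$-modules, and by E.L. Green's theorem cited immediately before Theorem D, membership in $\mathcal{A}\mod_1$ is equivalent to $G$-gradability. The task therefore reduces to exhibiting a $G$-grading on some point of the orbit of $M$, at which point both conclusions of Theorem D are immediate.

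My strategy would be induction on the rank $n$ of $G=F_n$. The base case $n=1$, where $G=\mathbb{Z}$, is exactly Crawley--Boevey's theorem. For the inductive step, I would fix a surjection $\varphi:F_n\twoheadrightarrow F_{n-1}$ and factor the Galois covering as $\mathcal{B}\to\mathcal{B}'\to\mathcal{A}$, where $\mathcal{B}'\to\mathcal{A}$ is the Galois cover whose deck group is the rank-$(n-1)$ quotient. Applying the inductive hypothesis to $\mathcal{B}'\to\mathcal{A}$ produces a rigid $\mathcal{B}'$-module $M'$ that pushes down to $M$, and it remains to lift $M'$ further to some $\widetilde M\in\mathcal{B}\mod$ pushing down to $M'$. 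Here Theorem A combined with the preceding Proposition provides the mechanism to transport mutation-based connectivity in $s\tau_{\mathcal{A}}\tilt$ upstairs to $s(G,\tau_{\mathcal{B}})\tilt$, identifying $\widetilde M$ as part of a support $(G,\tau_{\mathcal{B}})$-tilting pair reachable from $(\mathcal{B},0)$ by finitely many $G$-orbit mutations.

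The main obstacle is precisely this second lift: the kernel $\ker\varphi$ of $F_n\twoheadrightarrow F_{n-1}$ is a free group of infinite rank, so the induction hypothesis on the Galois group does not apply directly to the covering $\mathcal{B}\to\mathcal{B}'$. The resolution should exploit that finitely generated free groups have cohomological dimension one, so all obstruction classes in $H^2(F_n,-)$ that would otherwise block the global assembly of a fully $G$-equivariant lift $\widetilde M$ out of the $F_{n-1}$-equivariant datum $M'$ must vanish; this is precisely where the free-group hypothesis is essential rather than merely convenient. Once $\widetilde M$ is assembled, $\mathcal{F}_{\lambda}(\widetilde M)=M$ establishes $M\in\mathcal{A}\mod_1$, and the $G$-gradability of $M$ follows directly from Green's theorem.
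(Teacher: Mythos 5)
Your reduction to Green's theorem and your base case ($G=\mathbb{Z}$, Crawley--Boevey, which is Proposition \ref{Zimg} in the paper) are fine, and your first inductive move (push the problem to an intermediate Galois cover $\mathcal{B}'\to\mathcal{A}$ with smaller deck group) is in the same spirit as what the paper does. But the step you yourself flag as the main obstacle is a genuine gap, and your proposed fix does not close it. The covering $\mathcal{B}\to\mathcal{B}'$ has Galois group $\ker\varphi$, a free group of infinite rank, so no inductive hypothesis applies; and the appeal to $\mathrm{cd}(F_n)=1$ and vanishing of $H^2(F_n,-)$ is not an argument: there is no mechanism offered (and none standard) that converts vanishing of group cohomology into the existence of a lift of $M'$ along the push-down of an infinite Galois covering, i.e.\ into a $\ker\varphi$-grading of $M'$. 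Worse, your proposed resolution makes no use of rigidity at that step, whereas without some extra hypothesis modules of the second kind do exist, so a purely formal/obstruction-theoretic lifting claim cannot be correct. The invocation of Theorem A and the mutation-connectivity proposition is also a red herring here: a rigid module need not be $\tau_{\mathcal{A}}$-rigid, connectivity of $Q(s\tau_{\mathcal{A}}\tilt)$ is not available in general, and the paper's proof of Theorem D uses no mutation theory at all.

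The paper's actual route avoids ever lifting through an infinite-rank kernel by group-theoretic means. It builds a chain of normal subgroups $G=G_0\supset G_1\supset G_2\supset\cdots$ (with explicit free generating sets $S_i$) such that each quotient $G_{i-1}/G_i$ is infinite cyclic, giving a tower of intermediate orbit categories $\mathcal{A}_i=\mathcal{B}/G_i$ in which each step $\mathcal{A}_{i+1}\to\mathcal{A}_i$ is a Galois $\mathbb{Z}$-covering. Rigidity is preserved at each lift because the push-down is exact and sends non-split self-extensions to non-split ones (Corollary \ref{non-radtonon-rad}), so Proposition \ref{Zimg} applies repeatedly to produce rigid $M_i\in\mathcal{A}_i\mod$ with $\mathcal{F}^{\,i}_\lambda M_i\cong M$, all of the same dimension $r=\dim M$. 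The decisive idea you are missing is then a finiteness-of-support argument rather than a cohomological one: the fundamental domains $F_i$ of $\mathcal{A}_i$ in $\Gamma$ are nested and exhaust $\Gamma$ (Lemma \ref{Lemma: pushing a_i} and the exhaustion lemma), so for some finite stage $i_r$ the domain $F_{i_r}$ contains all endpoints of walks of length at most $r$ starting at a lift $\widehat{x}$ of a supporting vertex of $M$; then $M_{i_r}$ lifts to $\mathcal{B}$ \emph{by hand}, by placing its values on $F_{i_r}$ and zero elsewhere (Lemma \ref{lem: encompassing endpoints}), and composing push-downs gives $\mathcal{F}_\lambda N\cong M$. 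In short: the free-group hypothesis enters through the Nielsen--Schreier construction of the tower of $\mathbb{Z}$-coverings and the growth of fundamental domains, not through vanishing of $H^2$.
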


\section{Preliminaries}
\phantomsection

Let $\mathbb{K}$ be an algebraically closed field, and let $Q$ be a quiver (not necessarily finite). We will denote by $Q_0$ the set of its $\emph{vertices}$, and by $Q_1$ the set of its $\emph{arrows}$. We have two functions $h,t:Q_1 \rightarrow Q_0$ defined in the following way: let $a \in Q_1$ be an arrow, then $h(a)$ and $t(a)$ are called the $\emph{head}$ and the $\emph{tail}$ of $a$, respectively. If $Q_0$ and $Q_1$ are finite sets, then we say $Q$ is a $\emph{finite quiver}$.

A $\emph{path}$ $p = a_\ell a _{\ell-1} \dotsb a _1$ in $Q$ is defined as a sequence  of arrows in $Q _1$ such that $ta_{i+1} = ha_ i$ for $i = 1,2,\dots,\ell - 1$. We call the integer $\ell$ the \emph{length} of $p$ and define $h(p) = hp = ha_\ell$ and $t(p) = tp = ta_ 1$. For every $x \in Q_ 0$, we introduce a $\emph{trivial path}$ $e_x$ of length 0, which satisifies $h(e_x) = t(e_x) = x$. The $\emph{path category}$ $\mathbb{K}Q$ is defined as follows: its object set is $Q_0$, and, given $x, y \in Q_0$, the morphism set $\mathbb{K}Q(x, y)$ is the $\mathbb{K}$-vector space having as basis the set of paths from $x$ to $y$. The composition $w_1 \circ w_2$ of two paths $w_1 \in \mathbb{K}Q(y,z)$ and $w_2 \in \mathbb{K}Q(x,y)$ is given by the concatenation of paths: $w_1 \circ w_2 = w_1w_2$. Composition of morphisms is induced from this using bilinearity.

Let $I \subseteq \mathbb{K}Q$ be a two sided ideal. We can consider $\mathcal{A}\coloneqq\mathbb{K}Q/I$ as a $\mathbb{K}$-linear category with object class $Q_0$ and the set of morphisms from $x$ to $y$ is the quotient space of $\mathbb{K}Q(x,y)$ by all paths in $I(x,y)$, which we denote by $\mathcal{A}(x,y)$. The $\mathbb{K}$-linear category $\mathcal{A}$ is said to be $\emph{locally bounded}$ if the following three conditions are satisfied:
\begin{enumerate}
    \item for each object $x\in \mathcal{A}$, the endomorphism algebra $\mathcal{A}(x,x)$ is local.
    \item distinct objects in $\mathcal{A}$ are non-isomorphic
    \item  for each object $x\in \mathcal{A}$, we have $\dim_{\mathbb{K}}\left(\oplus_{y\in \mathcal{A}}\mathcal{A}(x,y)\right)<\infty$ and $\dim_{\mathbb{K}}\left(\oplus_{y\in \mathcal{A}}\mathcal{A}(y,x)\right)<\infty$
\end{enumerate}

\medskip

The quiver $Q$ is $\emph{locally finite}$ if the sets $h^{-1}(x)$ and $t^{-1}(x)$ are finite for every vertex $x \in Q_0$. Let $\mathfrak{m}$ be the ideal of $\mathbb{K}Q$ generated by the arrows of $Q$, that is, the ideal whose value $\mathfrak{m}(x, y)$ at a pair of vertices $(x, y)$ is the $\mathbb{K}$-vector space generated by the paths of positive length from $x$ to $y$. A two sided ideal $I \subseteq \mathbb{K}Q$ is said to be $\emph{admissible}$ if for every $x, y \in Q_0$ one has $I(x,y) \subseteq \mathfrak{m}^2(x,y)$ and for every $x \in Q_0$ there exists a natural number $n_x \geq 2$ such that $I$ contains every path of length greater than $n_x$ having $x$ as head or tail. In this situation, the pair $(Q,I)$ will be called a $\emph{bound quiver}$. When an admissible ideal is clearly understood from the context, a $\emph{relation}$ in $\mathbb{K}Q$ is an element of that admissible ideal.

Note that when $I$ is an admissible ideal, the first two conditions above are automatically satisfied. Additionally, if $Q$ is locally finite, then the third condition holds.

\subsection{Homotopy relations for bound quivers}

Let $(Q, I)$ be a bound quiver, and let $x,y$ be two vertices of $Q$. Consider the relation $\rho=\sum_{i=1}^{r} \lambda_i w_i$, where $\lambda_i \in\mathbb{K}^{*}$ and the $w_i$ are pairwise distinct paths of length at least two from $x$ to $y$. We say $\rho$ is a $\emph{minimal relation}$ if:

\begin{enumerate}
    \item $r\geq2$, and
    \item for every non-empty proper subset $S$ of $\lbrace1,\dotsc,r\rbrace$ we have $\sum_{i\in S} \lambda_i w_i \notin I$.
\end{enumerate}

Given an arrow $\alpha : x \rightarrow y$ in $Q$, we denote by $\alpha^{-1}$ its formal inverse of head $h(\alpha^{-1})=x$
and tail $t(\alpha^{-1})=y$. A $\emph{walk}$ $w$ from a vertex $x$ to a vertex $y$ is a sequence $w = \alpha_{n}\dotsb\alpha_{2}\alpha_{1}$ of arrows and formal inverses of arrows such that $t(\alpha_{1})=x$, $h(\alpha_{n})=y$, and $t(\alpha_{i})=h(\alpha_{i-1})$ for every $1<i\leq n$. If $w$ and $w'$ are walks in $Q$ such that $t(w')=h(w)$, one can consider the walk $w'w$. This yields a product on the set of walks, which, however, is not everywhere defined. We define the $\emph{homotopy relation}$ on the set of walks on $Q$ as the smallest equivalence relation $\sim$ satisfying:

\begin{enumerate}
    \item For each arrow $\alpha: x \rightarrow y$, one has $\alpha^{-1}\alpha\sim e_x$ and $\alpha\alpha^{-1}\sim e_{y}$.
    \item For each minimal relation $\sum_{i=1}^{r} \lambda_i w_i$, one has $w_i \sim w_j$, for all $i, j \in \lbrace1,\dotsc,r\rbrace$.
    \item If $u, v, w$ and $w'$ are walks, and $u \sim v$, then $wuw'\sim  wvw'$, whenever these
    compositions are defined.
\end{enumerate}

We denote by $[w]$ the homotopy class of a walk $w$.

\subsection{Galois coverings of bound quivers}

Let $(Q,I)$ be a bound quiver with $Q$ connected, and fix a vertex $x_0 \in Q_0$. Let $\pi_1(Q,x_0)$ be the fundamental group of the underlying graph of $Q$ at the vertex $x_0$. We have that $\pi_{1}(Q,x_0)$ is a free group, and if $Q$ is finite, then $\pi_{1}(Q,x_0)$ is isomorphic to the free group on $\chi(Q) = \vert Q_1\rvert - \lvert Q_0\rvert + 1$ generators, see \cite{Rotman2013-io}. Moreover, let $N(Q,I,x_0)$ be the normal subgroup of $\pi_1(Q,x_0)$ generated by all the elements of the form $wuv^{-1}w^{-1}$, where $w$ is a walk from $x_0$ to, say $x$, and $u, v$ are two homotopic paths starting and ending at $x$.

\begin{defn}
    Let $(Q,I)$ be a connected bound quiver. The fundamental group is then defined to be $\pi_1(Q,I) = \pi_1(Q,x_0)/N(Q,I,x_0)$.
\end{defn}

In order to define the notion of Galois covering, we first need to define morphisms of bound quivers, as follows.

\begin{defn}
    Let $\Gamma,Q$ be two quivers.
    \begin{enumerate}
        \item A $\emph{quiver morphism}$, written as $f:\Gamma\rightarrow Q$, is a pair of maps $f_i:\Gamma_i\rightarrow Q_{i}$, for $i\in\lbrace0,1\rbrace$, such that every arrow $\alpha:x\rightarrow y$ in $\Gamma$ induces an arrow $f_1(\alpha):f_0(x)\rightarrow f_0(y)$ in $Q$.

        Given such a morphism, we can extend it naturally to paths in $\Gamma$: if $p = a_\ell a _{\ell-1} \dotsb a _1$ is a path of $\Gamma$ starting at $x$ and ending at $y$, then $f(p)\coloneqq f_1(a_\ell) f_1(a _{\ell-1}) \dotsb f_1(a _1)$ is a path in $Q$ starting at $f_0(x)$ and ending at $f_0(y)$. Hence, it induces in a natural way a $\mathbb{K}$-linear map $f :\mathbb{K}\Gamma\rightarrow\mathbb{K}Q$.
        \item A $\emph{bound quiver morphism}$, written as $f:(\Gamma,J)\rightarrow(Q,I)$, is given by a quiver morphism $f:\Gamma\rightarrow Q$ such that $f(J)\subseteq I$. 
    \end{enumerate}
\end{defn}

We can think of $f$ as being a functor from $\mathbb{K}\Gamma/J$ to $\mathbb{K}Q/I$ and we will make this identification in the sequel. Given a bound quiver $(Q,I)$ and a vertex $x\in Q_{0}$, we denote by $x^{\rightarrow}$ (resp. $^{\rightarrow}x$) the set of outgoing arrows (resp. incoming arrows) at vertex $x$. Recall the following definition from \cite{DELAPENA1986129}.

\begin{defn}
    A $\emph{Galois $G$-covering}$ (or Galois covering) of a bound quiver $(Q,I)$ is a bound quiver morphism $p : (\Gamma,J)\rightarrow (Q,I)$ together with a group $G$ of automorphisms of $(\Gamma,J)$ such that:

    \begin{enumerate}
        \item $G$ acts freely on $\Gamma_0$,
        \item $pg=p$ for every $g\in G$ and, for $x,y\in\Gamma_{0}$, $p(x)=p(y)$ if and only if there exists $g\in G$ such that $y = g(x)$.
        \item For every $x \in Q_0$ and $\hat{x} \in p^{-1}(x)$ the map $p$ induces bijections $\hat{x}^{\rightarrow} \rightarrow x^{\rightarrow}$ and $^{\rightarrow}\hat{x} \rightarrow ^{\rightarrow}x$.
        \item $I$ is the ideal generated by the elements of the form $p(\rho)$, with $\rho\in J$. 
    \end{enumerate}
    The group $G$ is called the \emph{Galois group} of the covering.
\end{defn}

The $\emph{universal Galois covering}$, see \cite{MARTINEZVILLA1983277}, $\pi : (\widetilde{Q},\widetilde{I})\rightarrow (Q,I)$ given by the group $G =\pi_1(Q,I)$ is the one of particular interest among all the Galois coverings of $(Q,I)$. More precisely, the universal cover $(\widetilde{Q},\widetilde{I})$ at some fixed vertex $x_0 \in Q_0$ is defined in the following way: The vertices of $\Gamma$ are the homotopy classes $[w]$ of walks of $Q$ starting at $x_0$, and there is an arrow from $[w]$ to $[w']$ whenever $w' = \alpha w$, with $\alpha\in Q_1$. This provides a map of quivers $\pi : \widetilde{Q} \rightarrow Q$ defined by $\pi([w]) = x$, where $x=h(w)$. Finally, the ideal $\widetilde{I}$ is defined to be generated by the inverse images under $\pi$ of the generators of $I$.

If there is another Galois covering $p : (\Gamma, J )\rightarrow(Q,I)$
given by a group $H$, then there exists a Galois covering $\pi^{*}:(\widetilde{Q},\widetilde{I})\rightarrow(\Gamma,J)$ such that $\pi = p\pi^{*}$.

\begin{equation}
\phantomsection\label{UnivPropOfGaloisCover}
    \begin{tikzcd}
	   {(\widetilde{Q},\widetilde{I})} && {(\Gamma,J)} \\
	   \\
	   {(Q,I)}
	   \arrow["{\pi^{*}}", dashed, from=1-1, to=1-3]
	   \arrow["\pi"', from=1-1, to=3-1]
	   \arrow["{p}", from=1-3, to=3-1]
    \end{tikzcd}
\end{equation}

Moreover, in this situation there is a normal subgroup $N$ of $\pi_1(Q, I)$ such that $\pi_1(\Gamma, J) \cong N$ and $\pi_1(Q, I)/N \cong H$, see \cite{BSMF_1983__111__21_0}, \cite{EdGreen}, \cite{MARTINEZVILLA1983277}.

\subsection{Pull-up and push-down functors}

In this section we define two special functors associated with the Galois covering of bound quivers, and look at some of their properties.

\begin{defn}
    Let $\mathcal{F}:\mathcal{C}\rightarrow \mathcal{D}$ be a $\mathbb{K}$-linear functor between two $\mathbb{K}$-linear categories. We call $\mathcal{F}$  a $\emph{covering functor}$ if the maps
    \[
        \coprod_{z/b}\mathcal{C}(x,z)\rightarrow \mathcal{D}(a,b)\;\;\;\text{and}\;\;\; \coprod_{t/a}\mathcal{C}(t,y)\rightarrow \mathcal{D}(a,b)
    \]
    which are induced by $\mathcal{F}$, are bijective for any two objects $a$ and $b$ of $\mathcal{D}$. Here $t$ and $z$ range over all objects of $\mathcal{C}$ such that $\mathcal{F}t=a$ and $\mathcal{F}z=b$, respectively; the maps are also supposed to be bijective for all $x$ and $y$ chosen such that $\mathcal{F}x=a$ and $\mathcal{F}y=b$, respectively.
\end{defn}

Given a quiver $Q$ and an admissible ideal $I$ in the $\mathbb{K}$-linear category $\mathbb{K}Q$, let $\mathcal{A}$ be the $\mathbb{K}$-linear category $\mathbb{K}Q/I$. The left $\mathcal{A}$-module $M$ (or representation of $\mathcal{A}$) is called $\emph{locally-finite dimensional}$ if $M(x)$ is finite dimensional, for all objects $x \in \mathcal{A}$. We denote by $\mathcal{A}\MOD$ the category of all left $\mathcal{A}$-modules, by $\mathcal{A}\operatorname{-\,Mod}$ (resp., $\mathcal{A}\mod$) the full subcategory formed by all locally-finite dimensional (resp., finite dimensional) left $\mathcal{A}$-modules. Denote by $\mathsf{proj}\,\mathcal{A}$ the subcategory of $\mathcal{A}\mod$ formed by projective $\mathcal{A}$-modules. Given a vertex $x\in Q_{0}$, $P_{x}$ denotes the indecomposable projective module in $\mathsf{proj}\,\mathcal{A}$ at vertex $x$.

Let $p:(\Gamma,J)\rightarrow(Q,I)$ be a Galois covering of bound quivers, with $G$ as a group of automorphisms of $(\Gamma,J)$. If $\mathcal{B} \coloneqq \mathbb{K}\Gamma/J$ and $\mathcal{A}\coloneqq\mathbb{K}Q/I$, then $p$ induces naturally a covering functor between $\mathbb{K}$-linear categories $\mathcal{F} : \mathcal{B}\rightarrow \mathcal{A}$, and following \cite{Bongartz-Gabriel}, we obtain two new functors:

\begin{enumerate}
    \item The $\emph{pull-up functor}$ $\mathcal{F}_{\bullet} : \mathcal{A}\MOD\rightarrow \mathcal{B}\MOD$, given by $\mathcal{F}_{\bullet}(M) = M \circ \mathcal{F}$, for any $M\in \mathcal{A}\MOD$, and
    \item the $\emph{push-down functor}$ $\mathcal{F}_{\lambda}: \mathcal{B}\MOD\rightarrow \mathcal{A}\MOD$, the left adjoint to $\mathcal{F}_{\bullet}$ defined by the conditions: $\mathcal{F}_{\lambda}$ is right exact and $\mathcal{F}_{\lambda}P_{\hat{x}}=P_x$, for any vertices $\hat{x} \in\Gamma_{0} ,x\in Q_0$ with $\mathcal{F}\hat{x} =x$.
\end{enumerate}

Given $M\in\mathcal{B}\MOD$, we get $\mathcal{F}_{\lambda}M\in \mathcal{A}\MOD$ in the following way: For each object $x\in \mathcal{A}$, we set

\[
    \mathcal{F}_\lambda M(x)=\bigoplus_{\hat{x}/x}M(\hat{x})
\]
where $\hat{x}$ ranges over all objects of $\mathcal{B}$ such that $\mathcal{F}\hat{x}=x$. For an arrow 
\begin{tikzcd}
    x \arrow[r,"\alpha"] & y
\end{tikzcd}
in $\mathcal{A}$, the map $\mathcal{F}_\lambda M(\alpha):\mathcal{F}_\lambda M(x)\rightarrow\mathcal{F}_\lambda M (y)$ assigns to $(v_{\hat{x}})\in\bigoplus\limits_{\hat{x}/x}M(\hat{x})$ the family $\sum\limits_{\hat{x}}M(\leftindex_{\hat{x}}{\widehat{\alpha}}_{\hat{y}})(v_{\hat{x}})\in\bigoplus\limits_{\hat{y}/y}M(\hat{y})$, where $\leftindex_{\hat{x}}{\widehat{\alpha}}_{\hat{y}}$ runs through all the preimages of $\alpha$.

\begin{rmk}
    Note that the pull-up functor $\F_{\bullet}$ restricts to locally finite-dimensional modules, and the push-down functor $\F_{\lambda}$ restricts to finite-dimensional modules.
\end{rmk}

We will denote by $\mathcal{B}\ind$ the full subcategory of $\mathcal{B}\mod$ formed by choosing a representative of each isomorphism class of finite-dimensional indecomposable $\mathcal{B}$-modules. We will choose these representatives in such a way so that if $T\in \mathcal{B}\ind$, then $T^{\,g}\in\mathcal{B}\ind$, for each $g\in G$. Similarly, we let $\mathcal{A}\ind$ denote the full subcategory of $\mathcal{A}\mod$ formed by choosing a representative of each isomorphism class of finite-dimensional indecomposable $\mathcal{A}$-modules.

\begin{enumerate}
    \item $\F_{\lambda}$ and $\F_{\bullet}$ are exact functors and send indecomposable projective modules to indecomposable projective modules
    \item If the push-down functor $\mathcal{F}_{\lambda}$ preserves indecomposable modules, then $\mathcal{F}_{\lambda}$ induces an injection $\mathcal{F}_{\lambda}:\mathcal{B}\ind/G\hookrightarrow \mathcal{A}\ind$.
    \item If $G$ is torsion-free, then $\mathcal{F}_{\lambda}$ preserves indecomposable modules. More generally, if no element of $G$ fixes $M\in\mathcal{B}\MOD$, then $F_\lambda(M)$ is indecomposable.
    \item $\F_{\bullet}\F_{\lambda}\cong \bigoplus_{g\in G} \mathrm{Id}_{\,\mathcal{B}\MOD}^{\,g}$
    \item If $N\in \mathcal{A}\ind$ and $M\in \mathcal{B}\MOD$ are such that $\F_{\lambda}M=N$, then $M\in\mathcal{B}\ind$.
    \item If $N\in \mathcal{A}\ind$ and $M_1,M_2\in \mathcal{B}\MOD$ are such that $\F_{\lambda}M_{1}=N=\F_{\lambda}M_{2}$, then $M_{1}\cong M_{2}^{\,g}$, for some $g\in G$.
\end{enumerate}

\subsection{Basic notions of \texorpdfstring{$\tau$}{tau}-tilting theory}

Let $\mathcal{A}$ be a finite dimensional $\mathbb{K}$-algebra. For $M\in \mathcal{A}\mod$, we will use $\vert M\rvert$ to denote the number of non-isomorphic indecomposable direct summands of $M$. We denote the Auslander-Reiten translate of $\mathcal{A}\mod$ by $\tau_{\mathcal{A}}$.

We recall some basic definitions and some important properties of support $\tau_{\mathcal{A}}$-tilting modules from \cite{AIR}.

\begin{defn} Let $M\in \mathcal{A}\mod$. Then $M$ is said to be
    \begin{enumerate}
        \item  $\tau_{\mathcal{A}}$-$\emph{rigid}$ if $\Hom_{\mathcal{A}}(M,\tau_{\mathcal{A}} M)=0$.
        \item  $\tau_{\mathcal{A}}$-$\emph{tilting}$ (resp., $\emph{almost complete }\tau_{\mathcal{A}}$-$\emph{tilting}$ module) if $M$ is $\tau_{\mathcal{A}}$-rigid and $\lvert M\rvert=\lvert \mathcal{A}\rvert$ (resp., $\lvert M\rvert=\lvert \mathcal{A}\rvert-1$).
        \item $\emph{support }\tau_{\mathcal{A}}$-$\emph{tilting}$ if there exists an idempotent $e$ of $\mathcal{A}$ such that $M$ is a $\tau_{\mathcal{A}}$-tilting $\mathcal{A}/\langle e\rangle$-module.
    \end{enumerate}
\end{defn}

We can view them as certain pairs of $\mathcal{A}$-modules.

\begin{defn}
\phantomsection\label{supptautiltmod}
    Let $(M,P)$ be a pair with $M\in \mathcal{A}\mod$ and $P\in \mathsf{proj}\,\mathcal{A}$. The pair $(M,P)$ is called
    \begin{enumerate}
        \item a $\emph{$\tau_{\mathcal{A}}$-rigid pair}$ if $M$ is $\tau_{\mathcal{A}}$-rigid and $\Hom_{\mathcal{A}}(P,M)=0$.
        \item a \emph{support $\tau_{\mathcal{A}}$-tilting pair} (resp., \emph{almost complete support $\tau_{\mathcal{A}}$-tilting pair}) if $(M,P)$ is $\tau_{\mathcal{A}}$-rigid and $\lvert M\rvert+\lvert P\rvert=\lvert \mathcal{A}\rvert$ (resp., $\lvert M\rvert+\lvert P\rvert=\lvert \mathcal{A}\rvert-1$).
    \end{enumerate}
\end{defn}

\begin{defn}
    Let $M,M'\in \mathcal{A}\mod$ and $P,P'\in \mathsf{proj}\,\mathcal{A}$.
    \begin{enumerate}
        \item A module $X\in \mathcal{A}\mod$ is called $\emph{basic}$ if every indecomposable direct summand of $X$ occurs only once in the direct sum decomposition of $X$ into indecomposables.
        \item We say that $(M, P )$ is $\emph{basic}$ if $M$ and $P$ are basic.
        \item We say that $(M, P )$ is a direct summand of $(M',P')$ if $M$ is a direct summand of $M'$ and $P$ is a direct summand of $P'$.
        \item We say that $(M, P )$ is isomorphic to $(M',P')$ if $M$ is isomorphic to $M'$ and $P$ is isomorphic to $P'$.
    \end{enumerate}
\end{defn}

\begin{rmk}
    Given a support $\tau_{\mathcal{A}}$-tilting pair $(M,P)$ with $P$ basic, $M$ determines $P$ uniquely up to isomorphism, see \cite[Proposition 2.3]{AIR}.
\end{rmk}

We will use $s\tau_{\mathcal{A}}\tilt$ to denote the set of all basic support $\tau_{\mathcal{A}}$-tilting pairs (up to isomorphism). We have the following analog of Bongartz completion for tilting modules.

\begin{thm}[see Theorem 2.10 \cite{AIR}]
\label{tau-rigid}
    Any $\tau_{\mathcal{A}}$-rigid $\mathcal{A}$-module is a direct summand of some $\tau_{\mathcal{A}}$-tilting $\mathcal{A}$-module.
\end{thm}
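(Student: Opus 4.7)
The plan is to reduce the statement to the Adachi--Iyama--Reiten bijection between functorially finite torsion classes in $\mathcal{A}\mod$ and basic support $\tau_\mathcal{A}$-tilting pairs. Given a $\tau_\mathcal{A}$-rigid module $U$, I would attach to it the full subcategory
\[
\mathcal{T}_U := {}^\perp(\tau_\mathcal{A} U) = \{X \in \mathcal{A}\mod \mid \Hom_\mathcal{A}(X, \tau_\mathcal{A} U) = 0\}.
\]
A short check shows that $\mathcal{T}_U$ is a torsion class: closure under quotients is the left-exactness of $\Hom_\mathcal{A}(-, \tau_\mathcal{A} U)$, and closure under extensions is immediate from the associated long exact sequence. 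By the $\tau_\mathcal{A}$-rigidity of $U$ we have $U \in \mathcal{T}_U$, and since $\tau_\mathcal{A} U$ has no nonzero projective summands, the regular module $\mathcal{A}$ also lies in $\mathcal{T}_U$.

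The main obstacle is establishing that $\mathcal{T}_U$ is functorially finite. My approach would be to fix a minimal projective presentation $P_1 \xrightarrow{p} P_0 \to U \to 0$ and use the Auslander--Reiten formula to identify $\Hom_\mathcal{A}(M, \tau_\mathcal{A} U)$ with the cokernel of the map $\Hom_\mathcal{A}(p, M) : \Hom_\mathcal{A}(P_0, M) \to \Hom_\mathcal{A}(P_1, M)$. Given $M \in \mathcal{A}\mod$, one then constructs a left $\mathcal{T}_U$-approximation via a universal short exact sequence
\[
0 \to M \to M' \to U^n \to 0
\]
whose extension class is chosen so that the resulting connecting map surjects onto $\Hom_\mathcal{A}(M, \tau_\mathcal{A} U)$, forcing $M' \in \mathcal{T}_U$. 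Verifying both that $M'$ truly lies in $\mathcal{T}_U$ and that $M \to M'$ is a bona fide left approximation is the delicate computation; the right approximation is obtained dually using an injective presentation of $U$.

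Once functorial finiteness is in hand, the AIR bijection produces a basic support $\tau_\mathcal{A}$-tilting pair $(T,P)$ with $\operatorname{Fac}(T) = \mathcal{T}_U$. The inclusion $\mathcal{A} \in \mathcal{T}_U = \operatorname{Fac}(T)$ forces $P = 0$, so $T$ is a genuine $\tau_\mathcal{A}$-tilting module rather than only a support $\tau_\mathcal{A}$-tilting one. Finally, each indecomposable summand of $U$ is $\Ext$-projective in $\mathcal{T}_U$ (again by the Auslander--Reiten formula together with $\tau_\mathcal{A}$-rigidity), and under the AIR correspondence these indecomposable $\Ext$-projectives coincide with the indecomposable summands of $T$. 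Hence $U$ is a direct summand of $T$, completing the plan.
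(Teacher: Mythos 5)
The paper itself gives no proof of this statement: it is imported verbatim from Adachi--Iyama--Reiten (their Theorem 2.10), so there is nothing internal to compare with. Your plan is essentially a reconstruction of the AIR Bongartz-completion argument via the torsion class $\mathcal{T}_U={}^{\perp}(\tau_{\mathcal{A}}U)$, which is the right strategy, but two of your steps do not stand as written.

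First, the claim that $\mathcal{A}\in\mathcal{T}_U$ ``since $\tau_{\mathcal{A}}U$ has no nonzero projective summands'' is false. One has $\Hom_{\mathcal{A}}(\mathcal{A},\tau_{\mathcal{A}}U)\cong\tau_{\mathcal{A}}U$, so $\mathcal{A}\in{}^{\perp}(\tau_{\mathcal{A}}U)$ if and only if $\tau_{\mathcal{A}}U=0$, i.e.\ if and only if $U$ is projective; moreover a torsion class containing $\mathcal{A}$ is closed under quotients, hence is all of $\mathcal{A}\mod$. (Already for the $A_2$ quiver with $U$ the simple at the source, $\tau_{\mathcal{A}}U$ is the simple projective and $\Hom_{\mathcal{A}}(\mathcal{A},\tau_{\mathcal{A}}U)\neq 0$.) Consequently your argument that the support part $P$ vanishes collapses. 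The correct substitute is \emph{sincerity}: one shows that the Bongartz-type universal extension $0\to\mathcal{A}\to E\to U^{d}\to 0$ has $E\in\mathcal{T}_U$, so every simple occurs as a composition factor of an object of $\mathcal{T}_U$, and under the AIR correspondence the sincere functorially finite torsion classes are exactly those whose associated support $\tau_{\mathcal{A}}$-tilting pair has $P=0$.

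Second, the functorial finiteness step --- which you yourself flag as the delicate one --- is not actually carried out, and the mechanism you describe is garbled. From $0\to M\to M'\to U^{n}\to 0$ and $\Hom_{\mathcal{A}}(U,\tau_{\mathcal{A}}U)=0$ one gets $\Hom_{\mathcal{A}}(M',\tau_{\mathcal{A}}U)\cong\ker\delta$, where $\delta:\Hom_{\mathcal{A}}(M,\tau_{\mathcal{A}}U)\to\Ext^{1}_{\mathcal{A}}(U^{n},\tau_{\mathcal{A}}U)$ is the pushout along the extension class; so what must be arranged is that $\delta$ is \emph{injective}, not that some connecting map ``surjects onto $\Hom_{\mathcal{A}}(M,\tau_{\mathcal{A}}U)$'', and proving that a suitably universal extension achieves this is precisely the nontrivial content (in AIR it rests on the Auslander--Smal{\o} criterion $\Hom_{\mathcal{A}}(X,\tau_{\mathcal{A}}Y)=0\Leftrightarrow\Ext^{1}_{\mathcal{A}}(Y,\mathsf{Fac}\,X)=0$ together with a Bongartz-style computation). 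Two further remarks: right $\mathcal{T}_U$-approximations come for free from the torsion submodule, so no dual construction with injective copresentations is needed; and once $E\in\mathcal{T}_U$ is established it is cleaner to show ${}^{\perp}(\tau_{\mathcal{A}}U)=\mathsf{Fac}(E\oplus U)$ (using $\Ext^{1}_{\mathcal{A}}(U,X)\cong D\overline{\Hom}_{\mathcal{A}}(X,\tau_{\mathcal{A}}U)=0$ for $X\in\mathcal{T}_U$ to lift surjections $\mathcal{A}^{m}\twoheadrightarrow X$ to $E^{m}$) and invoke Smal{\o}'s criterion, rather than building approximations module by module. Your final steps --- Ext-projectivity of $U$ in $\mathcal{T}_U$ and the identification of $U$ with summands of $\mathcal{P}(\mathcal{T}_U)$ --- are fine once the above is repaired.
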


It turns out that support $\tau_{\mathcal{A}}$-tilting modules (pairs) have exactly two complements. This leads to the notion of mutations.

\begin{thm}[see Theorem 2.18 \cite{AIR}]
    Let $\mathcal{A}$ be a finite dimensional $\mathbb{K}$-algebra. Then any basic almost complete support $\tau_{\mathcal{A}}$-tilting pair for $\mathcal{A}$, up to isomorphism, is a direct summand of exactly two basic support $\tau_{\mathcal{A}}$-tilting pairs.
\end{thm}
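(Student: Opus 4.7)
The plan is to imitate the AIR argument, using Bongartz completion together with the bijection between support $\tau_{\mathcal{A}}$-tilting pairs and functorially finite torsion classes in $\mathcal{A}\mod$ (which I would either invoke or reprove in a short lemma).

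First, given an almost complete basic support $\tau_{\mathcal{A}}$-tilting pair $(U,Q)$, I would produce one completion explicitly. Writing $e$ for the idempotent of $\mathcal{A}$ such that $Q = \mathsf{add}(e\mathcal{A})$, the module $U$ is a $\tau$-rigid module over $\mathcal{A}/\langle e\rangle$. Applying Theorem \ref{tau-rigid} (Bongartz completion) inside $\mathcal{A}/\langle e \rangle\mod$, I would obtain an indecomposable $X \notin \mathsf{add}(U)$ so that $(U\oplus X, Q)$ is a basic support $\tau_{\mathcal{A}}$-tilting pair. Concretely, $X$ may be read off from the Bongartz torsion class $\mathcal{T}_{1} = {}^{\perp}(\tau_{\mathcal{A}} U)\cap (eA)^{\perp}$, which is the largest functorially finite torsion class in which $U$ is Ext-projective and $Q$ kills every module.

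Next, for the second completion I would run the standard exchange argument. There are two cases depending on whether the missing slot is on the module side or on the projective side:
\begin{enumerate}
\item If $X$ (the newly added indecomposable) lies in $\mathcal{A}\mod$ and not in $\mathsf{proj}(\mathcal{A}/\langle e\rangle)$, take a minimal left $\mathsf{add}(U)$-approximation $X \xrightarrow{f} U^{0}$ and set $Y = \mathrm{coker}(f)$; standard computations (using the Auslander-Reiten formula to control $\Hom(-,\tau_{\mathcal{A}}-)$) show that $(U\oplus Y, Q)$ is again a basic support $\tau_{\mathcal{A}}$-tilting pair, non-isomorphic to $(U\oplus X, Q)$.
\item If instead $Y$ turns out to be zero, i.e. $f$ is surjective, then the second completion lives on the projective side: one obtains a new projective summand $P$ so that $(U, Q\oplus P)$ completes $(U,Q)$.
\end{enumerate}
In either case, the two completions correspond to two distinct functorially finite torsion classes $\mathcal{T}_{1}\supsetneq \mathcal{T}_{2}$ with $\mathsf{Fac}(U)$ sitting between them.

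For uniqueness, I would appeal to the torsion-class bijection: if $(U\oplus Z, Q')$ is any basic support $\tau_{\mathcal{A}}$-tilting pair containing $(U,Q)$ as a direct summand, then its torsion class $\mathcal{T}_{Z}$ is a functorially finite torsion class containing $\mathsf{Fac}(U)$ in which $U$ remains Ext-projective and the projectives in $Q$ still act trivially. The key structural fact I would prove (or cite) is that the interval between $\mathsf{Fac}(U)$ (regarded as the torsion class determined by $(U,Q)$ viewed in $\mathcal{A}/\langle e\rangle$) and the Bongartz torsion class $\mathcal{T}_{1}$ contains exactly two functorially finite torsion classes in $\mathcal{A}\mod$ realisable as $\mathsf{Fac}$ of a completion, namely $\mathcal{T}_{1}$ and $\mathcal{T}_{2}$ above. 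Consequently $Z$ must agree (up to isomorphism) with $X$ or with $Y$ (or $P$), proving the ``exactly two'' claim.

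The main obstacle will be the uniqueness/exactly-two half: producing two completions is essentially a direct construction, but ruling out a third requires careful use of the Auslander-Reiten formula
$\Hom_{\mathcal{A}}(X,\tau_{\mathcal{A}} Y)\cong D\overline{\Ext^{1}_{\mathcal{A}}}(Y,X)$
together with the fact that the minimal left $\mathsf{add}(U)$-approximation of any completing summand is essentially unique, so the exchange sequence pins down the other end. This torsion-class/approximation interplay is where the real content of Theorem 2.18 of \cite{AIR} lies.
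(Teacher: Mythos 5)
Note first that the paper contains no proof of this statement: it is quoted directly from Adachi--Iyama--Reiten (Theorem 2.18 of \cite{AIR}), so your proposal can only be measured against the argument in \cite{AIR}, and the strategy you describe (Bongartz completion, exchange along a minimal left $\mathsf{add}(U)$-approximation, uniqueness via the bijection with functorially finite torsion classes) is indeed that argument in outline. The constructive half is essentially right: completing $U$ over $\mathcal{A}/\langle e\rangle$ via Theorem \ref{tau-rigid} adds exactly one indecomposable summand, and the exchange sequence $X\xrightarrow{f}U^{0}\to Y\to 0$ supplies the other candidate. One small imprecision: when $Y\neq 0$ it is in general a direct sum of copies of a single indecomposable $Y_{1}$, and the second pair is $(U\oplus Y_{1},Q)$ rather than $(U\oplus Y,Q)$, otherwise basicness fails (compare how the present paper uses $\mathcal{F}_{\lambda}Y\cong (T_i')^{\oplus m}$ in Proposition \ref{LeftMutInImg}). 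You also need AIR's fact that the projective part of a support $\tau_{\mathcal{A}}$-tilting pair is determined by the module part to see that at most one completion can live on the projective side; you use this implicitly but never state it.

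The genuine gap is the ``exactly two'' half, which you yourself flag as the main obstacle and then defer. The ``key structural fact'' you propose to prove or cite --- that the interval between $\mathsf{Fac}\,U$ and ${}^{\perp}(\tau_{\mathcal{A}}U)\cap Q^{\perp}$ contains exactly two functorially finite torsion classes realisable as $\mathsf{Fac}$ of a completion --- is not an auxiliary lemma but is equivalent to the theorem itself: any functorially finite torsion class $\mathcal{T}$ in that interval contains $U$, has $U$ Ext-projective in it (since $\mathcal{T}\subseteq {}^{\perp}(\tau_{\mathcal{A}}U)$) and satisfies $\Hom_{\mathcal{A}}(Q,\mathcal{T})=0$, so the support $\tau_{\mathcal{A}}$-tilting pair attached to $\mathcal{T}$ automatically contains $(U,Q)$ and is a completion; hence counting realisable torsion classes in the interval is literally counting completions. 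The appeal to the Auslander--Reiten formula and to uniqueness of minimal approximations gestures at the right tools but does not rule out a third completion whose torsion class sits strictly between the two endpoints; that exclusion is precisely the content of the proof in \cite{AIR} (one must show that a completion whose torsion class is not the Bongartz class ${}^{\perp}(\tau_{\mathcal{A}}U)\cap Q^{\perp}$ has torsion class exactly $\mathsf{Fac}\,U$, and that the two endpoints are distinct). As written, your argument constructs two completions but proves neither that they are non-isomorphic in all cases nor that no others exist, so the theorem's actual content remains uncited and unproved; in the context of the paper, simply citing \cite{AIR}, as the authors do, is the intended resolution.
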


These two support $\tau_{\mathcal{A}}$-tilting pairs are said to be mutations of each other.

\begin{defn}
    Let $(M,P)$ and $(M',P')$ be two non-isomorphic basic support $\tau_{\mathcal{A}}$-tilting pairs for $\mathcal{A}$. They are said to be $\emph{mutations}$ of each other if there exists a basic almost complete support $\tau_{\mathcal{A}}$-tilting pair $(T,Q)$ which is a direct summand of $(M,P)$ and $(M',P')$. 
    
    If $X$ is an indecomposable $\mathcal{A}$-module satisfying either $M = T\oplus X$ and $P = Q$; or $P = Q \oplus X$ and $M = T$, then we write $(M',P') = \mu_{X}(M,P)$ or simply $M' = \mu_{X}(M)$.
\end{defn}

For $M\in \mathcal{A}\mod$, we denote by $\mathsf{Fac}\,M$, the subcategory of $\mathcal{A}\mod$ of all factor modules of finite direct sums of copies of $M$. Let $\operatorname{f-tors}\mathcal{A}$ denote the set of functorially finite torsion classes in $\mathcal{A}\mod$.

\begin{thm}[see Theorem 2.7 \cite{AIR}]
    There is a bijection
    \[
    \begin{array}{cccc}
         s\tau_{\mathcal{A}}\tilt & \leftrightarrow & \operatorname{f-tors}\mathcal{A} & \text{given by}\\
         M & \mapsto & \mathsf{Fac}\,M & \;\\
         \mathcal{P}(\mathcal{T}) & \mapsfrom & \mathcal{T} & \;
    \end{array}
    \]
    where $\mathcal{P}(\mathcal{T})$ is the direct sum of one copy of each of the indecomposable $\Ext$-projective objects in $\mathcal{T}$ (up to isomorphism).
\end{thm}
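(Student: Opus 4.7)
The plan is to establish this bijection by constructing both maps, verifying each is well-defined, and then checking they are mutual inverses. The two foundational tools I would use are the \emph{Auslander-Reiten duality} relating $\Hom_\mathcal{A}(X,\tau_\mathcal{A} Y)$ to (a quotient of) $D\Ext_\mathcal{A}^1(Y,X)$, and the \emph{Auslander-Smal\o} criterion that a functorially finite extension-closed subcategory of $\mathcal{A}\mod$ admits only finitely many indecomposable $\Ext$-projectives (up to isomorphism), and that these furnish enough right approximations.

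\textbf{Forward direction.} Given a support $\tau_\mathcal{A}$-tilting pair $(M,P)$, I would first show $\mathsf{Fac}\,M$ is a functorially finite torsion class. Closure under quotients is immediate. Closure under extensions I would obtain from a horseshoe-style argument: given $0\to X\to Y\to Z\to 0$ with $X,Z\in\mathsf{Fac}\,M$, lift surjections $M^m\twoheadrightarrow X$ and $M^n\twoheadrightarrow Z$ to a surjection $M^{m+n}\twoheadrightarrow Y$, using the AR-duality consequence $\Ext_\mathcal{A}^1(M,X)=0$ for $X\in\mathsf{Fac}\,M$, which in turn follows from the $\tau_\mathcal{A}$-rigidity of $M$. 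For contravariant finiteness, I would take a right $\add(M)$-approximation of any target module and pass to its image in $\mathsf{Fac}\,M$ to produce the desired right $\mathsf{Fac}\,M$-approximation; covariant finiteness then follows automatically from the torsion-class structure.

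\textbf{Reverse direction.} Given a functorially finite torsion class $\mathcal{T}$, Auslander-Smal\o\ provides a direct sum $M=\mathcal{P}(\mathcal{T})$ of a full set of indecomposable $\Ext$-projectives in $\mathcal{T}$. To see $M$ is $\tau_\mathcal{A}$-rigid, I would argue that any non-zero $\phi\in\Hom_\mathcal{A}(X,\tau_\mathcal{A} Y)$ with $X,Y\in\add(M)$ yields via AR-duality a non-split short exact sequence $0\to Y\to E\to X\to 0$ whose middle term lies in $\mathcal{T}$ (since $\mathcal{T}$ is extension-closed), contradicting the $\Ext$-projectivity of $X$ in $\mathcal{T}$. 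For the projective part, I would define $P$ as the direct sum of one copy of each indecomposable projective $P_i$ not belonging to $\mathcal{T}$; then $\Hom_\mathcal{A}(P,M)=0$, since the simple top of each such $P_i$ is absent from any module in $\mathcal{T}$. For the count $\lvert M\rvert+\lvert P\rvert=\lvert\mathcal{A}\rvert$, I would pass to the quotient algebra $\mathcal{A}/\langle e\rangle$ (where $e$ is the idempotent associated with $P$), under which $M$ becomes an ordinary tilting $\mathcal{A}/\langle e\rangle$-module and its summand count equals $\lvert\mathcal{A}/\langle e\rangle\rvert=\lvert\mathcal{A}\rvert-\lvert P\rvert$ by classical tilting theory.

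\textbf{Mutual inverses.} Starting from $\mathcal{T}$, the inclusion $\mathsf{Fac}(\mathcal{P}(\mathcal{T}))\subseteq\mathcal{T}$ is immediate, and the reverse inclusion follows because every right $\add(\mathcal{P}(\mathcal{T}))$-approximation of $X\in\mathcal{T}$ is surjective by the defining property of $\Ext$-projective covers. Starting from $(M,P)$, the identity $\mathcal{P}(\mathsf{Fac}\,M)=M$ I would obtain by noting that $M$ is itself $\Ext$-projective in $\mathsf{Fac}\,M$ (again via AR-duality applied to $\tau_\mathcal{A}$-rigidity), so $\add(M)\subseteq\add(\mathcal{P}(\mathsf{Fac}\,M))$, and then invoking the summand count to force equality. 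The main obstacle is precisely this last step: ruling out extra indecomposable $\Ext$-projectives in $\mathsf{Fac}\,M$ beyond $\add(M)$ requires the delicate interplay of AR-duality with the reduction to classical tilting theory over the support quotient, which forms the technical heart of \cite[Theorem 2.7]{AIR}.
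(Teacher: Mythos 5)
First, a caveat: the paper does not prove this statement at all — it is recalled verbatim from \cite[Theorem 2.7]{AIR} as background — so your sketch can only be measured against the standard argument of Adachi--Iyama--Reiten, whose general shape (both maps, then mutual inverses, via Auslander--Smal\o{} and the formula relating $\Hom(-,\tau-)$ to extensions) you do follow. Within that shape, however, there are genuine errors. In the forward direction you have the two halves of functorial finiteness exactly backwards: for \emph{any} torsion class $\mathcal{T}$ the inclusion of the torsion submodule $tX\hookrightarrow X$ is already a right $\mathcal{T}$-approximation, so contravariant finiteness is the automatic part, whereas covariant finiteness does \emph{not} ``follow automatically from the torsion-class structure'' — if it did, every torsion class would be functorially finite, which fails for every $\tau$-tilting infinite algebra. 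Covariant finiteness of $\mathsf{Fac}\,M$ is precisely Smal\o{}'s theorem and needs its own construction. Second, your projective part is defined incorrectly: taking $P$ to be the sum of the indecomposable projectives \emph{not belonging to} $\mathcal{T}$ is not the right condition, and the claim that $P_i\notin\mathcal{T}$ forces its simple top to be absent from $\mathcal{T}$ is false. For $\mathcal{A}=\mathbb{K}(1\to 2)$ and $\mathcal{T}=\add(S_1)=\mathsf{Fac}\,S_1$ one has $P_1\notin\mathcal{T}$ but $\Hom_{\mathcal{A}}(P_1,S_1)\neq 0$; your recipe outputs $(S_1,P_1\oplus P_2)$, which is not a $\tau_{\mathcal{A}}$-rigid pair and has $3>|\mathcal{A}|$ summands, while the correct pair is $(S_1,P_2)$. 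The right condition is $\Hom_{\mathcal{A}}(P_i,\mathcal{T})=0$, i.e.\ the vertex $i$ lies outside the support of $\mathcal{T}$.

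The deeper problem is the counting step, which you yourself identify as the technical heart but then propose to settle ``by classical tilting theory'' over $\mathcal{A}/\langle e\rangle$. This does not work: a $\tau$-tilting module over the support algebra is in general \emph{not} a classical tilting module (its summands may have infinite projective dimension, e.g.\ $P_1\oplus S_1$ over the radical-square-zero cyclic Nakayama algebra on two vertices), so classical tilting theory cannot yield $\lvert\mathcal{P}(\mathcal{T})\rvert=\lvert\mathcal{A}\rvert-\lvert P\rvert$, nor can it rule out extra indecomposable $\Ext$-projectives in $\mathsf{Fac}\,M$ in your mutual-inverses step. In \cite{AIR} this count is obtained through the Bongartz-type completion and the characterization of $\tau$-tilting modules as maximal $\tau$-rigid ones, not by reduction to tilting modules. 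Finally, a smaller but real gap: AR duality gives $\Ext^1_{\mathcal{A}}(X,Y)\cong D\overline{\Hom}_{\mathcal{A}}(Y,\tau_{\mathcal{A}}X)$ with $\Hom$ taken modulo maps factoring through injectives, so a nonzero $\varphi\in\Hom_{\mathcal{A}}(X,\tau_{\mathcal{A}}Y)$ need not produce a non-split extension; the correct tool is the Auslander--Smal\o{} equivalence $\Hom_{\mathcal{A}}(M,\tau_{\mathcal{A}}N)=0$ if and only if $\Ext^1_{\mathcal{A}}(N,\mathsf{Fac}\,M)=0$, which is what both the rigidity of $\mathcal{P}(\mathcal{T})$ and the $\Ext$-projectivity of $M$ in $\mathsf{Fac}\,M$ actually rely on.
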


Using the above bijection, we get that the inclusion in $\operatorname{f-tors}\mathcal{A}$ gives a partial order on $s\tau_{\mathcal{A}}\tilt$.

\begin{defn}
    Let $M = T \oplus X$ and $M'$ be support $\tau_{\mathcal{A}}$-tilting $\mathcal{A}$-modules such that $M' = \mu_{X}(M)$ for some indecomposable $\mathcal{A}$-module $X$.
    \begin{enumerate}
        \item If $\mathsf{Fac}\,M\supseteq \mathsf{Fac}\,M'$, then we call $M'$ the $\emph{left mutation}$ of $M$, and we denote it by $M' = \mu_{X}^{-}(M)$
        \item If $\mathsf{Fac}\,M\subseteq \mathsf{Fac}\,M'$, then we call $M'$ the $\emph{right mutation}$ of $M$, and we denote it by $M' = \mu_{X}^{+}(M)$
    \end{enumerate} 
\end{defn}

We can define a quiver that records the information about left and right mutations.

\begin{defn}
\phantomsection\label{MutQui}
    The \emph{mutation quiver} $Q(s\tau_{\mathcal{A}}\tilt)$ of $\mathcal{A}$ is defined as follows:
    \begin{enumerate}
        \item The set of vertices are the elements of $s\tau_{\mathcal{A}}\tilt$
        \item We draw an arrow from $M$ to $M'$ if $M'$ is a left mutation of $M$.
    \end{enumerate}
\end{defn}

\begin{defn}
    If $\mathcal{A}$ is a finite dimensional $\mathbb{K}$-algebra, then $\mathcal{A}$ is called \emph{$\tau_{\mathcal{A}}$-tilting finite} if there are only finitely many isomorphism classes of indecomposable $\tau_{\mathcal{A}}$-rigid $\mathcal{A}$-modules. Otherwise, $\mathcal{A}$ is said to be \emph{$\tau_{\mathcal{A}}$-tilting infinite}.
\end{defn}

\subsection{Conventions}

We will use $\mathcal{A}$ to denote a finite-dimensional, connected, and basic $\mathbb{K}$-algebra given by a finite and connected bound quiver $(Q,I)$ with $I$ an admissible ideal in the path algebra $\mathbb{K}Q$. We let $p : (\Gamma,J)\rightarrow (Q,I)$ denote a Galois covering with $G$ as a group of automorphism of $(\Gamma,J)$. We also assume $G$ to be torsion-free.

We will use $\mathcal{F} : \mathcal{B} \coloneqq \mathbb{K}\Gamma/J\rightarrow \mathcal{A}\coloneqq\mathbb{K}Q/I$ to denote the covering functor between the two $\mathbb{K}$-linear categories, and $\F_{\bullet}\,, \F_{\lambda}$ to denote the pull-up and the push-down functor associated to the above Galois cover, respectively. We will denote by $\mathcal{A}\mod_1$ the full subcategory of $\mathcal{A}\mod$ of modules of first kind with respect to $\F_{\lambda}$. Since $G$ is torsion-free, we know that $\F_{\lambda}$ will preserve indecomposable modules.

Note that $\Gamma$ will be locally finite and hence, $\mathcal{B}$ is a locally bounded $\mathbb{K}$-linear category. By \cite[Theorem, Section 3.6, page 89]{GabUnivCov}, we know that the Auslander-Reiten translate exists for $\mathcal{B}\mod$. We will denote the Auslander-Reiten translate for $\mathcal{B}\mod$ and $\mathcal{A}\mod$ by $\tau_{\mathcal{B}}$ and $\tau_{\mathcal{A}}$, respectively.

As mentioned in Section \ref{intro}, by a subcategory $\mathcal{C}\subseteq \mathcal{B}\mod$ (resp. $\mathcal{D}\subseteq \mathcal{A}\mod$) of a $\mathbb{K}$-linear category we mean a subcategory of $\mathcal{B}\mod$ (resp. $\mathcal{A}\mod$) that is full, skeletal, and closed under finite direct sums and direct summands.

\section{Mutations of \texorpdfstring{$G$}{G}-orbits}

Let $\mathcal{C}\subseteq\mathcal{B}\mod$ be a subcategory. We will say that $\mathcal{C}$ is $\emph{$G$-equivariant}$ if, for every $\mathcal{B}$-module $M\in\mathcal{C}$, the module $M^g\in\mathcal{C}$ for every $g \in G$. For a $G$-equivariant subcategory $\mathcal{C}$ we write $\lvert\mathcal{C}\rvert_{G}$ to denote the number of $G$-orbits of distinct indecomposable objects in $\mathcal{C}$. For subcategories $\mathcal{C}_1,\mathcal{C}_2\subseteq\mathcal{B}\mod$, we denote by $\mathsf{add}(\mathcal{C}_1,\mathcal{C}_2)$ the smallest (additive) subcategory of $\mathcal{B}\mod$ containing both $\mathcal{C}_1$ and $\mathcal{C}_2$, and by $\mathsf{add}_{G}(\mathcal{C}_1,\mathcal{C}_2)$ the smallest $G$-equivariant (additive) subcategory of $\mathcal{B}\mod$ containing both $\mathcal{C}_1$ and $\mathcal{C}_2$. We define the notion of $(G,\tau_{\mathcal{B}})$-rigidity and $(G,\tau_{\mathcal{B}})$-tilting over a Galois covering in the following manner:

\begin{defn}
\phantomsection\label{(G,tau)-tilting}

    Let $M\in\mathcal{B}\mod$ and $\mathcal{T}$ be a subcategory in $\mathcal{B}\mod$.
    \begin{enumerate}
        \item We say $M$ is $\emph{$(G,\tau_{\mathcal{B}})$-rigid}$ if $\Hom_{\mathcal{B}}(M,\tau_{\mathcal{B}}M ^{g})=0$, for every $g\in G$.
        \item We say $\mathcal{T}$ is $\emph{$\tau_{\mathcal{B}}\,$-rigid}$ if $\Hom_{\mathcal{B}}(X,\tau_{\mathcal{B}}Y)=0$, for every $X,Y\in\mathcal{T}$.
        \item We say $\mathcal{T}$ is $\emph{$(G,\tau_{\mathcal{B}})$-tilting}$ if the following three conditions are satisfied:
        \begin{enumerate}
            \item $\mathcal{T}$ is $\tau_{\mathcal{B}}$-rigid
            \item $\mathcal{T}$ is $G$-equivariant.
            \item $\lvert\mathcal{T}\rvert_{G}=\lvert \mathcal{A}\rvert$
        \end{enumerate}
    \end{enumerate}
\end{defn}

\begin{rmk}
    The above definition of $(G,\tau_{\mathcal{B}})$-rigid and $(G,\tau_{\mathcal{B}})$-tilting is mainly motivated by Proposition \ref{NoHom} in the next section. Moreover, notice that given a $\tau_{\mathcal{A}}$-tilting $\mathcal{A}$-module, say $T$, the subcategory $\mathsf{add}(T)\subseteq \mathcal{A}\mod$ has exactly $\lvert\mathcal{A}\rvert$ many distinct indecomposable objects. Hence, we want the subcategory $\F_{\lambda}\mathcal{T}\subseteq \mathcal{A}\mod$ to also have exactly $\lvert \mathcal{A}\rvert$ many distinct indecomposable objects.
\end{rmk}

Similar to Definition \ref{supptautiltmod} we can define support $(G,\tau_{\mathcal{B}})$-tilting subcategories over a Galois covering.

\begin{defn}
\phantomsection\label{supp(G,tau)-tilting}
    Let $(\mathcal{T},\mathcal{P})$ be a pair where $\mathcal{T}\subseteq\mathcal{B}\mod$ and $\mathcal{P}\subseteq\mathsf{proj}\,\mathcal{B}$ are  subcategories
    \begin{enumerate}
        \item The pair $(\mathcal{T},\mathcal{P})$ is called \emph{$(G,\tau_{\mathcal{B}})$-rigid pair} if the following conditions are satisfied:
        \begin{enumerate}
            \item $\mathcal{T}$ is $\tau_{\mathcal{B}}\,$-rigid
            \item Both $\mathcal{T}$ and $\mathcal{P}$ are $G$-equivariant
            \item $\Hom_{\mathcal{B}}(\mathcal{P},\mathcal{T})=0$
        \end{enumerate}
        \item The pair $(\mathcal{T},\mathcal{P})$ is called a \emph{support $(G,\tau_{\mathcal{B}})$-tilting pair} (resp., \emph{almost complete support $(G,\tau_{\mathcal{B}})$-tilting pair}) if $(\mathcal{T},\mathcal{P})$ is $(G,\tau_{\mathcal{B}})$-rigid pair and $\lvert \mathcal{T}\rvert_{G}+\lvert \mathcal{P}\rvert_{G}=\lvert \mathcal{A}\rvert$ (resp., $\lvert \mathcal{T}\rvert_{G}+\lvert \mathcal{P}\rvert_{G}=\lvert \mathcal{A}\rvert-1$).
    \end{enumerate}
\end{defn}

\begin{defn}
\phantomsection\label{equivsubcatpair}
    Let $\mathcal{T},\mathcal{T}'\subseteq\mathcal{B}\mod$ and $\mathcal{P},\mathcal{P}'\subseteq\mathsf{proj}\,\mathcal{B}$ be subcategories.
    \begin{enumerate}
        \item We say two pairs $(\mathcal{T},\mathcal{P})$ and $(\mathcal{T}',\mathcal{P}')$ are $\emph{equivalent}$, denoted by $(\mathcal{T},\mathcal{P})\simeq(\mathcal{T}',\mathcal{P}')$, if $\mathcal{T}$ and $\mathcal{T}'$ are equivalent as categories, and $\mathcal{P}$ and $\mathcal{P}'$ are equivalent as categories. Otherwise, they are $\emph{non-equivalent}$, denoted by $(\mathcal{T},\mathcal{P})\nsimeq(\mathcal{T}',\mathcal{P}')$ .
        \item We write $(\mathcal{T}',\mathcal{P}')\preceq(\mathcal{T},\mathcal{P})$ if $\mathcal{T}'\subseteq\mathcal{T}$ and $\mathcal{P}'\subseteq\mathcal{P}$.
    \end{enumerate}
\end{defn}

We will use $s(G,\tau_{\mathcal{B}})\,\tilt$ to denote the set of all support $(G,\tau_{\mathcal{B}})$-tilting pair (up to equivalence).

\begin{rmk}
\label{pair of subcategories instead of pair of mod}
    Note that we can also interpret the elements of $s\tau_{\mathcal{A}}\tilt$ as a pair of subcategories of $\mathcal{A}\mod$, instead of just a pair of $\mathcal{A}$-modules. In other words, given a basic support $\tau_{\mathcal{A}}$-tilting pair $(T,P)$, we consider the pair $(\mathsf{add}(T),\mathsf{add}(P))$.
\end{rmk}

Using above notions we can mutate at the level of $G$-orbits.

\begin{defn}
\phantomsection\label{(G,tau)mut}
    Let $(\mathcal{T}_1,\mathcal{P}_1),(\mathcal{T}_2,\mathcal{P}_2)$ be two non-equivalent support $(G,\tau_{\mathcal{B}})$-tilting pairs. They are said to be $\emph{$(G,\tau_{\mathcal{B}})$-mutations}$ of each other if there exists an almost complete support $(G,\tau_{\mathcal{B}})$-tilting pair $(\mathcal{T},\mathcal{Q})$ such that $(\mathcal{T},\mathcal{Q})\preceq(\mathcal{T}_1,\mathcal{P}_1)$ and $(\mathcal{T},\mathcal{Q})\preceq(\mathcal{T}_2,\mathcal{P}_2)$.

    If $X\in\mathcal{B}\ind$ satisfies either $\mathcal{T}_1=\mathsf{add}_{G}(\mathcal{T},X)$ and $\mathcal{P}_1 = \mathcal{Q}$; or $\mathcal{P}_1=\mathsf{add}_{G}(\mathcal{Q},X)$ and $\mathcal{T}_1 = \mathcal{T}$, then we write $\mu_{\mathcal{O}(X)}(\mathcal{T}_1)=\mathcal{T}_2$ to denote the mutation of $(\mathcal{T}_1,\mathcal{P}_1)$ at the $G$-orbit $\mathcal{O}(X)$.
\end{defn}

\section{Mutations via the push-down functor}

The proposition below shows that the above notion of $(G,\tau_{\mathcal{B}})$-rigid in $\mathcal{B}\mod$ agrees with the notion of $\tau_{\mathcal{A}}$-rigid in $\mathcal{A}\mod$, via the push-down functor.

\begin{prop}
\phantomsection\label{NoHom}
    Let $M,N\in\mathcal{B}\mod$. Then
    \[
        \Hom_{\mathcal{A}}(\F_{\lambda}M\,,\, \tau_{\mathcal{A}}\F_{\lambda}N)=0\;\text{if and only if}\; \Hom_{\mathcal{B}}(M\,,\, \tau_{\mathcal{B}}N^{g})=0,\;\text{for every}\;g\in G.
    \]
\end{prop}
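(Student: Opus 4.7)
The plan is to reduce the statement to a chain of natural isomorphisms built from three ingredients: the compatibility of $\F_\lambda$ with the Auslander--Reiten translate, the adjunction $(\F_\lambda, \F_\bullet)$, and the formula $\F_\bullet \F_\lambda \cong \bigoplus_{g\in G} \mathrm{Id}^{\,g}$ listed among the properties of the push-down functor in the preliminaries.

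First I would establish the key commutation $\tau_{\mathcal{A}}\F_\lambda N \cong \F_\lambda \tau_{\mathcal{B}} N$ for any $N \in \mathcal{B}\mod$. Starting from a minimal projective presentation $P_1 \to P_0 \to N \to 0$ in $\mathcal{B}\mod$, the fact that $\F_\lambda$ is exact and sends indecomposable projectives to indecomposable projectives (property (1) of $\F_\lambda$) means $\F_\lambda P_1 \to \F_\lambda P_0 \to \F_\lambda N \to 0$ is a projective presentation in $\mathcal{A}\mod$. Comparing the two defining right-exact sequences $\nu_{\mathcal{B}}(P_1) \to \nu_{\mathcal{B}}(P_0) \to \tau^{-1}_{\mathcal{B}} N \to 0$ and its $\mathcal{A}$-side counterpart, one checks using the adjunction that $\F_\lambda$ intertwines the Nakayama functors on the projective summands, and hence commutes with $\tau_{\mathcal{B}}$ up to natural isomorphism. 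I expect this step to be the main obstacle; it will require care because $\F_\lambda P_1 \to \F_\lambda P_0$ need not stay minimal, but minimality is immaterial for computing $\tau$ when one works with projective presentations modulo the image of projectives (equivalently, one can work with the classical transpose $\mathrm{Tr}$ and verify $\F_\lambda \circ \mathrm{Tr}_{\mathcal{B}} \cong \mathrm{Tr}_{\mathcal{A}} \circ \F_\lambda$ up to projective summands).

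Granted the commutation above, I would chain the following isomorphisms:
\begin{align*}
\Hom_{\mathcal{A}}(\F_\lambda M, \tau_{\mathcal{A}} \F_\lambda N)
&\cong \Hom_{\mathcal{A}}(\F_\lambda M, \F_\lambda \tau_{\mathcal{B}} N) \\
&\cong \Hom_{\mathcal{B}}(M, \F_\bullet \F_\lambda \tau_{\mathcal{B}} N) \\
&\cong \Hom_{\mathcal{B}}\!\Bigl(M, \bigoplus_{g \in G} (\tau_{\mathcal{B}} N)^{g}\Bigr) \\
&\cong \bigoplus_{g \in G} \Hom_{\mathcal{B}}(M, \tau_{\mathcal{B}}(N^{g})).
\end{align*}
The second line uses the adjunction $(\F_\lambda, \F_\bullet)$, the third uses property (4), and the final line uses that the $G$-action on $\mathcal{B}\mod$ is by $\mathbb{K}$-linear auto-equivalences that commute with $\tau_{\mathcal{B}}$ (a consequence of $g$ being an automorphism of the bound quiver $(\Gamma, J)$, hence preserving minimal projective presentations), together with the fact that $M$ is finite-dimensional so $\Hom$ distributes over the (possibly infinite) direct sum.

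Finally, from this identification, $\Hom_{\mathcal{A}}(\F_\lambda M, \tau_{\mathcal{A}} \F_\lambda N) = 0$ if and only if each summand $\Hom_{\mathcal{B}}(M, \tau_{\mathcal{B}} N^{g})$ vanishes, which is precisely the right-hand side of the proposition. This yields both directions simultaneously.
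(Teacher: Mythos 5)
Your chain of isomorphisms is exactly the paper's proof: commute $\tau$ past $\F_{\lambda}$, apply the adjunction $(\F_{\lambda},\F_{\bullet})$, use $\F_{\bullet}\F_{\lambda}\cong\bigoplus_{g\in G}\mathrm{Id}^{\,g}$, and distribute $\Hom_{\mathcal{B}}(M,-)$ over the direct sum using finite-dimensionality of $M$. The only divergence is the first step: the paper simply cites Gabriel (\cite{GabUnivCov}, Section 3.6) for $\tau_{\mathcal{A}}\F_{\lambda}N\cong\F_{\lambda}\tau_{\mathcal{B}}N$, whereas you sketch a re-derivation, and there your remark that ``minimality is immaterial'' is the one shaky point: working with $\mathrm{Tr}$ only up to projective summands determines $\tau$ only up to injective summands, which would not suffice for the if-and-only-if Hom-vanishing statement. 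The fix is that $\F_{\lambda}$ does preserve minimal projective presentations (it preserves radical morphisms, as the paper proves later and uses in its Lemma \ref{ComTr&PD}), so the commutation holds on the nose; alternatively, just cite Gabriel's theorem as the paper does.
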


\begin{proof}
    By \cite[Theorem, Section 3.6, page 89]{GabUnivCov}, we know that the Auslander-Reiten translate commutes with push-down functor. Hence, we get
    \[
        \Hom_{\mathcal{A}}(\F_{\lambda}M\,,\, \tau_{\mathcal{A}}\F_{\lambda}N)\cong \Hom_{\mathcal{A}}(\F_{\lambda}M\,,\, \F_{\lambda}\tau_{\mathcal{B}}N)
    \]
    Since $(\F_{\lambda},\F_{\bullet})$ is an adjoint pair, we get,
    \[
        \Hom_{\mathcal{A}}(\F_{\lambda}M\,,\, \F_{\lambda}\tau_{\mathcal{B}}N) \cong \Hom_{\mathcal{B}}(M\,,\, \F_{\bullet}\F_{\lambda}\tau_{\mathcal{B}}N)
         \cong \Hom_{\mathcal{B}}(M\,,\, \oplus_{g\in G}\,(\tau_{\mathcal{B}}N)^{g})
    \]
    Since $M$ is a finite dimensional $\mathcal{B}$-module and hence, a finitely generated $\mathcal{B}$-module, we have
    \[
        \Hom_{\mathcal{B}}(M\,,\, \oplus_{g\in G}(\tau_{\mathcal{B}}N)^{g}) \cong \bigoplus_{g\in G}\Hom_{\mathcal{B}}(M\,,\, (\tau_{\mathcal{B}}N)^{g})
    \]
    which is, indeed, isomorphic to $\bigoplus_{g\in G}\Hom_{\mathcal{B}}(M\,,\, \tau_{\mathcal{B}}N^{g})$. 
    
    Hence, we get
    \[
        \Hom_{\mathcal{A}}(\F_{\lambda}M\,,\, \tau_{\mathcal{A}}\F_{\lambda}N)\cong \bigoplus_{g\in G}\Hom_{\mathcal{B}}(M\,,\, \tau_{\mathcal{B}}N^{g})
    \]
    which gives the desired result.
\end{proof}

The previous proposition yields the following.

\begin{cor}
    A module $M\in\mathcal{B}\mod$ is $(G,\tau_{\mathcal{B}})$-rigid if and only if $\mathcal{F}_{\lambda}M$ is $\tau_{\mathcal{A}}$-rigid.
\end{cor}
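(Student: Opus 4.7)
The plan is to apply Proposition \ref{NoHom} with the choice $N = M$, which makes the corollary essentially immediate. By definition, $M\in\mathcal{B}\mod$ is $(G,\tau_{\mathcal{B}})$-rigid precisely when $\Hom_{\mathcal{B}}(M, \tau_{\mathcal{B}}M^{g}) = 0$ for every $g \in G$, and $\F_{\lambda}M$ is $\tau_{\mathcal{A}}$-rigid precisely when $\Hom_{\mathcal{A}}(\F_{\lambda}M, \tau_{\mathcal{A}}\F_{\lambda}M) = 0$. Specializing the biconditional of Proposition \ref{NoHom} to $N = M$ identifies these two conditions.

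Thus the entire argument is: ``$M$ is $(G,\tau_{\mathcal{B}})$-rigid $\iff$ $\Hom_{\mathcal{B}}(M, \tau_{\mathcal{B}}M^{g}) = 0$ for all $g \in G$ $\iff$ (by Proposition \ref{NoHom} with $N=M$) $\Hom_{\mathcal{A}}(\F_{\lambda}M, \tau_{\mathcal{A}}\F_{\lambda}M) = 0$ $\iff$ $\F_{\lambda}M$ is $\tau_{\mathcal{A}}$-rigid.''

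There is no real obstacle here, since every nontrivial ingredient (the commutation of $\tau$ with $\F_{\lambda}$, the adjunction $(\F_{\lambda}, \F_{\bullet})$, the decomposition $\F_{\bullet}\F_{\lambda} \cong \bigoplus_{g\in G} \mathrm{Id}^{g}$, and finite-generation of $M$ allowing $\Hom_{\mathcal{B}}(M, -)$ to commute with the direct sum) has already been used in the proof of Proposition \ref{NoHom}. The corollary is simply the diagonal case $N = M$ of that proposition.
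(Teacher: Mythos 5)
Your proof is correct and is exactly the intended argument: the corollary is the specialization $N=M$ of Proposition~\ref{NoHom}, combined with the definitions of $(G,\tau_{\mathcal{B}})$-rigid and $\tau_{\mathcal{A}}$-rigid, which is precisely why the paper states it as an immediate consequence with no further proof.
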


\begin{rmk} 
A $\tau_{\mathcal{B}}$-rigid module may fail to be sent to a $\tau_\mathcal{A}$-rigid module by the push-down functor. An example of this is when taking $\mathcal{A} = k[x]/\langle x^2\rangle$ the algebra of dual numbers, and take $\mathcal{B}$ given by the $A_\infty^\infty$ quiver which is a double-infinite path with all paths of length two in the ideal. The simple $\mathcal{B}$-modules are all $\tau_{\mathcal{B}}$-rigid while their image via the push down functor is the simple $\mathcal{A}$-module, which is not $\tau_\mathcal{A}$-rigid. We note that a simple $\mathcal{B}$-module, while being $\tau_{\mathcal{B}}$-rigid, is not $(G, \tau_{\mathcal{B}})$-rigid.
\end{rmk} 

We would like to understand how the push-down functor behaves with the notion of $(G,\tau_{\mathcal{B}})$-mutation of $G$-orbits and mutation of $\tau_{\mathcal{A}}$-tilting modules. Given a $\tau_{\mathcal{A}}$-tilting $\mathcal{A}$-module, we first consider a left mutation of it. We first recall some definitions and prove some technical results.

\begin{defn}
    Let $\mathcal{C}\subseteq \mathcal{A}\mod$ be a subcategory and $V$ an $\mathcal{A}$-module.
    \begin{enumerate}
        \item A morphism $f: V\rightarrow X$, with $X\in\mathcal{C}$, is called a $\emph{left $\mathcal{C}$ approximation}$ of $V$ if the map 
        \[
            \begin{array}{cccccc}
                 f_{*} & : & \Hom_{\mathcal{A}}(X,X') & \rightarrow & \Hom_{\mathcal{A}}(V,X') &  \\
                 & & h & \mapsto & h\circ f
            \end{array}
        \]
        is surjective.
        \item A morphism $f:V\rightarrow W$ in $\mathcal{A}\mod$ is called $\emph{left minimal}$ if, whenever there is a morphism $\phi:W\rightarrow W$ such that $f=\phi f$, then $\phi$ must be an isomorphism.
        \[
            \begin{tikzcd}
	           {V} && {W} \\
	           \\
	           && {W}
	           \arrow["{\phi}", from=1-3, to=3-3]
	           \arrow["f"', from=1-1, to=3-3]
	           \arrow["{f}", from=1-1, to=1-3]
            \end{tikzcd}
        \]
        Moreover, if $f$ is also a left $\mathcal{C}$ approximation of $V$, then we call $f$ a $\emph{minimal left $\mathcal{C}$}$ $\emph{approximation}$ of $V$.
    \end{enumerate}
\end{defn}

Below we record some well known facts about left minimal morphisms. Let $\mathcal{R}$ be any locally bounded $\mathbb{K}$-linear category.

\begin{lem}
    Let $f:V\rightarrow W$ be a morphism in $\mathcal{R}\mod$. The following statements are equivalent:
    \begin{enumerate}
        \item f is left minimal.
        \item If $f$ is isomorphic to
         \[
             \begin{tikzcd}
                V \arrow[r,"\spmat{f_1\\0}"] & W_1\oplus W_2
             \end{tikzcd}
         \]
        then $W_2=0$.
        \item $\mathrm{Coker}\,f:W\rightarrow \mathrm{coker}\,f$ is a radical map.
    \end{enumerate}
\end{lem}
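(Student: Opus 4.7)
The plan is to prove the cyclic chain of implications (1) $\Rightarrow$ (2) $\Rightarrow$ (3) $\Rightarrow$ (1). Two background facts will be used throughout: (i) $\mathcal{R}\mod$ satisfies Krull--Schmidt, since $\mathcal{R}$ is locally bounded, and (ii) for any $W \in \mathcal{R}\mod$, the endomorphism ring $\End(W)$ is a finite-dimensional $\mathbb{K}$-algebra, so its Jacobson radical equals the set of non-units and coincides with the morphisms whose indecomposable-to-indecomposable components are all non-isomorphisms; dually, a morphism $g\colon X\to Y$ is radical iff none of its components between matched Krull--Schmidt summands is an isomorphism, and radical maps absorb arbitrary compositions.

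For (1) $\Rightarrow$ (2), if $f$ is isomorphic to $\spmat{f_1\\0}\colon V \to W_1 \oplus W_2$, I would take $\phi = \spmat{1&0\\0&0} \in \End(W_1 \oplus W_2)$. Then $\phi f = f$, so by left minimality $\phi$ must be an automorphism, which happens if and only if $W_2 = 0$.

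For (2) $\Rightarrow$ (3), I will argue the contrapositive. Suppose $\pi := \mathrm{Coker}\,f$ is not radical. By the description of $\mathrm{rad}(W,\mathrm{coker}\,f)$ recalled above, there exist an indecomposable $Z$, a section $Z \hookrightarrow W$, and a retraction $\mathrm{coker}\,f \twoheadrightarrow Z$ whose composite with $\pi$ is an automorphism of $Z$. Relative to decompositions $W = Z \oplus W'$ and $\mathrm{coker}\,f = Z \oplus C'$, the morphism $\pi$ is thus a $2\times 2$ block with invertible $(1,1)$-entry, and standard row/column operations (right-multiplication by $\spmat{1&-\pi_{11}^{-1}\pi_{12}\\0&1}$ and left-multiplication by $\spmat{1&0\\-\pi_{21}\pi_{11}^{-1}&1}$, both automorphisms) make $\pi$ block-diagonal with an isomorphism in the $Z$-block. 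Consequently $\mathrm{Im}(f) = \mathrm{Ker}\,\pi \subseteq W'$, so up to isomorphism $f = \spmat{f'\\0}\colon V \to W' \oplus Z$ with $Z \neq 0$, contradicting (2).

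For (3) $\Rightarrow$ (1), suppose $\phi f = f$ for some $\phi \in \End(W)$. Then $(\phi - 1_W)f = 0$, so $\phi - 1_W$ factors through $\pi$ as $\phi - 1_W = \psi \pi$ for some $\psi \colon \mathrm{coker}\,f \to W$. Because $\pi$ is a radical map, the composite $\psi \pi$ lies in $\mathrm{rad}\,\End(W)$, and therefore $\phi = 1_W + \psi \pi$ is a unit, i.e., an isomorphism. Among the three implications, the step I expect to require the most care is the contrapositive argument in (2) $\Rightarrow$ (3), where the non-radicality of $\pi$ must be promoted to an honest split summand of $W$ that is disjoint from $\mathrm{Im}(f)$; this is exactly where Krull--Schmidt and the block-matrix manipulation carry the weight. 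The other two implications reduce to straightforward consequences of the characterization of $\mathrm{rad}\,\End(W)$ for finite-dimensional $W$.
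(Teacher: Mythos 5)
The paper does not actually prove this lemma --- it is recorded as a ``well known fact'' with no argument given --- so there is no internal proof to compare against; what matters is whether your argument is sound, and it is. Your cycle $(1)\Rightarrow(2)\Rightarrow(3)\Rightarrow(1)$ is the standard Krull--Schmidt argument (the one found in Auslander--Reiten--Smal\o-type references), and each step checks out: the idempotent $\spmat{1&0\\0&0}$ in $(1)\Rightarrow(2)$, the block-diagonalization of $\mathrm{Coker}\,f$ via automorphisms of $W$ and of $\operatorname{coker} f$ in $(2)\Rightarrow(3)$ (which correctly yields $\operatorname{Im} f=\ker\pi\subseteq W'$ and hence a nonzero split summand $Z$ of $W$ missed by $f$), and the factorization $\phi-1_W=\psi\pi$ through the cokernel in $(3)\Rightarrow(1)$, with $1_W+{}$(radical) invertible. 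Your use of the component characterization of radical maps is exactly the paper's Remark \ref{radicalnon-iso}, so the toolkit is consistent with the text. One small caution: your preliminary claim that the Jacobson radical of $\End(W)$ ``equals the set of non-units'' is false unless $\End(W)$ is local (a nontrivial idempotent is a non-unit outside the radical); fortunately you never use this --- the statements you actually invoke (radical elements are precisely those with all indecomposable-to-indecomposable components non-isomorphisms, the radical is an ideal, and $1+r$ is a unit for $r$ in the radical) are correct and suffice.
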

 
\begin{rmk}
\phantomsection\label{radicalnon-iso}
    If $\varphi:M\rightarrow N$ is a morphism in $\mathcal{R}\mod$, with $M=M_1\oplus M_2\oplus \dotsb\oplus M_k$ and $N=N_1\oplus N_2\oplus \dotsb\oplus N_l$ the decompositions of $M$ and $N$ into indecomposables, and if $\varphi$ is represented by the matrix $(\varphi_{ij})_{l\times k}$, then $\varphi$ is a radical map if and only if each $\varphi_{ij}$ is a non-isomorphism.
\end{rmk}

\begin{prop}
\phantomsection\label{leftapprx}
    Let $M,U, U'\in \mathcal{B}\mod$. If 
    \begin{tikzcd}
        M \arrow[r,"f"] & U'
    \end{tikzcd}
    is a left $\add_{G}(U)$ approximation of $M$, then
    \begin{tikzcd}
        \F_{\lambda} M \arrow[r,"\F_{\lambda}f"] & \F_{\lambda}U'
    \end{tikzcd}
    is a left $\add(\F_{\lambda}U)$ approximation of $\F_{\lambda} M$.
\end{prop}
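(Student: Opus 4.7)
The plan is to translate a left $\add(\F_{\lambda}U)$-approximation problem in $\mathcal{A}\mod$ into a left $\add_{G}(U)$-approximation problem in $\mathcal{B}\mod$ via the adjunction $(\F_{\lambda},\F_{\bullet})$, and then invoke the hypothesis that $f$ is a left $\add_{G}(U)$-approximation. The crucial input is the identification $\F_{\bullet}\F_{\lambda}\cong \bigoplus_{g\in G}(-)^{g}$ recorded among the basic properties of the push-down functor.

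Concretely, fix an arbitrary morphism $\phi:\F_{\lambda}M\to X$ with $X\in \add(\F_{\lambda}U)$. Since $G$ is torsion-free, $\F_{\lambda}$ sends indecomposables to indecomposables, and every object of $\add(\F_{\lambda}U)$ is isomorphic to $\F_{\lambda}V$ for some $V\in\add(U)$; so I may assume $X=\F_{\lambda}V$. The adjunction $(\F_{\lambda},\F_{\bullet})$ then converts $\phi$ into $\widetilde{\phi}:M\to \F_{\bullet}\F_{\lambda}V\cong \bigoplus_{g\in G}V^{g}$.

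The technical heart of the argument, and the step I expect to require the most care, is the finiteness claim that $\widetilde{\phi}$ factors through a finite subsum $\bigoplus_{g\in S}V^{g}$ for some finite $S\subseteq G$. Here I would use the finite-dimensionality of both $M$ and $V$ together with the freeness of the $G$-action on $\Gamma_{0}$: the module $V^{g}$ is supported on the translate $g\cdot\mathrm{supp}(V)$, and freeness forces each vertex $x\in\mathrm{supp}(M)$ to be hit by only finitely many $g$ for which $V^{g}(x)\neq 0$; summing over the finite set $\mathrm{supp}(M)$ shows that the component $\widetilde{\phi}_{g}:M\to V^{g}$ vanishes for all but finitely many $g\in G$. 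Because $V\in\add(U)$ implies $V^{g}\in\add_{G}(U)$ for every $g$, one has $\bigoplus_{g\in S}V^{g}\in\add_{G}(U)$.

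Finally, the hypothesis that $f$ is a left $\add_{G}(U)$-approximation produces $\psi:U'\to \bigoplus_{g\in S}V^{g}$ with $\widetilde{\phi}=\iota\psi f$, where $\iota:\bigoplus_{g\in S}V^{g}\hookrightarrow \F_{\bullet}\F_{\lambda}V$ is the canonical inclusion. Transporting back through the adjunction (apply $\F_{\lambda}$ and compose with the counit $\varepsilon_{\F_{\lambda}V}$) gives $\phi=(\varepsilon_{\F_{\lambda}V}\circ\F_{\lambda}\iota\circ\F_{\lambda}\psi)\circ \F_{\lambda}f$, so $\phi$ factors through $\F_{\lambda}f$. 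This is exactly the defining property of a left $\add(\F_{\lambda}U)$-approximation, completing the argument.
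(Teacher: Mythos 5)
Your proposal is correct and follows essentially the same route as the paper's proof: both use $\F_{\bullet}\F_{\lambda}\cong\bigoplus_{g\in G}(-)^{g}$ together with finite-dimensionality of $M$ to decompose the given map into finitely many components $M\to V^{g}$, note these targets lie in $\add_{G}(U)$, apply the left $\add_{G}(U)$-approximation property, and push the resulting factorization down with $\F_{\lambda}$. The only difference is presentational: the paper works with the explicit bijection $\bigoplus_{g\in G}\Hom_{\mathcal{B}}(M,(W')^{g})\cong\Hom_{\mathcal{A}}(\F_{\lambda}M,\F_{\lambda}W')$ and sums the pushed-down lifts, whereas you phrase the same computation via the unit/counit of the adjunction and a support argument for the finiteness of the subsum.
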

\begin{proof}
    We want to show that for any 
    \begin{tikzcd}
        \F_{\lambda}M \arrow[r,"p"] & L,
    \end{tikzcd}
    where $L\in\add(\F_{\lambda}U)$, there exists 
    \begin{tikzcd}
        \F_{\lambda}U' \arrow[r,"h"] & L
    \end{tikzcd}
    such that the following diagram commutes
        \begin{equation}
        \label{eqn1}
            \begin{tikzcd}
	           {\F_{\lambda}M} && {\F_{\lambda}U'} \\
	           \\
	           && {L}
	           \arrow["h", dotted, from=1-3, to=3-3]
	           \arrow["p"', from=1-1, to=3-3]
	           \arrow["{\F_{\lambda}f}", from=1-1, to=1-3]
            \end{tikzcd}
        \end{equation}
    We can assume that $L=\F_{\lambda}W'$ for some $W'\in\add_{G}(U)$. Recall that
    
        \begin{align*}
            \bigoplus_{g\in G}\mathrm{Hom}_{\mathcal{B}}\left(M,(W')^{g}\right) & \cong  \mathrm{Hom}_{\mathcal{A}}\left(\F_{\lambda}M,\F_{\lambda}W'\right)\\
            \left(p_{g}\right)_{g\in G} & \mapsto \sum\F_{\lambda}p_{g}.
        \end{align*}
    
    Suppose $$p=\F_{\lambda}p_{g_1}+\F_{\lambda}p_{g_2}+\dotsc+\F_{\lambda}p_{g_\ell}.$$   Since $W'\in\add_{G}(U)$, we have $(W')^{g_1},(W')^{g_2},\dotsc,(W')^{g_\ell}\in\add_{G}(U)$. Using the fact that
    \begin{tikzcd}
        M \arrow[r,"f"] & U'
    \end{tikzcd}
    is a left $\add_{G}(U)$ approximation, there exists
    \begin{tikzcd}
        U' \arrow[r,"H"] & \bigoplus_{i=1}^{\ell}(W')^{g_i}
    \end{tikzcd}
    such that we get the following commutative diagram
    \[
        \begin{tikzcd}
	       M && U' \\
	       \\
	       \\
	       && {\bigoplus_{i=1}^{\ell}(W')^{g_i}}
	       \arrow["f", from=1-1, to=1-3]
	       \arrow["{\spmat{p_{g_1}\\p_{g_2}\\\vdots\\p_{g_{\ell}}}}"'{pos=0.4}, from=1-1, to=4-3]
	       \arrow["{H=\spmat{H_1\\H_2\\\vdots\\H_{\ell}}}", dotted, from=1-3, to=4-3]
        \end{tikzcd}
    \]
    Now we apply the push-down functor to the above commutative diagram to get the following commutative diagram
    \[
        \begin{tikzcd}
	       {\F_{\lambda}M} && {\F_{\lambda}U'} \\
	       \\
	       \\
	       && {L^{\oplus\ell}}
	       \arrow["{\F_{\lambda}f}", from=1-1, to=1-3]
	       \arrow["{\spmat{\F_{\lambda}p_{g_1}\\\F_{\lambda}p_{g_2}\\\vdots\\\F_{\lambda}p_{g_{\ell}}}}"'{pos=0.4}, from=1-1, to=4-3]
	       \arrow["{H=\spmat{\F_{\lambda}H_1\\\F_{\lambda}H_2\\\vdots\\\F_{\lambda}H_{\ell}}}", dotted, from=1-3, to=4-3]
        \end{tikzcd}
    \]
    By the commutativity of the above diagram we get that
    \[
        p=\F_{\lambda}p_{g_1}+\F_{\lambda}p_{g_2}+\dotsc+\F_{\lambda}p_{g_\ell}=\left(\F_{\lambda}H_1+\F_{\lambda}H_2+\dotsc+\F_{\lambda}H_\ell\right)\F_{\lambda}f
    \]
    Hence, $h\coloneqq\F_{\lambda}H_1+\F_{\lambda}H_2+\dotsc+\F_{\lambda}H_\ell$ is the desired $h$ in (\ref{eqn1}).
\end{proof}

\begin{lem}
\phantomsection\label{gRKS}
    Let $M_1,M_2,\dotsc,M_{l}\in\mathcal{B}\ind$ and $L\in\mathcal{B}\mod$. If $\bigoplus_{k=1}^{l}\F_{\lambda}M_k\cong\F_{\lambda} L$, then $L\cong \bigoplus_{k=1}^{l}M_{k}^{\,g_{k}}$, where $g_{k}\in G,1\leq k\leq l$.
\end{lem}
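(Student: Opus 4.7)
The plan is to use the Krull-Schmidt theorem in $\mathcal{A}\mod$ together with two properties of the push-down functor listed earlier in the preliminaries: that $\F_{\lambda}$ preserves indecomposables (since $G$ is torsion-free), and that two $\mathcal{B}$-modules with isomorphic push-down to an indecomposable $\mathcal{A}$-module differ by a translate of $G$.

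First, I would decompose $L$ into indecomposables in $\mathcal{B}\mod$, say $L \cong \bigoplus_{j=1}^{m} L_j$ with $L_j \in \mathcal{B}\ind$. Applying $\F_{\lambda}$ and using that $\F_{\lambda}$ is additive gives $\F_{\lambda} L \cong \bigoplus_{j=1}^{m} \F_{\lambda} L_j$. Because $G$ is torsion-free (as stipulated in the paper's Conventions subsection), each $\F_{\lambda} L_j$ and each $\F_{\lambda} M_k$ is indecomposable in $\mathcal{A}\mod$. Hence the hypothesis $\bigoplus_{k=1}^{l} \F_{\lambda} M_k \cong \F_{\lambda} L$ provides two decompositions of the same finite-dimensional $\mathcal{A}$-module into indecomposables.

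Next, I would invoke the Krull-Schmidt theorem in $\mathcal{A}\mod$ (valid since $\mathcal{A}$ is a finite-dimensional $\mathbb{K}$-algebra, so every object of $\mathcal{A}\mod$ has local endomorphism rings on its indecomposable summands). This forces $m = l$ and yields a permutation $\sigma$ of $\{1,\dotsc,l\}$ such that $\F_{\lambda} L_{\sigma(k)} \cong \F_{\lambda} M_k$ for every $k$. Relabeling the indecomposable summands of $L$, I may assume $\F_{\lambda} L_k \cong \F_{\lambda} M_k$.

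Finally, for each $k$, I would apply the property (item~(6) in the list preceding the $\tau$-tilting subsection) stating that if $N \in \mathcal{A}\ind$ and $M',M'' \in \mathcal{B}\MOD$ satisfy $\F_{\lambda} M' \cong N \cong \F_{\lambda} M''$, then $M' \cong (M'')^{g}$ for some $g \in G$. This produces $g_k \in G$ with $L_k \cong M_k^{\,g_k}$, giving the desired decomposition $L \cong \bigoplus_{k=1}^{l} M_k^{\,g_k}$. There is no real obstacle here; the only point requiring mild care is to confirm that the indecomposable matching step uses nothing beyond ordinary Krull-Schmidt in $\mathcal{A}\mod$, and that the two earlier-cited properties of $\F_{\lambda}$ may be invoked verbatim.
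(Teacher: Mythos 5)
Your proposal is correct and follows essentially the same route as the paper: decompose $L$ into indecomposables, use torsion-freeness of $G$ so that $\F_{\lambda}$ preserves indecomposability, apply Krull--Remak--Schmidt in $\mathcal{A}\mod$ to match summands, and then conclude $L_k\cong M_k^{\,g_k}$ via the fiber property of the push-down functor. The only difference is cosmetic: you cite property (6) from the preliminaries explicitly, where the paper invokes it implicitly in its final step.
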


\begin{proof}
    Let $L\cong L_{1}\oplus L_{2}\oplus\dotsb\oplus L_{r}$ with each $L_{t}\in \mathcal{B}\ind$ be a decomposition of $L$ into indecomposable $\mathcal{B}$-modules. Therefore, we get
    \[
        \bigoplus_{t=1}^{r}\F_{\lambda}L_{t}\cong\bigoplus_{k=1}^{l}\F_{\lambda}M_k
    \]
    Since $G$ is torsion-free, $\F_{\lambda}$ sends a finite-dimensional indecomposable $\mathcal{B}$-module to a finite-dimensional indecomposable $\mathcal{A}$-module. By the Krull-Remak-Schmidt theorem, $l=r$ and, without loss of generality, we have $\F_{\lambda}L_{k}\cong\F_{\lambda}M_{k}$, for all $1\leq k\leq l$. Hence, we conclude each $L_k\cong M_{k}^{\,g_{k}}$, for some $g_{k}\in G$ and for all $1\leq k\leq l$. Therefore, $L\cong \bigoplus_{k=1}^{l}M_{k}^{\,g_{k}}$.
\end{proof}

\begin{lem}
\phantomsection\label{non-isotonon-iso}
    Let $\psi:U\rightarrow V$ be a morphism in $\mathcal{B}\mod$. If $\psi$ is a non-isomorphism in $\mathcal{B}\mod$, then $\F_{\lambda}\psi:\F_{\lambda}U\rightarrow \F_{\lambda}V$ is also a non-isomorphism in $\mathcal{A}\mod$.
\end{lem}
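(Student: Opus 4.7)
My plan is to establish the contrapositive: assuming $\F_{\lambda}\psi$ is an isomorphism in $\mathcal{A}\mod$, I will deduce that $\psi$ is already an isomorphism in $\mathcal{B}\mod$.

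The key observation is that $\F_{\lambda}$ acts block-diagonally on morphisms after evaluation at each object. Concretely, for every $x \in Q_0$ the defining formula of the push-down on a morphism gives
\[
(\F_{\lambda}\psi)_x \;=\; \bigoplus_{\hat x / x} \psi_{\hat x} \;:\; \bigoplus_{\hat x / x} U(\hat x) \;\longrightarrow\; \bigoplus_{\hat x / x} V(\hat x),
\]
so $(\F_{\lambda}\psi)_x$ is a block-diagonal $\mathbb{K}$-linear map whose blocks are precisely the components $\psi_{\hat x}$ for $\hat x$ ranging over the fiber $p^{-1}(x)$. If $\F_{\lambda}\psi$ is an isomorphism, then each $(\F_{\lambda}\psi)_x$ is an isomorphism of finite-dimensional vector spaces, and a block-diagonal linear map is an isomorphism if and only if each of its blocks is. Consequently every $\psi_{\hat x}$ is an isomorphism, and since every $\hat x \in \Gamma_0$ sits above some $x \in Q_0$, this forces $\psi$ itself to be an isomorphism in $\mathcal{B}\mod$, contradicting the hypothesis.

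I do not anticipate any substantial obstacle here; the argument reduces entirely to the componentwise description of $\F_{\lambda}$ on morphisms together with the elementary direct-sum fact recalled above. A slightly more categorical variant, which I could equally well use, is to apply $\F_{\bullet}$ to $\F_{\lambda}\psi$, use that functors preserve isomorphisms to conclude $\F_{\bullet}\F_{\lambda}\psi$ is an isomorphism in $\mathcal{B}\MOD$, and then invoke the natural isomorphism $\F_{\bullet}\F_{\lambda}\cong\bigoplus_{g\in G}\mathrm{Id}_{\mathcal{B}\MOD}^{\,g}$ listed among the properties of the push-down to identify $\F_{\bullet}\F_{\lambda}\psi$ with the diagonal morphism $\bigoplus_{g\in G}\psi^{g}$; the same elementary fact then ensures each $\psi^{g}$, in particular $\psi = \psi^{e}$, is an isomorphism.
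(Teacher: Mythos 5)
Your argument is correct, but it takes a different route from the paper. The paper argues directly: if $\psi$ is not an isomorphism then $\ker\psi\neq 0$ or $\operatorname{coker}\psi\neq 0$, and since $\F_{\lambda}$ is exact and sends nonzero modules to nonzero modules (its value at $x$ being $\bigoplus_{\hat x/x}$ of the values over the fiber), the same holds for $\F_{\lambda}\psi$, so it cannot be an isomorphism. You instead prove the contrapositive by unwinding the vertexwise formula $(\F_{\lambda}\psi)_x=\bigoplus_{\hat x/x}\psi_{\hat x}$ and using that a direct sum of linear maps is invertible exactly when each summand is; this is sound (the fibers may be infinite, but only finitely many summands are nonzero since $U,V$ are finite dimensional, and in any case the direct-sum criterion holds), and your second variant via $\F_{\bullet}\F_{\lambda}\cong\bigoplus_{g\in G}\mathrm{Id}^{\,g}$ is arguably the cleanest, since it only invokes a property explicitly listed in the paper and shows that $\F_{\lambda}$ reflects isomorphisms for purely formal reasons, without needing exactness. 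What the paper's route buys is brevity given that exactness of $\F_{\lambda}$ is already on record, and it sets up the cokernel language reused immediately afterwards in Lemma \ref{leftmin}; what your route buys is a more elementary, self-contained argument (and, in the adjunction form, one that works verbatim for any Galois covering functor). Incidentally, the paper's own proof ends with a slip --- ``which yields that $\F_{\lambda}\psi$ is an isomorphism'' should read ``is \emph{not} an isomorphism'' --- which your write-up avoids.
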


\begin{proof}
    Assume that $\psi$ is a non-isomorphism in $\mathcal{B}\mod$. Then either $\mathrm{ker}\,\psi\neq0$ or $\mathrm{coker}\,\psi\neq0$. Therefore, we have one the following two exact sequences
    \[
        \begin{tikzcd}
            0\arrow[r] & \mathrm{ker}\,\psi \arrow[r] & U \arrow[r,"{\psi}"] & V
        \end{tikzcd}
        \;\;\;\;\;\text{and}\;\;\;\;\;
        \begin{tikzcd}
            U \arrow[r,"{\psi}"] & V \arrow[r] & \mathrm{coker}\,\psi \arrow[r] & 0
        \end{tikzcd}
    \]
    Now we apply the push-down functor $\F_{\lambda}$ to the above exact sequences to get that
    \[
        \text{either}\;\;0\neq\mathrm{ker}\,F_{\lambda}\psi=\F_{\lambda}\mathrm{ker}\,\psi\;\;\;\text{or}\;\;\;0\neq\mathrm{coker}\,\F_{\lambda}\psi=\F_{\lambda}\mathrm{coker}\,\psi
    \]
    which yields that $\F_{\lambda}\psi$ is an isomorphism in $\mathcal{A}\mod$.
\end{proof}

\begin{cor}
\phantomsection\label{non-radtonon-rad}
    Let $\phi:U\rightarrow V$ be a morphism in $\mathcal{B}\mod$. If $\phi$ is a radical morphism in $\mathcal{B}\mod$, then $\F_{\lambda}\phi:\F_{\lambda}U\rightarrow \F_{\lambda}V$ is also a radical morphism in $\mathcal{A}\mod$.
\end{cor}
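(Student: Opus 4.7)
The plan is to reduce to the indecomposable-component characterization recorded in Remark \ref{radicalnon-iso} and then invoke Lemma \ref{non-isotonon-iso} componentwise. First I would decompose $U$ and $V$ into indecomposables in $\mathcal{B}\mod$, say $U\cong\bigoplus_{i=1}^{k}U_i$ and $V\cong\bigoplus_{j=1}^{l}V_j$, and write $\phi$ as a matrix $(\phi_{ij})$ with $\phi_{ij}:U_i\to V_j$. Because $\phi$ is a radical morphism, Remark \ref{radicalnon-iso} tells us that every entry $\phi_{ij}$ is a non-isomorphism in $\mathcal{B}\mod$.

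Next I would push everything down. Since $\F_{\lambda}$ is additive, $\F_{\lambda}U\cong\bigoplus_{i=1}^{k}\F_{\lambda}U_i$ and $\F_{\lambda}V\cong\bigoplus_{j=1}^{l}\F_{\lambda}V_j$, and because $G$ is torsion-free the modules $\F_{\lambda}U_i$ and $\F_{\lambda}V_j$ are all indecomposable in $\mathcal{A}\mod$. With respect to these decompositions, $\F_{\lambda}\phi$ is represented by the matrix $(\F_{\lambda}\phi_{ij})$.

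Finally, I would apply Lemma \ref{non-isotonon-iso} to each entry: the non-isomorphism $\phi_{ij}:U_i\to V_j$ gives a non-isomorphism $\F_{\lambda}\phi_{ij}:\F_{\lambda}U_i\to\F_{\lambda}V_j$ between indecomposable $\mathcal{A}$-modules. Invoking the other direction of Remark \ref{radicalnon-iso} then yields that $\F_{\lambda}\phi$ is a radical morphism in $\mathcal{A}\mod$, as required.

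There is essentially no real obstacle here, since all the work has been done in the preceding results. The only subtle point worth mentioning is that two non-isomorphic indecomposables $U_i\not\cong U_j$ in $\mathcal{B}\mod$ may have isomorphic images $\F_{\lambda}U_i\cong\F_{\lambda}U_j$ (differing by a $G$-translate), but this does not affect the argument: the matrix characterization of radical morphisms only requires each entry between indecomposable summands to be a non-isomorphism, regardless of whether source and target indecomposables happen to be isomorphic.
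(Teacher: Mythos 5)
Your argument is correct and is exactly the paper's proof, which simply cites Remark \ref{radicalnon-iso} and Lemma \ref{non-isotonon-iso}; you have merely written out the componentwise reduction (including the harmless observation about $G$-translates) that the paper leaves implicit.
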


\begin{proof}
    This follows from Remark \ref{radicalnon-iso} and the above Lemma \ref{non-isotonon-iso}.
\end{proof}

Let $T\coloneqq T_1\oplus T_2\oplus\dotsb\oplus T_i\oplus\dotsb\oplus T_n\in \mathcal{A}\mod$ be a basic $\tau_{\mathcal{A}}$-tilting module, and suppose $\mu_{T_i}^{-}(T)=T'\coloneqq T_1\oplus T_2\oplus\dotsb\oplus T_i'\oplus\dotsb\oplus T_n$, where $T'_{i}$ could be zero. Further assume that $T\in\mathcal{A}\mod_1$. Hence, there is a $\widehat{T}_{j}\in\mathcal{B}\mod$ such that $\F_{\lambda}\widehat{T}_{j}\cong T_{j}$ for every $j\in\lbrace1,2,\dotsc,n\rbrace$. The next few results will be using these notations.

\begin{lem}
\phantomsection\label{leftmin}
    If
    \begin{tikzcd}
        M \arrow[r,"f"] & N
    \end{tikzcd}
    is a left minimal morphism in $\mathcal{B}\mod$, then
    \begin{tikzcd}
        \F_{\lambda}M \arrow[r,"\F_{\lambda}f"] & \F_{\lambda}N
    \end{tikzcd}
    is also a left minimal morphism in $\mathcal{A}\mod$.
\end{lem}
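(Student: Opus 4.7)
The plan is to exploit the characterization of left minimality in terms of radical maps given by the lemma just stated, namely that $f \colon M \to N$ is left minimal if and only if the canonical map $\mathrm{Coker}\,f \colon N \to \mathrm{coker}\,f$ is a radical morphism. This reduces the question to transferring the ``radical'' property across the push-down functor, which has already been done in Corollary \ref{non-radtonon-rad}.

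First, I would apply the cited characterization to the hypothesis: since $f$ is left minimal in $\mathcal{B}\mod$, the canonical map $\pi \colon N \to \mathrm{coker}\,f$ is a radical morphism in $\mathcal{B}\mod$. Next, I would use that $\F_{\lambda}$ is exact (as recorded in the list of properties of $\F_{\lambda}$ and $\F_{\bullet}$) to identify $\mathrm{coker}(\F_{\lambda}f)$ with $\F_{\lambda}(\mathrm{coker}\,f)$ and, under this identification, to identify the canonical projection $\F_{\lambda}N \to \mathrm{coker}(\F_{\lambda}f)$ with $\F_{\lambda}\pi$.

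Then I would invoke Corollary \ref{non-radtonon-rad}: since $\pi$ is a radical morphism in $\mathcal{B}\mod$, its image $\F_{\lambda}\pi$ is a radical morphism in $\mathcal{A}\mod$. Therefore $\mathrm{Coker}(\F_{\lambda}f)$ is radical, and applying the equivalent characterization of left minimality once more (this time in $\mathcal{A}\mod$) gives that $\F_{\lambda}f$ is left minimal in $\mathcal{A}\mod$.

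There is essentially no obstacle here, provided one trusts Corollary \ref{non-radtonon-rad} and the exactness of $\F_{\lambda}$; the only small point to verify carefully is the naturality statement that $\F_{\lambda}$ applied to the canonical cokernel projection is the canonical cokernel projection of $\F_{\lambda}f$, which is immediate from exactness since $\F_{\lambda}$ preserves cokernels (indeed $\F_{\lambda}$ is a left adjoint and right exact, and the right exact sequence $M \xrightarrow{f} N \xrightarrow{\pi} \mathrm{coker}\,f \to 0$ is sent to the right exact sequence $\F_{\lambda}M \xrightarrow{\F_{\lambda}f} \F_{\lambda}N \xrightarrow{\F_{\lambda}\pi} \F_{\lambda}(\mathrm{coker}\,f) \to 0$).
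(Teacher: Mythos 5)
Your proposal is correct and follows essentially the same route as the paper's proof: both use the radical-map characterization of left minimality, the exactness (right exactness suffices) of $\F_{\lambda}$ to identify $\mathrm{coker}(\F_{\lambda}f)$ with $\F_{\lambda}(\mathrm{coker}\,f)$, and the fact that $\F_{\lambda}$ preserves radical morphisms. The only cosmetic difference is that you cite Corollary \ref{non-radtonon-rad} directly, while the paper invokes Remark \ref{radicalnon-iso} and Lemma \ref{non-isotonon-iso}, which together are exactly that corollary.
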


\begin{proof}
    We are given that
    \begin{equation*}
    \label{eqn2}
        \begin{tikzcd}
            M \arrow[r,"f"] & N
        \end{tikzcd}
    \end{equation*}
    is a left minimal morphism in $\mathcal{B}\mod$. Therefore, the cokernel map $\mathrm{Coker}f:N\rightarrow \mathrm{coker}\,f$ is a radical map. Since $\F_{\lambda}$ is an exact functor, we have $\mathrm{coker}\,\F_{\lambda}f=\F_{\lambda}\mathrm{coker}\,f$. Therefore, by using Remark \ref{radicalnon-iso} and Lemma \ref{non-isotonon-iso} we conclude that the map $\mathrm{Coker}\,\F_{\lambda}f:\F_{\lambda}N\rightarrow \mathrm{coker}\,\F_{\lambda}f$ is a radical map. Hence,
    \begin{tikzcd}
        \F_{\lambda}M \arrow[r,"\F_{\lambda}f"] & \F_{\lambda}N
    \end{tikzcd}
    is also a left minimal morphism in $\mathcal{A}\mod$. 
\end{proof}

Using the above notations, we define 
\[
    U\coloneqq \bigoplus_{\substack{j=1\\j\neq i}}^{n}T_j\;\;\text{and}\;\; \widehat{U}\coloneqq \bigoplus_{\substack{j=1\\j\neq i}}^{n}\,\widehat{T}_{j}
\]

\begin{cor}
\phantomsection\label{minleftapprox}
    For $g\in G$, if 
    \begin{tikzcd}
        \widehat{T}_{i}^{\;g} \arrow[r,"f"] & N
    \end{tikzcd}
    is a minimal left $\add_{G}(\widehat{U})$ approximation  of $ \widehat{T}_{i}^{\;g}$ in $\mathcal{B}\mod$, then
    \begin{tikzcd}
        T_i \arrow[r,"\F_{\lambda}f"] & \F_{\lambda}N
    \end{tikzcd}
    is a minimal left $\add(U)$ approximation of $T_i$ in $\mathcal{A}\mod$.
\end{cor}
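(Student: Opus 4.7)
The plan is to obtain this corollary as an immediate combination of the two preceding results: Proposition \ref{leftapprx}, which handles the approximation property, and Lemma \ref{leftmin}, which handles left minimality. The only bookkeeping needed is to verify that the source object maps correctly, namely $\F_{\lambda}\widehat{T}_{i}^{\;g} \cong T_i$, and that $\add(\F_{\lambda}\widehat{U}) = \add(U)$.

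First I would invoke the general fact that for any $M \in \mathcal{B}\mod$ and any $g \in G$ one has $\F_{\lambda}M^{g} \cong \F_{\lambda}M$. This follows directly from the definition of the push-down functor, since the direct sum over the fiber $\hat{x}/x$ is permuted by the $G$-action, and it is also implicit in property (6) of the numbered list in the preliminaries. Applied to $M = \widehat{T}_i$, this yields $\F_{\lambda}\widehat{T}_{i}^{\;g} \cong \F_{\lambda}\widehat{T}_i \cong T_i$. Likewise, $\F_{\lambda}\widehat{U} \cong U$ by construction of $\widehat{U}$, so $\add(\F_{\lambda}\widehat{U}) = \add(U)$.

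Next, I would apply Proposition \ref{leftapprx} with the roles of ``$M$'', ``$U$'' and ``$U'$'' in that proposition played by $\widehat{T}_{i}^{\;g}$, $\widehat{U}$, and $N$, respectively. Since by hypothesis $f : \widehat{T}_{i}^{\;g} \rightarrow N$ is a left $\add_{G}(\widehat{U})$ approximation, the proposition gives that $\F_{\lambda}f : T_i \rightarrow \F_{\lambda}N$ is a left $\add(U)$ approximation of $T_i$ in $\mathcal{A}\mod$. Then Lemma \ref{leftmin} applied to $f$ (which is left minimal by hypothesis) gives that $\F_{\lambda}f$ is left minimal in $\mathcal{A}\mod$. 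Combining these two conclusions yields exactly the required statement.

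I do not anticipate any real obstacle; the technical lemmas immediately above the corollary have been arranged precisely so that the two defining properties (left approximation and left minimality) transfer separately under $\F_{\lambda}$, and the corollary amounts to recording their combination. The only place one has to be slightly careful is in identifying $\F_{\lambda}\widehat{T}_{i}^{\;g}$ with $T_i$, but this is an essentially automatic consequence of the $G$-invariance of $\F_{\lambda}$ on objects.
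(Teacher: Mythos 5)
Your proposal is correct and follows exactly the paper's argument: the corollary is obtained by combining Proposition \ref{leftapprx} (transfer of the left approximation property) with Lemma \ref{leftmin} (transfer of left minimality), the identification $\F_{\lambda}\widehat{T}_{i}^{\;g}\cong T_i$ and $\add(\F_{\lambda}\widehat{U})=\add(U)$ being the routine bookkeeping the paper leaves implicit.
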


\begin{proof}
    This follows from Proposition \ref{leftapprx} and above Lemma \ref{leftmin}.
\end{proof}

The following proposition tells us that if $T\in \mathcal{A}\mod_{1}$, then $\mu_{T_i}^{-}(T)\in \mathcal{A}\mod_{1}$.

\begin{prop}
\label{LeftMutInImg}
    The indecomposable direct summand $T'_{i}$ lies in $\mathcal{A}\mod_{1}$.
\end{prop}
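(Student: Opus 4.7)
The plan is to construct an explicit preimage of $T'_i$ under the push-down functor, thereby exhibiting $T'_i$ as an object in the essential image of $\F_{\lambda}$. Recall from \cite{AIR} that in the left mutation $\mu^{-}_{T_i}(T) = T'$, the replacement $T'_i$, when nonzero, fits into an exchange sequence
$$T_i \xrightarrow{f} V \longrightarrow T'_i \longrightarrow 0$$
in $\mathcal{A}\mod$, where $f: T_i \to V$ is the minimal left $\mathsf{add}(U)$-approximation of $T_i$ and $T'_i$ is indecomposable. If $T'_i = 0$, there is nothing to prove, so I assume $T'_i \neq 0$.

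First I will lift this exchange sequence to $\mathcal{B}\mod$. To do so, I produce a minimal left $\mathsf{add}_{G}(\widehat{U})$-approximation $\hat{f}: \widehat{T}_i \to \widehat{V}$ of $\widehat{T}_i$ in $\mathcal{B}\mod$. The key observation enabling this construction is that $\widehat{T}_i$ is finite-dimensional and $\mathcal{B}$ is locally bounded, so the supports of the translates $\widehat{U}^{\,g}$ overlap the support of $\widehat{T}_i$ for only finitely many $g \in G$. Hence $\bigoplus_{g\in G}\Hom_{\mathcal{B}}(\widehat{T}_i, \widehat{U}^{\,g})$ is finite-dimensional, and a minimal left $\mathsf{add}_{G}(\widehat{U})$-approximation can be assembled from a basis of this finite-dimensional Hom space by the standard construction.

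Next, by Corollary \ref{minleftapprox}, the pushed-down morphism $\F_{\lambda}\hat{f}: T_i \to \F_{\lambda}\widehat{V}$ is itself a minimal left $\mathsf{add}(U)$-approximation of $T_i$ in $\mathcal{A}\mod$. Since minimal left approximations are unique up to isomorphism, I deduce $\F_{\lambda}\widehat{V} \cong V$. Now set $\widehat{T}'_i := \mathrm{coker}(\hat{f}) \in \mathcal{B}\mod$; by exactness of $\F_{\lambda}$,
$$\F_{\lambda}\widehat{T}'_i \;\cong\; \mathrm{coker}(\F_{\lambda}\hat{f}) \;\cong\; T'_i.$$
Since $T'_i$ is indecomposable, this shows every direct summand of $T'_i$ lies in the essential image of $\F_{\lambda}$. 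Hence $T'_i \in \mathcal{A}\mod_1$.

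The main obstacle I anticipate is the first step: establishing existence of the minimal left $\mathsf{add}_{G}(\widehat{U})$-approximation in $\mathcal{B}\mod$, since $\mathsf{add}_{G}(\widehat{U})$ typically contains infinitely many indecomposables. The Hom-finiteness argument outlined above should resolve this, but it depends essentially on the local boundedness of $\mathcal{B}$ and on the fact that only finitely many $G$-translates of $\widehat{U}$ have support overlapping $\widehat{T}_i$. A minor additional point to verify is that $\widehat{T}'_i$ is indeed finite-dimensional, which follows from the finite-dimensionality of $\widehat{V}$ and the right exactness of the cokernel construction.
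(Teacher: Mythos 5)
Your overall strategy is the same as the paper's: establish existence of a minimal left $\add_{G}(\widehat{U})$-approximation $\hat{f}$ of (a translate of) $\widehat{T}_i$ using that only finitely many indecomposables of $\add_{G}(\widehat{U})$ have support meeting the finite support of $\widehat{T}_i$, push it down via Corollary \ref{minleftapprox} to get a minimal left $\add(U)$-approximation of $T_i$, and use exactness of $\F_{\lambda}$ to compare cokernels. However, there is one step that fails as stated: you assert that the cokernel of the minimal left $\add(U)$-approximation of $T_i$ is $T'_i$ itself. What \cite[Theorem 2.30]{AIR} actually gives is that this cokernel is either zero or isomorphic to $(T'_i)^{\oplus m}$ for some $m\geq 1$; the multiplicity $m$ need not be $1$. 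Consequently, with $\widehat{T}'_i:=\mathrm{coker}(\hat{f})$ you may only conclude $\F_{\lambda}\widehat{T}'_i\cong (T'_i)^{\oplus m}$, not $\F_{\lambda}\widehat{T}'_i\cong T'_i$, so your final sentence does not yet show that $T'_i$ itself is a push-down.

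The repair is short and is exactly the paper's closing step: decompose $\mathrm{coker}(\hat{f})$ into finite-dimensional indecomposable $\mathcal{B}$-modules, apply $\F_{\lambda}$, and use the Krull--Remak--Schmidt theorem together with the fact that $\F_{\lambda}$ preserves indecomposables (as $G$ is torsion-free; compare Lemma \ref{gRKS}) to conclude that each indecomposable summand of $\mathrm{coker}(\hat{f})$ pushes down to a copy of $T'_i$; since $\mathrm{coker}(\hat{f})\neq 0$ when $T'_i\neq 0$, this puts $T'_i$ in $\mathcal{A}\mod_1$. Your existence argument for the minimal approximation and the use of Corollary \ref{minleftapprox} are fine (and the intermediate identification $\F_{\lambda}\widehat{V}\cong V$ via uniqueness of minimal approximations, while correct, is not needed once one quotes \cite[Theorem 2.30]{AIR} for the pushed-down approximation).
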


\begin{proof}
    If $T_{i}'=0$, then the statement is trivially true. Assume $T'_{i}\neq 0$. Observe that for every $x$ in $\Gamma$, there are only finitely many indecomposable modules in $\add_{G}(\widehat{U})$ that are supported at $x$. Hence, it is clear from this that minimal left (or right) $\add_{G}(\widehat{U})$ approximations of finite dimensional modules exist. Consider the minimal left $\add_{G}(\widehat{U})$ approximation 
    \begin{tikzcd}
        \widehat{T}_{i}^{\;g} \arrow[r,"f"] & N
    \end{tikzcd}
    as in the above Corollary \ref{minleftapprox}. Let $Y\coloneqq\mathrm{coker}(f)$. We get an exact sequence
    \begin{equation}
        \begin{tikzcd}
            \widehat{T}_{i}^{\;g} \arrow[r,"f"] & N\arrow[r] & Y\arrow[r] & 0.
        \end{tikzcd}
    \end{equation}
    Now we apply the push-down functor to the above exact sequence to get
    \begin{equation*}
        \begin{tikzcd}
            T_i \arrow[r,"\F_{\lambda}f"] & \F_{\lambda}N\arrow[r] & \F_{\lambda}Y\arrow[r] & 0.
        \end{tikzcd}
    \end{equation*}
    By Corollary \ref{minleftapprox}, we know that
    \begin{tikzcd}
        T_i \arrow[r,"\F_{\lambda}f"] & \F_{\lambda}N
    \end{tikzcd}
    is a minimal left $\add(U)$ approximation of $T_i$ in $\mathcal{A}\mod$. Hence, by using \cite[Theorem 2.30]{AIR}, we get that
    \begin{equation}
    \label{FYcongT'}
        \F_{\lambda} Y\cong (T'_{i})^{\oplus m},\;\;\text{for some}\;m\geq 1.
    \end{equation}
    Notice that since $T'_{i}\neq 0$, we have $Y\neq 0$. Now we decompose $Y$ into finite dimensional indecomposable $\mathcal{B}$-modules and then apply Krull-Remak-Schimdt theorem to the isomorphism in (\ref{FYcongT'}) to get the desired result.
\end{proof}

Notice that $\mathcal{T}_1\coloneqq\mathsf{add}_{G}(\widehat{U},\widehat{T}_i)$ is a $(G,\tau_{\mathcal{B}})$-tilting subcategory in $\mathcal{B}\mod$. Hence, we get a similar result as \cite[Theorem 2.30]{AIR}.

\begin{cor}
\phantomsection\label{LeftG-Mut}
    Consider the exact sequence
    \begin{tikzcd}
        \widehat{T}_{i}^{\;g} \arrow[r,"f"] & N\arrow[r] & Y\arrow[r] & 0
    \end{tikzcd}
    as in the previous Proposition \ref{LeftMutInImg}, where $f$ is a minimal left $\add_{G}(\widehat{U})$ approximation of $\widehat{T}_{i}^{\;g}$. Then, exactly one of the following is true:
    \begin{enumerate}
        \item $Y=0$, or
        
        \item There are $g_1, \ldots, g_k \in G$ with
        $Y\cong\bigoplus_{k=1}^{m}Z^{g_k}$ where $\mathcal{F}_\lambda Z \cong T_i'$. In this case, $\mu_{\mathcal{O}(\widehat{T}_{i})}(\mathcal{T}_{1})=\mathcal{T}_{2}\coloneqq\mathsf{add}_{G}(\widehat{U},Z)$ which is a $(G,\tau_{\mathcal{B}})$-tilting subcategory in $\mathcal{B}\mod$.
    \end{enumerate} 
\end{cor}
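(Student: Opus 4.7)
The plan is to split into the two cases and then verify the $(G,\tau_{\mathcal{B}})$-tilting assertion by pushing down to $\mathcal{A}\mod$ and invoking Proposition~\ref{NoHom} together with the Krull--Remak--Schmidt type lemma already established for $\F_\lambda$.

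\textbf{Case 1.} If $T'_i=0$, then by \cite[Theorem 2.30]{AIR} applied in $\mathcal{A}\mod$ we have $\F_\lambda Y\cong (T_i')^{\oplus m}=0$, and since $\F_\lambda$ is faithful on objects (it sends nonzero modules to nonzero modules by condition (3) in the list of properties of $\F_\lambda$), we conclude $Y=0$. This gives alternative (1).

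\textbf{Case 2.} If $T'_i\neq 0$, then $Y\neq 0$, and by Proposition~\ref{LeftMutInImg} we have $T'_i\in\mathcal{A}\mod_1$; pick $Z\in\mathcal{B}\mod$ with $\F_\lambda Z\cong T'_i$. Since $T'_i$ is indecomposable and $G$ is torsion-free, property (5) of the push-down functor forces $Z\in\mathcal{B}\ind$. From the identification $\F_\lambda Y\cong (T'_i)^{\oplus m}\cong \bigoplus_{k=1}^m\F_\lambda Z$ together with Lemma~\ref{gRKS}, we obtain $Y\cong\bigoplus_{k=1}^m Z^{g_k}$ for suitable $g_k\in G$.

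\textbf{Verifying $\mathcal{T}_2\coloneqq\add_G(\widehat{U},Z)$ is $(G,\tau_{\mathcal{B}})$-tilting.} By construction $\mathcal{T}_2$ is $G$-equivariant. To count orbits, note that the $G$-orbits of indecomposables in $\widehat{U}$ are in bijection with the indecomposable summands of $U$, contributing $n-1=\lvert\mathcal{A}\rvert-1$ orbits; adjoining the orbit $\mathcal{O}(Z)$ (which is genuinely new because $\F_\lambda Z\cong T'_i\not\cong T_j$ for $j\neq i$) gives $\lvert\mathcal{T}_2\rvert_G=\lvert\mathcal{A}\rvert$. For $\tau_{\mathcal{B}}$-rigidity, observe that for any two indecomposable summands $M,N$ appearing in $\mathcal{T}_2$, both $\F_\lambda M$ and $\F_\lambda N$ are direct summands of $T'=U\oplus T'_i$. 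Since $T'$ is $\tau_{\mathcal{A}}$-tilting, $\Hom_{\mathcal{A}}(\F_\lambda M,\tau_{\mathcal{A}}\F_\lambda N)=0$, and Proposition~\ref{NoHom} then yields $\Hom_{\mathcal{B}}(M,\tau_{\mathcal{B}}N^g)=0$ for every $g\in G$. Applied to all pairs of indecomposable summands (and their $G$-translates, which sit inside the same subcategory), this gives the $\tau_{\mathcal{B}}$-rigidity of $\mathcal{T}_2$.

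\textbf{Verifying the mutation relation.} Set $\mathcal{T}\coloneqq\add_G(\widehat{U})$; then $(\mathcal{T},0)$ is an almost complete support $(G,\tau_{\mathcal{B}})$-tilting pair (it is $G$-equivariant, $\tau_{\mathcal{B}}$-rigid by the same pushdown argument, and has $\lvert\mathcal{A}\rvert-1$ orbits), and $(\mathcal{T},0)\preceq(\mathcal{T}_1,0)$, $(\mathcal{T},0)\preceq(\mathcal{T}_2,0)$. Since $\mathcal{T}_1\nsimeq\mathcal{T}_2$ (their images under $\F_\lambda$ are $\add(T)$ and $\add(T')$, which differ), Definition~\ref{(G,tau)mut} gives $\mu_{\mathcal{O}(\widehat{T}_i)}(\mathcal{T}_1)=\mathcal{T}_2$, completing the argument.

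The main obstacle I anticipate is the $\tau_{\mathcal{B}}$-rigidity of $\mathcal{T}_2$: one must be careful that Proposition~\ref{NoHom} is applied to every pair of indecomposables in $\mathcal{T}_2$ (including $G$-translates $Z^g$ of $Z$), but because $\F_\lambda Z^g\cong \F_\lambda Z\cong T'_i$, the statement of Proposition~\ref{NoHom} handles all such translates simultaneously, so there is no extra content beyond one vanishing check in $\mathcal{A}\mod$.
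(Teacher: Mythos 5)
Your proposal is correct and takes essentially the route the paper intends for this corollary (which it leaves without a written proof): push down the approximation sequence and invoke \cite[Theorem 2.30]{AIR} as in Proposition \ref{LeftMutInImg}, use Lemma \ref{gRKS} to decompose $Y$ into $G$-translates of $Z$, and use Proposition \ref{NoHom} (plus the orbit count) to see that $\mathcal{T}_2$ is $(G,\tau_{\mathcal{B}})$-tilting and that the mutation relation holds. The only nitpick is that the implication $\F_{\lambda}Y=0\Rightarrow Y=0$ follows from $\F_{\lambda}$ preserving total dimension (immediate from its definition), not from property (3) on preservation of indecomposables, but this does not affect the argument.
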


Now we consider right mutations of $\tau_{\mathcal{A}}$-tilting $\mathcal{A}$-modules. We will follow the strategy mentioned in \cite[Remark 2.32]{AIR}. Hence, we consider the opposite categories $\mathcal{A}^{\,\rm op}$ and $\mathcal{B}^{\,\rm op}$ to prove some preliminary lemmas.

Given a covering functor $\F: \mathcal{B} \to \mathcal{A}$, there is an associated covering functor $\F^{\,\rm op}: \mathcal{B}^{\,\rm op} \to \mathcal{A}^{\,\rm op}$ given as follows: the functor $\F^{\,\rm op}$ acts on the objects of $\mathcal{B}^{\,\rm op}$ in the same way as $\F$. If $f: x \to y$ is a morphism in $\mathcal{B}$, then $\F^{\,\rm op}(f^{\,\rm op}) = (\F(f))^{\,\rm op}$. We let
$\F_{\lambda}^{\;\op}:\mathcal{B}^{\op}\MOD\rightarrow \mathcal{A}^{\op}\MOD$ be the push-down functor associated with $\F^{\,\rm op}$.

Recall that for any locally bounded $\mathbb{K}$-linear category $\mathcal{R}$, an indecomposable projective $\mathcal{R}$-module at $x\in\mathrm{obj}(\mathcal{R})$ is finite-dimensional. We have duality
\[
     (-)^{*}\coloneqq\Hom_{\mathcal{R}}(-,\mathcal{R}):\mathsf{proj}\,\mathcal{R}\rightarrow\mathsf{proj}\,\mathcal{R}^{\mathrm{op}}.
\]
Let $X\in\mathcal{R}\mod$ with a minimal projective presentation given by
\[
    \begin{tikzcd}
        P_{-1}\arrow[r,"f"]&    P_{0}\arrow[r]&X\arrow[r]&0.
    \end{tikzcd}
\]
The transpose functor $\mathrm{Tr}$ on $X$ is defined by the exact sequence
\[
    \begin{tikzcd}
        P^*_{0}\arrow[r,"f^*"]&    P^*_{-1}\arrow[r]&\mathrm{Tr}\,X\arrow[r]&0.
    \end{tikzcd}
\]

\begin{lem}
\phantomsection\label{comPD&*}
    Let $Q\in\mathsf{proj}\,\mathcal{B}$ be  indecomposable projective.
    Then $\F_{\lambda}^{\;\op}(Q)^{*}=(\F_{\lambda}Q)^{*}$.
\end{lem}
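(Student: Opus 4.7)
The plan is to reduce the equality to the defining property of the push-down functor on indecomposable projectives, using Yoneda's lemma to compute the duality $(-)^*$ explicitly.

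First, since $Q \in \mathsf{proj}\,\mathcal{B}$ is indecomposable projective, we may write $Q \cong P_{\hat{x}} = \mathcal{B}(\hat{x},-)$ for some object $\hat{x}$ of $\mathcal{B}$ (equivalently, a vertex of $\Gamma$), and set $x \coloneqq \F(\hat{x})$. By the defining condition of $\F_{\lambda}$ recalled in the preliminaries, we then have $\F_{\lambda} Q = \F_{\lambda} P_{\hat{x}} = P_{x} = \mathcal{A}(x,-)$. Applying Yoneda's lemma to the duality $(-)^{*} = \Hom_{\mathcal{A}}(-,\mathcal{A})$ on $\mathsf{proj}\,\mathcal{A}$, I obtain
\[
    (\F_{\lambda} Q)^{*} \;=\; \Hom_{\mathcal{A}}\bigl(\mathcal{A}(x,-),\mathcal{A}\bigr) \;\cong\; \mathcal{A}(-,x),
\]
which is the indecomposable projective at $x$ in $\mathsf{proj}\,\mathcal{A}^{\op}$.

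Next, I would compute the left-hand side by applying the same Yoneda argument first, at the level of $\mathcal{B}$: the duality $(-)^{*}:\mathsf{proj}\,\mathcal{B} \to \mathsf{proj}\,\mathcal{B}^{\op}$ sends $Q = P_{\hat{x}} = \mathcal{B}(\hat{x},-)$ to $Q^{*} \cong \mathcal{B}(-,\hat{x})$, the indecomposable projective at $\hat{x}$ in $\mathsf{proj}\,\mathcal{B}^{\op}$. Since $\F^{\,\op}:\mathcal{B}^{\op}\to\mathcal{A}^{\op}$ is the covering functor associated with $\F$ and agrees with $\F$ on objects, the defining property of the associated push-down functor $\F_{\lambda}^{\;\op}$ forces it to send the indecomposable projective at $\hat{x}$ in $\mathcal{B}^{\op}\MOD$ to the indecomposable projective at $x$ in $\mathcal{A}^{\op}\MOD$. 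Hence
\[
    \F_{\lambda}^{\;\op}(Q^{*}) \;=\; \F_{\lambda}^{\;\op}\bigl(\mathcal{B}(-,\hat{x})\bigr) \;=\; \mathcal{A}(-,x).
\]

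Comparing the two computations, both sides equal $\mathcal{A}(-,x)$, which proves the lemma. The only point requiring care is the identification of $P_{\hat{x}}^{*}$ with $\mathcal{B}(-,\hat{x})$ (and similarly for $P_{x}$); this is a standard consequence of the enriched Yoneda lemma applied to the bimodule $\mathcal{B}$, so no real obstacle arises. The argument is essentially formal once one notes that both $\F_{\lambda}$ and $\F_{\lambda}^{\;\op}$ are characterized by their behavior on indecomposable projectives at corresponding objects.
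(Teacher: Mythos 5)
Your proposal is correct and follows essentially the same route as the paper: identify $Q$ with the projective $P_{\hat{x}}$ at a vertex $\hat{x}$ lying over $x$, observe that $Q^{*}$ is the indecomposable projective at $\hat{x}$ in $\mathcal{B}^{\op}$ and $(\F_{\lambda}Q)^{*}=P_x^{*}$ the projective at $x$ in $\mathcal{A}^{\op}$, and conclude from the defining property of $\F_{\lambda}^{\;\op}$ on indecomposable projectives. Your explicit Yoneda computation of $(-)^{*}$ is just a more detailed justification of the step the paper states directly.
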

\begin{proof}
    Let $P$ be indecomposable projective in $\mathcal{A}\mod$ with $\mathcal{F}_\lambda Q = P$. There is a vertex $i$ in $Q$ with $P \cong P_i$, the projective module at vertex $i$. We see that there is a vertex $j$ in $\Gamma_0$ with $\mathcal{F}j = i$ such that $Q \cong \widehat{P}_j$, the projective module at vertex $j$. Recall that $P_{i}^{*}=\Hom_{\mathcal{A}}(P_i,\mathcal{A})$ and $(\widehat{P}_j)^{*}=\Hom_{\mathcal{B}}(\widehat{P}_j,\mathcal{B})$. Hence, $(\widehat{P}_j)^{*}$ is the projective at the vertex $j$ in $\mathcal{B}^{\op}$. Therefore, $\F_{\lambda}^{\;\op}(\widehat{P}_j)^{*}$ is the projective at vertex $j$ in $\mathcal{A}^{\op}$, i.e., $P_i^{*}=(\F_{\lambda}\widehat{P}_j)^{*}$.
\end{proof}

\begin{lem}
\label{ComTr&PD}
    Let $M\in \mathcal{A}\mod$. If $X\in \mathcal{B}\mod$ is such that $\F_{\lambda}X=M$, then $\F_{\lambda}^{\;\op}\tr X=\tr\F_{\lambda}X$.
\end{lem}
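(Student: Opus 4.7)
The plan is to verify that pushing down a minimal projective presentation of $X$ in $\mathcal{B}\mod$ produces a minimal projective presentation of $\F_{\lambda}X$ in $\mathcal{A}\mod$, and then to dualize in two different orders and compare. Fix a minimal projective presentation
\[
\begin{tikzcd}
P_{-1} \arrow[r,"f"] & P_{0} \arrow[r] & X \arrow[r] & 0
\end{tikzcd}
\]
in $\mathcal{B}\mod$. Since $\F_{\lambda}$ is exact and sends indecomposable projectives to indecomposable projectives, applying $\F_{\lambda}$ produces a projective presentation of $\F_{\lambda}X = M$. For minimality, the original map $f$ is a radical morphism, so by Corollary \ref{non-radtonon-rad} the push-down $\F_{\lambda}f$ is also a radical morphism. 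Hence
\[
\begin{tikzcd}
\F_{\lambda}P_{-1} \arrow[r,"{\F_{\lambda}f}"] & \F_{\lambda}P_{0} \arrow[r] & \F_{\lambda}X \arrow[r] & 0
\end{tikzcd}
\]
is a minimal projective presentation of $\F_{\lambda}X$, so $\tr \F_{\lambda}X$ is the cokernel of $(\F_{\lambda}f)^{*}:(\F_{\lambda}P_{0})^{*}\to(\F_{\lambda}P_{-1})^{*}$.

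Next, I would apply the duality $(-)^{*}$ directly to the original presentation to obtain
\[
\begin{tikzcd}
P_{0}^{*} \arrow[r,"{f^{*}}"] & P_{-1}^{*} \arrow[r] & \tr X \arrow[r] & 0
\end{tikzcd}
\]
in $\mathcal{B}^{\op}\mod$, and then apply the exact functor $\F_{\lambda}^{\;\op}$ to obtain
\[
\begin{tikzcd}
\F_{\lambda}^{\;\op} P_{0}^{*} \arrow[r,"{\F_{\lambda}^{\;\op}f^{*}}"] & \F_{\lambda}^{\;\op} P_{-1}^{*} \arrow[r] & \F_{\lambda}^{\;\op}\tr X \arrow[r] & 0.
\end{tikzcd}
\]
By Lemma \ref{comPD&*}, the two leftmost terms are canonically identified with $(\F_{\lambda}P_{0})^{*}$ and $(\F_{\lambda}P_{-1})^{*}$, respectively.

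The main step is then to verify that under these identifications the maps $\F_{\lambda}^{\;\op}(f^{*})$ and $(\F_{\lambda}f)^{*}$ agree. For this, I would reduce to the case where $P_{-1}=\widehat{P}_{j}$ and $P_{0}=\widehat{P}_{k}$ are indecomposable projectives. A morphism $\widehat{P}_{j}\to\widehat{P}_{k}$ corresponds to an element of the morphism space $\mathcal{B}(k,j)$, the dual $f^{*}$ corresponds to the same element viewed in $\mathcal{B}^{\op}$, and push-down on morphisms between indecomposable projectives is implemented by the covering functor $\F$ on these morphism spaces. Since $\F^{\,\op}$ acts on morphisms by the same rule as $\F$, the two composites produce the same element of $\mathcal{A}^{\op}(\F k,\F j)$, and hence the same morphism of projectives. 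Once the maps are identified, both $\tr \F_{\lambda}X$ and $\F_{\lambda}^{\;\op}\tr X$ are realized as cokernels of the same morphism, which gives the desired isomorphism.

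The main obstacle is the naturality check in the last paragraph: Lemma \ref{comPD&*} is stated only on objects, so one must be a little careful to promote it to a statement about morphisms between projectives. Once that compatibility is established, the rest is formal from exactness of $\F_{\lambda}$ and $\F_{\lambda}^{\;\op}$, preservation of radical morphisms, and uniqueness (up to isomorphism) of the cokernel in a minimal projective presentation.
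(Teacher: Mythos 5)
Your argument is correct and follows essentially the same route as the paper's proof: push down the minimal projective presentation, dualize with $(-)^{*}$ in the two possible orders, identify the projective terms via Lemma \ref{comPD&*}, and compare cokernels. The only difference is one of detail: you explicitly flag and sketch the compatibility $\F_{\lambda}^{\;\op}(f^{*})\cong(\F_{\lambda}f)^{*}$ (and the minimality of the pushed-down presentation via Corollary \ref{non-radtonon-rad}), steps the paper asserts without further comment.
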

\begin{proof}
    Let
    \begin{equation}
    \label{PRtildeM}
        \begin{tikzcd}
            Q_{-1}\arrow[r,"f"]&    Q_{0}\arrow[r]&X\arrow[r]&0
        \end{tikzcd}
    \end{equation}
    be the minimal projective presentation of $X$ with $Q_{-1},Q_{0}\in\mathsf{proj}\,\mathcal{B}$. Define $P_0\coloneqq\F_{\lambda}Q_{0}$ and $P_{-1}\coloneqq\F_{\lambda}Q_{-1}$. We get that
    \begin{equation}
    \label{PRM}
        \begin{tikzcd}
            P_{-1}\arrow[r,"\F_{\lambda}f"]& P_{0}\arrow[r]&M\arrow[r]&0
        \end{tikzcd}
    \end{equation}
    is the minimal projective presentation of $M$ with $P_{-1},P_{0}\in\mathsf{proj}\,\mathcal{A}$. Now we apply the functors $\Hom_{\mathcal{B}}(-,\mathcal{B})$ and $\Hom_{\mathcal{A}}(-,\mathcal{A})$ to the exact sequences in (\ref{PRtildeM}) and (\ref{PRM}), respectively, to get
    \begin{equation}
    \label{trtildeM&trM}
        \begin{tikzcd}
            (Q_{0})^{*}\arrow[r,"f^{*}"]&    (Q_{-1})^{*}\arrow[r]&\tr X\arrow[r]&0\\
            P_{0}^{*}\arrow[r,"(\F_{\lambda}f)^{*}"]&P_{-1}^{*}\arrow[r]&\tr M\arrow[r]&0.
        \end{tikzcd}
    \end{equation}
    Apply the push-down functor $\F_{\lambda}^{\;\op}$ to the first exact sequence in (\ref{trtildeM&trM}) and use Lemma \ref{comPD&*} to get
    \[
        \begin{tikzcd}[column sep=huge]
            P_{0}^{*}\arrow[r,"\F_{\lambda}^{\;\op}f^{*}"]&P_{-1}^{*}\arrow[r]&\F_{\lambda}^{\;\op}\tr X\arrow[r]&0
        \end{tikzcd}
    \]
    Since $\F_{\lambda}^{\;\op}f^{*} \cong (\F_{\lambda}f)^{*}$, we get that $\F_{\lambda}^{\;\op}\tr X=\tr M=\tr\F_{\lambda}X$.
\end{proof}

The following proposition shows that if we start with a support $\tau_{\mathcal{A}}$-tilting pair which is in the essential image of the push-down functor $\F_{\lambda}$, then its right mutation is also in the essential image of $\F_{\lambda}$.

\begin{prop}
\phantomsection\label{rightMutInImg}
    Let $(M,P_{M})$ be a basic support $\tau$-tilting $\mathcal{A}$-module with $M\in\mathcal{A}\mod_{1}$. Assume that $\mu^{+}(M,P_{M})=(N,P_{N})$ is a right mutation of $(M,P_{M})$. Then, there exists $Y\in \mathcal{B}\mod$ and $Q_{Y}\in\mathsf{proj}\,\mathcal{B}$ such that $\F_{\lambda}Y=N$ and $\F_{\lambda}Q_{Y}=P_{N}$.
\end{prop}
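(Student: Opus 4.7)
The strategy is to reduce the right-mutation case to the already-proven left-mutation statement (Proposition~\ref{LeftMutInImg} together with Corollary~\ref{LeftG-Mut}) by passing to the opposite covering $\F^{\,\op}\colon \mathcal{B}^{\,\op}\to\mathcal{A}^{\,\op}$, as suggested by \cite[Remark~2.32]{AIR}. The transpose provides a bijection between basic support $\tau_{\mathcal{A}}$-tilting pairs of $\mathcal{A}$ and basic support $\tau_{\mathcal{A}^{\,\op}}$-tilting pairs of $\mathcal{A}^{\,\op}$ that interchanges left and right mutations: writing $M=M_{np}\oplus M_{p}$ with $M_{p}$ the maximal projective summand of $M$, the pair $(M,P_{M})$ is sent to $(\tr M_{np}\oplus P_{M}^{*},\,M_{p}^{*})$ over $\mathcal{A}^{\,\op}$.

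The first step is to transfer the hypothesis $M\in\mathcal{A}\mod_{1}$ to the image pair over $\mathcal{A}^{\,\op}$. Since $\mathcal{A}\mod_{1}$ is closed under direct summands, $M_{np}\in\mathcal{A}\mod_{1}$, so there exists $X\in\mathcal{B}\mod$ with $\F_{\lambda}X=M_{np}$; by Lemma~\ref{ComTr&PD}, $\F_{\lambda}^{\,\op}\tr X=\tr M_{np}$, so $\tr M_{np}$ is of first kind with respect to $\F_{\lambda}^{\,\op}$. The projective components $P_{M}^{*}$ and $M_{p}^{*}$ automatically lie in the essential image of $\F_{\lambda}^{\,\op}$ by Lemma~\ref{comPD&*}, using that the push-down sends indecomposable projectives to indecomposable projectives and that every indecomposable projective $\mathcal{A}^{\,\op}$-module arises in this way. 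Hence the dual pair is of first kind for $\F_{\lambda}^{\,\op}$.

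The right mutation $\mu^{+}(M,P_{M})=(N,P_{N})$ corresponds under the duality to a left mutation of the dual pair in $\mathcal{A}^{\,\op}$. Applying Proposition~\ref{LeftMutInImg} (and Corollary~\ref{LeftG-Mut}, which handles the case when the mutated summand becomes zero and is absorbed into the projective part) in the opposite setting shows this left-mutated pair remains of first kind for $\F_{\lambda}^{\,\op}$. Translating back via the inverse of the duality and invoking Lemma~\ref{ComTr&PD} once more produces the desired $Y\in\mathcal{B}\mod$ with $\F_{\lambda}Y=N$; the existence of $Q_{Y}\in\mathsf{proj}\,\mathcal{B}$ with $\F_{\lambda}Q_{Y}=P_{N}$ is automatic since $P_{N}$ is projective and every indecomposable projective $\mathcal{A}$-module is the push-down of an indecomposable projective $\mathcal{B}$-module. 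The main technical subtlety lies in bookkeeping the interchange between projective and non-projective summands across the transpose: in a right mutation an indecomposable summand may migrate from the module part of the pair to the projective part (or vice versa), and one must check that Proposition~\ref{LeftMutInImg} and Corollary~\ref{LeftG-Mut} cover all these scenarios uniformly once translated to the opposite category.
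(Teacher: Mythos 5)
Your proposal follows essentially the same route as the paper's proof: both pass to the opposite covering via the $(-)^{\dag}$ duality of \cite[Remark 2.32]{AIR}, apply the left-mutation result (Proposition~\ref{LeftMutInImg}) over $\mathcal{A}^{\,\op}$, and translate back using Lemma~\ref{ComTr&PD} and Lemma~\ref{comPD&*} together with $(\F_{\lambda}^{\;\op})^{\op}=\F_{\lambda}$. Your explicit check that the dual pair $(\tr M_{\mathrm{np}}\oplus P_{M}^{*},M_{\mathrm{pr}}^{*})$ is of first kind for $\F_{\lambda}^{\;\op}$ is a point the paper leaves implicit, but the argument is the same one.
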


\begin{proof}
    We will follow \cite[Remark 2.32]{AIR} in order to understand $(N,P_{N})$. We let $M_{\mathrm{pr}}$ be the maximal projective direct summand of $M$ and we let $M_{\mathrm{np}}$ be such that $M \cong M_{\mathrm{np}} \oplus M_{\mathrm{pr}}$. Recall $(M,P_{M})^{\dag}=(\tr M_{\mathrm{np}}\oplus P_{M}^{*},M_{\mathrm{pr}}^{*})$, which is a support $\tau_{\mathcal{A}^{\rm op}}$ rigid pair. Let $(L,P_{L})$ be the left mutation of $(\tr M_{\mathrm{np}}\oplus P_{M}^{*},M_{\mathrm{pr}}^{*})$ such that $(L,P_{L})^{\dag}=(N,P_{N})$. Rewrite $(L,P_{L})=(L_{\mathrm{np}}\oplus L_{\mathrm{pr}},P_{L})$. Since $(L,P_{L})$ is a left mutation of $(M,P_{M})^{\dag}$, we know as a consequence of Proposition \ref{LeftMutInImg} that all of the direct summands of $L$ are in the essential image of the push-down functor $\F_{\lambda}^{\;\op}$. Therefore, assume we have
    \[
        (L_{\mathrm{np}}\oplus L_{\mathrm{pr}},P_{L})=(\F_{\lambda}^{\;\op}\widehat{L}_{\mathrm{np}}\;\oplus\,\F_{\lambda}^{\;\op}\widehat{L}_{\mathrm{pr}}\,,\,\F_{\lambda}^{\;\op}\widehat{P}_{L}).
    \]
    On applying $(-)^{\dag}$, we get
    \begin{align*}
        (N,P_{N})=(L,P_{L})^{\dag}&=\left(\F_{\lambda}^{\;\op}\widehat{L}_{\mathrm{np}}\,\oplus\,\F_{\lambda}^{\;\op}\widehat{L}_{\mathrm{pr}}\,,\,\F_{\lambda}^{\;\op}\widehat{P}_{L}\right)^{\dag}\\&=\left(\tr\F_{\lambda}^{\;\op}\widehat{L}_{\mathrm{np}}\,\oplus\,(\F_{\lambda}^{\;\op}\widehat{P}_{L})^{*}\,,\,(\F_{\lambda}^{\;\op}\widehat{L}_{\mathrm{pr}})^{*}\right).
    \end{align*}
    Now applying Lemma \ref{ComTr&PD} and Lemma \ref{comPD&*} to $\mathcal{A}^{\op}$ and using the fact that $\left(\F_{\lambda}^{\;\op}\right)^{\op}=\F_{\lambda}$, we get that
    \begin{align*}                                                  (N,P_{N})&=\left((\F_{\lambda}^{\;\op})^{\mathrm{op}}\tr\widehat{L}_{\mathrm{np}}\,\oplus\,(\F_{\lambda}^{\;\op})^{\op}(\widehat{P}_{L})^{*}\,,\,(\F_{\lambda}^{\;\op})^{\op}(\widehat{L}_{\mathrm{pr}})^{*}\right)\\
    &=\left(\F_{\lambda}\tr\widehat{L}_{\mathrm{np}}\,\oplus\,\F_{\lambda}(\widehat{P}_{L})^{*}\,,\,\F_{\lambda}(\widehat{L}_{\mathrm{pr}})^{*}\right).
    \end{align*}
    Hence, take $Y\coloneqq\tr\widehat{L}_{\mathrm{np}}\oplus(\widehat{P}_{L})^{*}$ and $Q_{Y}\coloneqq(\widehat{L}_{\mathrm{pr}})^{*}$.
\end{proof}

\begin{rmk}
    In the above proposition, since $(N,P_N)$ is a support $\tau_{\mathcal{A}}$-tilting pair in $\mathcal{A}\mod$, we see that $(\mathsf{add}_{G}(Y),\mathsf{add}_{G}(Q_{Y}))$ is a support $(G,\tau_{\mathcal{B}})$-tilting pair in $\mathcal{B}\mod$.
\end{rmk}

From above discussion, we get that the push-down functor $\mathcal{F}_{\lambda}$ is compatible with both $(G,\tau_{\mathcal{B}})$-mutation of $G$-orbits of support $(G, \tau_\mathcal{B})$-tilting pairs and mutation of indecomposable summands of (support) $\tau_{\mathcal{A}}$-tilting pairs. More precisely,

\begin{thm}
\phantomsection\label{mainresult}
    We have the following commutative diagram
    \begin{equation}
    \label{MutCommDiag}
        \begin{tikzcd}
            s(G,\tau_{\mathcal{B}})\tilt \arrow[r,"{\mathcal{F}_{\lambda}}"] \arrow[d,"{\mu}"] & s\tau_{\mathcal{A}}\tilt \arrow[d,"{\mu}"] \\
            s(G,\tau_{\mathcal{B}})\tilt \arrow[r,"{\mathcal{F}_{\lambda}}"] & s\tau_{\mathcal{A}}\tilt
        \end{tikzcd}
    \end{equation}
\end{thm}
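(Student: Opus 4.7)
The plan is to establish two things: that $\F_\lambda$ induces a well-defined map $s(G,\tau_{\mathcal{B}})\tilt \to s\tau_{\mathcal{A}}\tilt$ sending $(\mathcal{T},\mathcal{P})$ to $(\mathsf{add}(\F_\lambda T),\mathsf{add}(\F_\lambda P))$, where $T$ (resp.\ $P$) is a direct sum of one $G$-orbit representative per indecomposable in $\mathcal{T}$ (resp.\ $\mathcal{P}$); and then that this assignment intertwines mutation on the two sides.

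For well-definedness, since $G$ is torsion-free, $\F_\lambda$ preserves indecomposability, and two indecomposable $\mathcal{B}$-modules have isomorphic push-downs exactly when they lie in the same $G$-orbit. Hence $|\F_\lambda T| + |\F_\lambda P| = |\mathcal{T}|_G + |\mathcal{P}|_G = |\mathcal{A}|$. The $\tau_\mathcal{A}$-rigidity of $\F_\lambda T$ follows by applying Proposition \ref{NoHom} to each pair of summands of $T$, together with the $G$-equivariance of $\mathcal{T}$. The vanishing $\Hom_\mathcal{A}(\F_\lambda P, \F_\lambda T) = 0$ is an entirely parallel adjunction argument using $\F_\bullet \F_\lambda \cong \bigoplus_{g\in G}(-)^g$, the $G$-equivariance of $\mathcal{T}$, and the assumption $\Hom_\mathcal{B}(\mathcal{P},\mathcal{T}) = 0$. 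Well-definedness modulo the equivalence relation $\simeq$ is automatic since $\F_\lambda$ depends only on $G$-orbits.

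For the compatibility with mutation, fix $\mu_{\mathcal{O}(X)}(\mathcal{T}_1,\mathcal{P}_1) = (\mathcal{T}_2,\mathcal{P}_2)$ with $X \in \mathcal{B}\ind$. I would split into cases according to whether $X$ is a summand of $\mathcal{T}_1$ or of $\mathcal{P}_1$, and whether we are performing a left or right mutation; the left case with $X$ a summand of $\mathcal{T}_1$ is the heart of the argument. Writing $\mathcal{T}_1 = \mathsf{add}_G(\widehat{U}, X)$ with $\F_\lambda X = T_i$, Corollary \ref{LeftG-Mut} produces $\mathcal{T}_2 = \mathsf{add}_G(\widehat{U}, Z)$ through the exchange sequence $X \to N \to Y \to 0$, where $N$ is a minimal left $\mathsf{add}_G(\widehat{U})$-approximation of $X$ and $Y$ is a finite direct sum of $G$-translates of $Z$. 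Applying $\F_\lambda$ and invoking Corollary \ref{minleftapprox}, the resulting exact sequence $T_i \to \F_\lambda N \to \F_\lambda Y \to 0$ is exactly the exchange sequence defining the left mutation at $T_i$ of the $\tau_\mathcal{A}$-tilting pair $\F_\lambda(\mathcal{T}_1,\mathcal{P}_1)$, so $\F_\lambda(\mathcal{T}_2,\mathcal{P}_2) = \mu^-_{T_i}(\F_\lambda(\mathcal{T}_1,\mathcal{P}_1))$. The right-mutation case, and the case where $X$ is a summand of $\mathcal{P}_1$, reduce to the left-mutation case via the opposite-category duality already exploited in Proposition \ref{rightMutInImg}, using $(\F_\lambda^{\,\op})^{\op} = \F_\lambda$ and Lemmas \ref{comPD&*} and \ref{ComTr&PD}.

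The main obstacle is the bookkeeping around the degenerate situation where $Y = 0$ in the $(G,\tau_\mathcal{B})$-exchange sequence. In that case one has $T_i' = 0$ on the $\tau_\mathcal{A}$-side, i.e., the mutation shifts the orbit of $X$ from $\mathcal{T}_1$ into the projective part $\mathcal{P}_2$, and I would need to verify that $\mathcal{P}_2$ is precisely $\mathsf{add}_G(\mathcal{P}_1, \widehat{P}_{j})$ for an appropriate indecomposable projective $\widehat{P}_j$ lying over $P_i$, so that the two outputs still match under $\F_\lambda$. A secondary point, straightforward but worth verifying, is that an almost-complete support $(G,\tau_\mathcal{B})$-tilting pair $(\mathcal{T},\mathcal{Q}) \preceq (\mathcal{T}_1,\mathcal{P}_1)$ witnessing the orbit mutation pushes down to an almost-complete support $\tau_\mathcal{A}$-tilting pair witnessing the indecomposable mutation at $T_i$, which is forced once the counting $|\cdot|_G \mapsto |\cdot|$ under $\F_\lambda$ is in hand.
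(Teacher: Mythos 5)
Your proposal is correct and follows essentially the same route as the paper: the paper's proof likewise reduces the commutativity of the square to Corollary \ref{LeftG-Mut} for left mutations and Proposition \ref{rightMutInImg} for right mutations, with well-definedness of the horizontal maps resting on Proposition \ref{NoHom} and the orbit-counting properties of $\mathcal{F}_{\lambda}$. The $Y=0$ bookkeeping you flag is left implicit in the paper as well, so your write-up is, if anything, slightly more explicit on that point.
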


\begin{proof}
    Let $(\mathcal{T}_{1},\mathcal{P}_{1})\in s(G,\tau_{\mathcal{B}})\tilt$ such that $\mu_{\mathcal{O}(\widehat{X})}(\mathcal{T}_1)=\mathcal{T}_2$. So, we have $(\mathcal{T}_{2},\mathcal{P}_{2})\in s(G,\tau_{\mathcal{B}})\tilt$ for some $\mathcal{P}_{2}\in \mathsf{proj}\,\mathcal{B}$. Let $\F_{\lambda}(\mathcal{T}_{1},\mathcal{P}_{1})=(\mathsf{add}(T_1),\mathsf{add}(P_1))$, and $\F_{\lambda}\widehat{X}=X$. Let $\mu_{X}(T_1)=T_2$ with $(T_1,P_1),(T_2,P_2)\in s\tau_{\mathcal{A}}\tilt$. If $\mu_{X}(T_1)=T_2$ is the left mutation (resp., right mutation), then from Corollary \ref{LeftG-Mut} (resp., Proposition \ref{rightMutInImg}) we get that $\F_{\lambda}(\mathcal{T}_{2},\mathcal{P}_{2})=(\mathsf{add}(T_2),\mathsf{add}(P_2))$.
\end{proof}

As a consequence of the above commutative diagram we see that support $(G,\tau_{\mathcal{B}})$-tilting pairs also enjoy a similar property as support $\tau_{\mathcal{A}}$-tilting pairs.

\begin{cor}
    Let $(\mathcal{T}_1,\mathcal{P}_1)\in s(G,\tau_{\mathcal{B}})\tilt$. If $\widehat{X}\in\mathcal{B}\ind$ is such that it satisfies either $\mathcal{T}_1=\mathsf{add}_{G}(\mathcal{T},\widehat{X})$ and $\mathcal{P}_1 = \mathcal{Q}$; or $\mathcal{P}_1=\mathsf{add}_{G}(\mathcal{Q},\widehat{X})$ and $\mathcal{T}_1 = \mathcal{T}$ for some almost complete support $(G,\tau_{\mathcal{B}})$-tilting pair $(\mathcal{T},\mathcal{Q})$ in $\mathcal{B}\mod$, then there is a unique $(\mathcal{T}_2,\mathcal{P}_2)\in s(G,\tau_{\mathcal{B}})\tilt$, up to equivalence, such that $\mu_{\mathcal{O}(\widehat{X})}(\mathcal{T}_1)=\mathcal{T}_2$.
\end{cor}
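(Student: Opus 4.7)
The plan is to use Theorem \ref{mainresult} to transport the question downstairs, where existence and uniqueness of mutations for $s\tau_{\mathcal{A}}\tilt$ is already known from Theorem 2.18 of \cite{AIR}, and then to lift the result back upstairs using Corollary \ref{LeftG-Mut} and Proposition \ref{rightMutInImg}.

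\textbf{Existence.} First push $(\mathcal{T}_1,\mathcal{P}_1)$ down via $\mathcal{F}_\lambda$, obtaining a basic support $\tau_{\mathcal{A}}$-tilting pair $(T_1,P_1)\in s\tau_{\mathcal{A}}\tilt$, together with an indecomposable summand $X := \mathcal{F}_\lambda \widehat{X}$ of either $T_1$ or $P_1$. By Theorem 2.18 of \cite{AIR}, there is a unique basic support $\tau_{\mathcal{A}}$-tilting pair $(T_2,P_2) = \mu_X(T_1,P_1)$. Depending on whether $\mu_X$ is a left or a right mutation, Corollary \ref{LeftG-Mut} or Proposition \ref{rightMutInImg} respectively supplies a lift: a support $(G,\tau_{\mathcal{B}})$-tilting pair $(\mathcal{T}_2,\mathcal{P}_2)$ whose image under $\mathcal{F}_\lambda$ is $(\mathsf{add}(T_2),\mathsf{add}(P_2))$ and which shares the same almost complete pair $(\mathcal{T},\mathcal{Q})$ with $(\mathcal{T}_1,\mathcal{P}_1)$. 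Hence $\mu_{\mathcal{O}(\widehat{X})}(\mathcal{T}_1) = \mathcal{T}_2$, establishing existence.

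\textbf{Uniqueness.} Suppose $(\mathcal{T}_2,\mathcal{P}_2)$ and $(\mathcal{T}_2',\mathcal{P}_2')$ are both $(G,\tau_{\mathcal{B}})$-mutations of $(\mathcal{T}_1,\mathcal{P}_1)$ at the orbit $\mathcal{O}(\widehat{X})$. Apply $\mathcal{F}_\lambda$: by Theorem \ref{mainresult}, the two images lie in $s\tau_{\mathcal{A}}\tilt$ and are both mutations of $(T_1,P_1)$ at the indecomposable $X$, so Theorem 2.18 of \cite{AIR} forces them to coincide. To upgrade equality downstairs to equivalence upstairs, pick any indecomposable $M\in \mathcal{T}_2$. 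Then $\mathcal{F}_\lambda M \in \mathcal{F}_\lambda \mathcal{T}_2'$, so there exists an indecomposable $M'\in \mathcal{T}_2'$ with $\mathcal{F}_\lambda M' \cong \mathcal{F}_\lambda M$. By property~(6) of the push-down functor recalled in the Preliminaries, $M \cong (M')^g$ for some $g\in G$, and since $\mathcal{T}_2'$ is $G$-equivariant we conclude $M\in \mathcal{T}_2'$. The symmetric argument and the analogous one for $\mathcal{P}_2,\mathcal{P}_2'$ yield $(\mathcal{T}_2,\mathcal{P}_2)\simeq(\mathcal{T}_2',\mathcal{P}_2')$.

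The main obstacle is the uniqueness step: two non-equivalent upstairs mutations could a priori be glued into the same downstairs image, so one must match their indecomposable summands orbit by orbit. The key ingredients are the $G$-equivariance of the candidate subcategories and the exact description of the fibers of $\mathcal{F}_\lambda$ on indecomposables as single $G$-orbits. Existence, by contrast, is essentially a formal consequence of running the commutative diagram of Theorem \ref{mainresult} in reverse, with Corollary \ref{LeftG-Mut} and Proposition \ref{rightMutInImg} furnishing the two lifting statements that cover left and right mutations.
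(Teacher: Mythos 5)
Your proposal is correct and follows essentially the route the paper intends: the corollary is stated there as an immediate consequence of Theorem \ref{mainresult} (push down, invoke Theorem 2.18 of \cite{AIR}, lift back via Corollary \ref{LeftG-Mut} and Proposition \ref{rightMutInImg}), which is exactly your argument. Your uniqueness step, matching indecomposables orbit-by-orbit using property (6) of the push-down functor and $G$-equivariance, just makes explicit the detail the paper leaves implicit.
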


We can define left and right mutation of elements in $s(G,\tau_{\mathcal{B}})\tilt$ using the commutative diagram above.

\begin{defn}
    Let $(\mathcal{T}_1,\mathcal{P}_1),(\mathcal{T}_2,\mathcal{P}_2)\in s(G,\tau_{\mathcal{B}})\tilt$. If $\F_{\lambda}(\mathcal{T}_2,\mathcal{P}_2)$ is a left mutation (resp., right mutation) of $\F_{\lambda}(\mathcal{T}_1,\mathcal{P}_1)$, then $(\mathcal{T}_2,\mathcal{P}_2)$ is called the \emph{left mutation} (resp., \emph{right mutation}) of $(\mathcal{T}_1,\mathcal{P}_1)$, and we denote it by $\mu^{-}_{\mathcal{O}(\widehat{X})}(\mathcal{T}_1)=\mathcal{T}_2$ (resp., $\mu^{+}_{\mathcal{O}(\widehat{X})}(\mathcal{T}_1)=\mathcal{T}_2$).
\end{defn}

Similar to the quiver in Definition \ref{MutQui}, we can define a quiver that records the information about left and right mutations of $G$-orbits in $s(G,\tau_{\mathcal{B}})\tilt$.

\begin{defn}
    The \emph{mutation quiver} $Q(s(G,\tau_{\mathcal{B}})\tilt)$ of $\mathcal{B}$ is defined as follows:
    \begin{enumerate}
        \item The set of vertices are the elements of $s(G,\tau_{\mathcal{B}})\tilt$.
        \item We draw an arrow from $(\mathcal{T}_1,\mathcal{P}_1)$ to $(\mathcal{T}_2,\mathcal{P}_2)$ if $(\mathcal{T}_2,\mathcal{P}_2)$ is a left mutation of $(\mathcal{T}_1,\mathcal{P}_1)$.
    \end{enumerate}
\end{defn}

\begin{rmk}
    Theorem \ref{mainresult} implies that the connected components of $Q(s(G,\tau_{\mathcal{B}})\tilt)$ can be identified with some connected components of $Q(s\tau_{\mathcal{A}}\tilt)$ via the push-down functor. It is natural to ask whether all components of $Q(s\tau_{\mathcal{A}}\tilt)$ can be obtained in this way. In the next section, we address this question in the affirmative when $G$ is free.
\end{rmk}

\section{Some remarks on \texorpdfstring{$\mathcal{A}\mod_1$}{A-mod1} and \texorpdfstring{$G$}{G}-gradable modules}

We use Theorem \ref{mainresult} to make some observations on the image of the push-down functor. Recall Definition \ref{MutQui} of the mutation quiver $Q(s\tau_{\mathcal{A}}\tilt)$ of support $\tau_\mathcal{A}$-tilting pairs. For a support $\tau_\mathcal{A}$-tilting pair $(T,P)$, we write $(T, P) \in \mathcal{A}\mod_1$ when both $T, P$ are in the essential image of the push-down functor $\mathcal{F}_\lambda$. Clearly, this condition is equivalent to $T$ being in the essential image of the push-down functor.

\begin{prop} 
\label{Connected component for tau-tilting}
    If $(T_1,P_1),(T_2, P_2) \in s\tau_{\mathcal{A}}\tilt$ are in the same connected component of $Q(s\tau_{\mathcal{A}}\tilt)$, then
    \[
        (T_1, P_1) \in \mathcal{A}\mod_1 \;\text{if and only if}\;\,(T_2, P_2)\in \mathcal{A}\mod_1.
    \]
    In particular, if $(T, P) \in s\tau_{\mathcal{A}}\tilt$ is such that it belongs to the connected component of $Q(s\tau_{\mathcal{A}}\tilt)$ with $(\mathcal{A},0)$ or $(0, \mathcal{A})$ as one of its vertices, then $(T, P)\in \mathcal{A}\mod_1$.
\end{prop}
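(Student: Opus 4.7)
The plan is to reduce the equivalence to a statement about a single mutation step, and then traverse a connecting walk in $Q(s\tau_\mathcal{A}\tilt)$ using that preservation. Two support $\tau_\mathcal{A}$-tilting pairs lying in the same connected component of $Q(s\tau_\mathcal{A}\tilt)$ are joined by a finite sequence of edges read in either direction; since every edge represents a left mutation, reading it backward is a right mutation. By induction on the length of such a walk, it suffices to check that if $(T_1,P_1) \in \mathcal{A}\mod_1$ and $(T_2,P_2) = \mu_X^{\pm}(T_1,P_1)$ for some indecomposable $X$, then $(T_2,P_2) \in \mathcal{A}\mod_1$.

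For the left mutation case, I would first note that the projective part of any support $\tau_\mathcal{A}$-tilting pair always lies in $\mathcal{A}\mod_1$ automatically, since $\mathcal{F}_\lambda \widehat{P}_{\hat{x}} = P_x$ for each indecomposable projective $P_x$ of $\mathcal{A}$. Hence only $T_2$ needs to be checked. If $X$ is a summand of $T_1$, then Proposition \ref{LeftMutInImg} gives that the new indecomposable summand $T'_i$ replacing $X$ is either zero or in $\mathcal{A}\mod_1$, while the remaining summands of $T_2$ coincide with summands of $T_1 \in \mathcal{A}\mod_1$. If instead $X$ sits in $P_1$, then $T_2$ is a direct summand of $T_1$, so there is nothing to check. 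For the right mutation case, Proposition \ref{rightMutInImg} directly produces $Y \in \mathcal{B}\mod$ and $Q_Y \in \mathsf{proj}\,\mathcal{B}$ with $\mathcal{F}_\lambda Y = T_2$ and $\mathcal{F}_\lambda Q_Y = P_2$, which is exactly the assertion $(T_2,P_2) \in \mathcal{A}\mod_1$.

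For the ``In particular'' part, I would observe that $(\mathcal{A},0) \in \mathcal{A}\mod_1$ because $\mathcal{A} = \bigoplus_{x \in Q_0} P_x$ with each $P_x = \mathcal{F}_\lambda \widehat{P}_{\hat{x}}$ for any fixed lift $\hat{x}$; and $(0,\mathcal{A}) \in \mathcal{A}\mod_1$ for the same reason. Applying the first part of the proposition to a walk from $(T,P)$ to either of these vertices then yields $(T,P) \in \mathcal{A}\mod_1$. There is no real obstacle here since the technical work has been absorbed into Propositions \ref{LeftMutInImg} and \ref{rightMutInImg}; the only mild subtlety is the bookkeeping for the case where an indecomposable summand crosses between the module part and the projective part under mutation, which is handled transparently by the observation that projectives automatically lie in $\mathcal{A}\mod_1$.
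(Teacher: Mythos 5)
Your proposal is correct and takes essentially the same route as the paper: the paper's proof just invokes the commutative diagram of Theorem \ref{mainresult} (which was itself established via Propositions \ref{LeftMutInImg} and \ref{rightMutInImg}) together with the fact that projective $\mathcal{A}$-modules lie in the image of $\mathcal{F}_{\lambda}$, which is precisely the edge-by-edge argument you spell out. One harmless slip: mutating at an indecomposable summand $X$ of $P_1$ is always a right mutation (the other completion of the almost complete pair obtained by removing $X$ from $P_1$ must enlarge the module part, since the projective part is determined by the module part), so your ``left mutation with $X$ in $P_1$'' subcase is vacuous and its claim that $T_2$ is a direct summand of $T_1$ is inaccurate but not needed, as Proposition \ref{rightMutInImg} already covers every right mutation.
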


\begin{proof}
    The results follow from the commutative diagram (\ref{MutCommDiag}) and the fact that the projective $\mathcal{A}$-modules are in the image of the push-down functor.
\end{proof}

\begin{cor}
    If $Q(s\tau_{\mathcal{A}}\tilt)$ is connected, then every $\tau_{\mathcal{A}}$-rigid $\mathcal{A}$-module belongs to $\mathcal{A}\mod_1$. In particular, if $\mathcal{A}$ is a hereditary algebra or if $\mathcal{A}$ is a $\tau_{\mathcal{A}}$-tilting finite algebra, then every $\tau_{\mathcal{A}}$-rigid $\mathcal{A}$-module lies in $\mathcal{A}\mod_1$.
\end{cor}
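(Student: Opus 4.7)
The plan is to combine Proposition~\ref{Connected component for tau-tilting} with the Bongartz-completion analogue for $\tau_\mathcal{A}$-rigid modules (Theorem~\ref{tau-rigid}). Let $M$ be a $\tau_\mathcal{A}$-rigid $\mathcal{A}$-module. By Theorem~\ref{tau-rigid}, $M$ is a direct summand of some $\tau_\mathcal{A}$-tilting $\mathcal{A}$-module $T$, so $(T,0)$ is a support $\tau_\mathcal{A}$-tilting pair in $s\tau_\mathcal{A}\tilt$. Since $\mathcal{A}$ decomposes into a finite direct sum of indecomposable projectives $P_i$ and each $P_i = \mathcal{F}_\lambda\widehat{P}_j$ for some $\hat{j} \in \Gamma_0$ over $i$, the pair $(\mathcal{A},0)$ lies in $\mathcal{A}\mod_1$. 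The connectedness hypothesis then places $(T,0)$ and $(\mathcal{A},0)$ in the same connected component of $Q(s\tau_\mathcal{A}\tilt)$, so Proposition~\ref{Connected component for tau-tilting} yields $(T,0) \in \mathcal{A}\mod_1$. By the definition of $\mathcal{A}\mod_1$, every direct summand of $T$ lifts through $\mathcal{F}_\lambda$, hence in particular $M \in \mathcal{A}\mod_1$.

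For the two special cases, it suffices to invoke known connectedness results for $Q(s\tau_{\mathcal{A}}\tilt)$. When $\mathcal{A}$ is $\tau_\mathcal{A}$-tilting finite, the poset $s\tau_\mathcal{A}\tilt$ is finite and its Hasse quiver is known to be connected (see \cite{AIR}); when $\mathcal{A}$ is hereditary, connectedness of the Hasse quiver of support tilting modules (which coincides with support $\tau$-tilting modules in the hereditary case) is classical via Happel--Unger type mutation results. In either case Proposition~\ref{Connected component for tau-tilting} combined with the argument above delivers the conclusion.

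The main obstacle is conceptually minor: the whole strategy hinges on being able to realize any $\tau_\mathcal{A}$-rigid $M$ as a summand of \emph{some} support $\tau_\mathcal{A}$-tilting pair that we can connect to $(\mathcal{A},0)$ in the mutation quiver. The first part is handed to us by Bongartz completion, and the second part is precisely what connectedness of $Q(s\tau_\mathcal{A}\tilt)$ supplies, after which Proposition~\ref{Connected component for tau-tilting} propagates membership in $\mathcal{A}\mod_1$ along mutation arrows. The only potential subtlety is making sure that ``$T \in \mathcal{A}\mod_1$'' really does entail ``every direct summand of $T$ is liftable individually'' — but this is built into the very definition of modules of the first kind, so no extra work is required.
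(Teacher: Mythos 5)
Your proposal is correct and follows essentially the same route as the paper: the paper's proof simply invokes Proposition~\ref{Connected component for tau-tilting} together with known connectedness results (citing \cite{BUAN2006572} for the hereditary case), leaving implicit exactly the steps you spell out, namely Bongartz completion via Theorem~\ref{tau-rigid} and the fact that membership in $\mathcal{A}\mod_1$ passes to direct summands by definition.
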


\begin{proof}
    The first statement follows from Proposition~\ref{Connected component for tau-tilting}. If $\mathcal{A}$ is $\tau$-tilting finite, then it is well known that $Q(s\tau_{\mathcal{A}}\tilt)$ is connected. If $\mathcal{A}$ is hereditary, then it follows from \cite{BUAN2006572} that $Q(s\tau_{\mathcal{A}}\tilt)$ is connected.
\end{proof}

We recall the following definition from \cite[Defintion, page 337]{Bongartz-Gabriel}

\begin{defn}
\phantomsection\label{locrepfin}
    A \emph{locally representation-finite category} is a locally bounded $\mathbb{K}$-category $\mathcal{R}$ such that the number of $V\in \mathcal{R}\ind$ with $V(x)\neq 0$ is finite for each $x\in \mathrm{obj}(\mathcal{R})$.
\end{defn}

Similar to the above Definition \ref{locrepfin}, we give the following definition in terms of $(G,\tau_{\mathcal{B}})$-tilting theory in $\mathcal{B}\mod$.

\begin{defn}
\phantomsection\label{loc(G,tau)tiltingfin}
    We call $\mathcal{B}$ to be $\emph{locally $(G,\tau_{\mathcal{B}})$-tilting finite}$ if, up to isomorphism, the number of $(G,\tau_{\mathcal{B}})$-rigid modules $M\in \mathcal{B}\mod$ satisfying $M(x)\neq 0$ is finite for every $x\in \mathrm{obj}(\mathcal{B})$. 
\end{defn}

One of the important results, due to P. Gabriel, in the covering theory of finite dimensional algebras can be summarized as follows:

\begin{unnumthm}[see \cite{GabUnivCov} Lemma 3.3, Theorem 3.6, also see \cite{MARTINEZVILLA1983277}]
\phantomsection\label{Gab'sresult}
    If $\mathcal{A}$ is a finite-dimensional algebra and $\mathcal{B}$ a Galois covering of $\mathcal{A}$, then $\mathcal{B}$ is locally representation-finite if and only if $\mathcal{A}$ is representation finite. 
\end{unnumthm}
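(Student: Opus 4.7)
The plan is to prove both directions using the push-down functor $\mathcal{F}_{\lambda}$, relying on the fact that $G$ torsion-free forces $\mathcal{F}_{\lambda}$ to preserve indecomposability and to induce an injection $\mathcal{B}\ind/G \hookrightarrow \mathcal{A}\ind$ (items~(3), (5), (6) in the list of properties of $\mathcal{F}_{\lambda}$ in the preliminaries). I also use that $G$ acts with exactly $|Q_{0}|$ orbits on $\Gamma_{0}$, so one can fix a finite set of vertex representatives $x_{1},\dots,x_{n}\in\Gamma_{0}$.

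For the direction ``$\mathcal{A}$ representation-finite $\Rightarrow \mathcal{B}$ locally representation-finite'' I would use a direct finite-fibre argument. Fix $x\in\Gamma_{0}$ and set $y \coloneqq p(x)$. Any $M\in\mathcal{B}\ind$ with $M(x)\ne 0$ has $\mathcal{F}_{\lambda}M\in\mathcal{A}\ind$ and $(\mathcal{F}_{\lambda}M)(y)\supseteq M(x)\ne 0$, so by hypothesis $\mathcal{F}_{\lambda}M$ ranges over only finitely many isomorphism classes $N\in\mathcal{A}\ind$. For each such $N$ the preimages under $\mathcal{F}_{\lambda}$ form a single $G$-orbit (item~(6)), and among these the representatives supported at $x$ correspond bijectively to the elements of $\operatorname{supp}(M)$ lying over $y$, a set of cardinality at most $\dim_{\mathbb{K}} N(y)$, hence finite.

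The converse direction ``$\mathcal{B}$ locally representation-finite $\Rightarrow \mathcal{A}$ representation-finite'' reduces to a symmetric count once one establishes essential surjectivity: if every $N\in\mathcal{A}\ind$ lies in the essential image of $\mathcal{F}_{\lambda}$, then $\mathcal{F}_{\lambda}$ becomes a bijection $\mathcal{B}\ind/G \leftrightarrow \mathcal{A}\ind$, and the left side is bounded by $\sum_{i=1}^{n}\#\{M\in\mathcal{B}\ind:M(x_{i})\ne 0\}$, which is finite under the hypothesis, so $\mathcal{A}\ind$ is finite.

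The main obstacle is therefore establishing essential surjectivity of $\mathcal{F}_{\lambda}$ on indecomposables. A natural approach is to propagate essential-image membership along the Auslander--Reiten quiver of $\mathcal{A}$: indecomposable projectives lie in the essential image by item~(1), and since $\tau_{\mathcal{A}}$ commutes with $\mathcal{F}_{\lambda}$ and $\mathcal{F}_{\lambda}$ is exact, the property of being in the essential image should transport across almost-split sequences once one knows the AR-quiver is sufficiently well-behaved, which is the content of representation-finiteness. The classical Bongartz--Gabriel and Martínez-Villa arguments circumvent this by a geometric reduction: every indecomposable over a representation-finite algebra has an open $\mathrm{GL}(\dim)$-orbit in its representation variety, and such open-orbit modules are $G$-gradable by a theorem of E.~Green (this is precisely the thread developed in the present paper leading up to Theorem~\ref{thmD}), hence lie in the essential image of $\mathcal{F}_{\lambda}$.
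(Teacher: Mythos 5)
The paper itself offers no proof of this statement: it is quoted as a classical theorem, with the citations to Gabriel and to Mart\'inez-Villa--de la Pe\~na standing in for the argument, and the paper only proves a $\tau$-tilting analogue of it later (via Theorem \ref{mainresult}). So your proposal has to be judged against the classical proofs. Your first direction ($\mathcal{A}$ representation-finite implies $\mathcal{B}$ locally representation-finite) is correct and is essentially the standard easy direction: $\mathcal{F}_{\lambda}$ preserves indecomposables since $G$ is torsion-free, the fibre over an indecomposable of the first kind is a single $G$-orbit, and a finite-dimensional $\mathcal{B}$-module has finite support, so at most $\dim_{\mathbb{K}} N(y)$ members of each orbit are supported at a fixed lift $x$ of $y$; the count is fine.

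The genuine gap is the converse, and you have named it yourself: one must show that when $\mathcal{B}$ is locally representation-finite every indecomposable $\mathcal{A}$-module is of the first kind, and neither of your two suggested routes closes this. The AR-quiver propagation is circular as phrased: you appeal to the AR-quiver of $\mathcal{A}$ being ``sufficiently well-behaved, which is the content of representation-finiteness,'' but representation-finiteness of $\mathcal{A}$ is precisely what this direction has to establish. What the cited proofs actually do is, first, show that local representation-finiteness of $\mathcal{B}$ forces $\mathcal{F}_{\lambda}$ to carry almost split sequences to almost split sequences, so that the modules of the first kind are closed under AR-neighbours, and second, use a Harada--Sai-type factorization bound coming from local representation-finiteness of $\mathcal{B}$ to show that any indecomposable $\mathcal{A}$-module admitting a nonzero map from a (first-kind) projective is itself of the first kind; neither ingredient appears in your sketch, and without them the induction never reaches an arbitrary indecomposable. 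Your alternative route via open orbits and Green's gradability theorem is also not the Bongartz--Gabriel/Mart\'inez-Villa argument and does not apply here: it presupposes that the indecomposable in question has an open orbit, which you cannot know without already having (something like) representation-finiteness of $\mathcal{A}$, and the result of this paper in that spirit (Theorem \ref{thmD}) requires $G$ finitely generated free and rigidity of the module, neither of which is available in the generality of the statement. So the hard direction of the theorem remains unproved in your proposal.
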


Using Theorem \ref{mainresult} we get a similar result to the above theorem in terms of $\tau_{\mathcal{A}}$-tilting theory. More precisely,

\begin{thm}
    The Galois covering $\mathcal{B}$ is locally $(G,\tau_{\mathcal{B}})$-tilting finite if and only if $\mathcal{A}$ is $\tau_{\mathcal{A}}$-tilting finite.
\end{thm}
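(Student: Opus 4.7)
The plan is to establish both directions separately, leveraging the commutative diagram of Theorem \ref{mainresult} and the component analysis of Proposition \ref{Connected component for tau-tilting}.

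For the implication $(\Leftarrow)$, assume $\mathcal{A}$ is $\tau_{\mathcal{A}}$-tilting finite and fix $\widehat{x} \in \Gamma_0$ with image $x = \mathcal{F}\widehat{x}$. I would bound the number of indecomposable $(G,\tau_{\mathcal{B}})$-rigid modules $\widehat{M} \in \mathcal{B}\mod$ with $\widehat{M}(\widehat{x}) \neq 0$. By the corollary to Proposition \ref{NoHom}, each such $\widehat{M}$ satisfies that $\mathcal{F}_{\lambda} \widehat{M}$ is $\tau_{\mathcal{A}}$-rigid; since $G$ is torsion-free, $\mathcal{F}_{\lambda}$ preserves indecomposables, and $(\mathcal{F}_{\lambda} \widehat{M})(x) \neq 0$. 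By hypothesis there are only finitely many such indecomposable $\tau_{\mathcal{A}}$-rigid $\mathcal{A}$-modules $N_1, \dotsc, N_r$. For each $N_i$ that lies in the essential image of $\mathcal{F}_{\lambda}$, Property $(6)$ of the push-down functor (Section 2.3) shows that the preimages of $N_i$ form a single $G$-orbit, with all translates pairwise non-isomorphic (as $G$ is torsion-free); within this orbit, a dimension count shows at most $\dim N_i(x)$ translates are supported at $\widehat{x}$. Summing over $i$ produces a finite bound.

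For the implication $(\Rightarrow)$, I argue by contrapositive: assume $\mathcal{A}$ is $\tau_{\mathcal{A}}$-tilting infinite, and construct, for some $\widehat{x} \in \Gamma_0$, infinitely many indecomposable $(G,\tau_{\mathcal{B}})$-rigid modules with support at $\widehat{x}$. The central step is to prove that the connected component $\mathcal{K}$ of $(\mathcal{A}, 0)$ in $Q(s\tau_{\mathcal{A}}\tilt)$ is infinite. Given this, Proposition \ref{Connected component for tau-tilting} yields that every pair in $\mathcal{K}$ lies in $\mathcal{A}\mod_1$, and thus there are infinitely many non-isomorphic indecomposable $\tau_{\mathcal{A}}$-rigid modules in $\mathcal{A}\mod_1$. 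By pigeonhole applied to the finite vertex set $Q_0$, some $x \in Q_0$ is in the support of infinitely many of these modules. For any preimage $\widehat{x}$ of $x$ and each such $N$, I pick an indecomposable preimage $\widehat{M}$ and a $G$-translate $\widehat{M}^g$ with $\widehat{M}^g(\widehat{x}) \neq 0$; such a translate exists because $N(x) = \bigoplus_{\widehat{y}/x} \widehat{M}(\widehat{y}) \neq 0$ forces some $\widehat{y}$ over $x$ to lie in the support of $\widehat{M}$. Property $(6)$ of the push-down functor ensures that distinct $N$'s yield distinct $G$-orbits and hence pairwise non-isomorphic modules, producing the required infinite family.

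The main obstacle is the structural claim that the connected component of $(\mathcal{A}, 0)$ in $Q(s\tau_{\mathcal{A}}\tilt)$ is infinite whenever $\mathcal{A}$ is $\tau_{\mathcal{A}}$-tilting infinite. My strategy would be to exhibit an infinite sequence of pairwise distinct left mutations starting at $(\mathcal{A}, 0)$: at any support $\tau_{\mathcal{A}}$-tilting pair inside $\mathcal{K}$, a left mutation at some indecomposable summand produces another pair in $\mathcal{K}$, and since $\mathcal{A}$ admits infinitely many indecomposable $\tau_{\mathcal{A}}$-rigid modules, the pool of summands appearing in pairs of $\mathcal{K}$ must be arbitrarily large, forcing $\mathcal{K}$ to be infinite. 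This is the one non-routine structural input; once secured, the rest is bookkeeping with the push-down functor.
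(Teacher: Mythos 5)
Your overall route is the same as the paper's. For the direction ``$\mathcal{A}$ is $\tau_{\mathcal{A}}$-tilting finite $\Rightarrow$ $\mathcal{B}$ is locally $(G,\tau_{\mathcal{B}})$-tilting finite'', the paper argues exactly as you do: the corollary to Proposition \ref{NoHom} sends indecomposable $(G,\tau_{\mathcal{B}})$-rigid modules to indecomposable $\tau_{\mathcal{A}}$-rigid modules, the fiber of the push-down over a fixed indecomposable is a single $G$-orbit, and only finitely many members of each orbit can be supported at a fixed $\widehat{x}$; your explicit bound by $\dim N_i(x)$ is just a sharper version of the paper's ``each $y\in\Gamma_0$ supports only finitely many modules in a given $G$-orbit''. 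For the converse your contrapositive argument --- infinite component of $(\mathcal{A},0)$ in $Q(s\tau_{\mathcal{A}}\tilt)$, Proposition \ref{Connected component for tau-tilting} to put all those pairs in $\mathcal{A}\mod_1$, pigeonhole over the finite set $Q_0$, lifting and translating by $g\in G$ to hit $\widehat{x}$, and uniqueness of the orbit over a given indecomposable to keep the family infinite --- is word for word the paper's proof.

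The one place where your proposal has a genuine gap is the ``main obstacle'' you isolate: your sketch that the component $\mathcal{K}$ of $(\mathcal{A},0)$ is infinite when $\mathcal{A}$ is $\tau_{\mathcal{A}}$-tilting infinite does not work as written. The step ``since $\mathcal{A}$ admits infinitely many indecomposable $\tau_{\mathcal{A}}$-rigid modules, the pool of summands appearing in pairs of $\mathcal{K}$ must be arbitrarily large'' is a non sequitur: nothing you have said forces any given indecomposable $\tau_{\mathcal{A}}$-rigid module to occur as a summand of a pair lying in $\mathcal{K}$; a priori they could all occur only in other components, and the mere fact that mutation is always possible only shows $\mathcal{K}$ has no sinks or sources in a trivial sense, not that it is infinite. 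The statement you need is the known result of Adachi--Iyama--Reiten \cite[Corollary 2.38]{AIR}: if $Q(s\tau_{\mathcal{A}}\tilt)$ has a finite connected component, then it equals that component; combined with the completion result (Theorem \ref{tau-rigid}) this shows a finite component forces $\tau_{\mathcal{A}}$-tilting finiteness, so an infinite algebra has every component (in particular that of $(\mathcal{A},0)$) infinite. The paper simply invokes this fact implicitly, and its proof is genuinely nontrivial (it rests on the order-theoretic properties of functorially finite torsion classes, not on ``mutation is always possible''). Replace your sketch by this citation and the rest of your argument goes through.
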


\begin{proof}
   Let us start with the sufficiency. Assume that $\mathcal{A}$ is not $\tau_{\mathcal{A}}$-tilting finite. Therefore, the connected component of $Q(s\tau_{\mathcal{A}}\tilt)$ containing $(\mathcal{A},0)$ has infinitely many vertices. Hence, by Proposition \ref{Connected component for tau-tilting}, we get infinitely many non-isomorphic $\tau_{\mathcal{A}}$-rigid $\mathcal{A}$-modules $\lbrace V_i\rbrace_{i\in\mathfrak{I}}$ such that $V_i\in\mathcal{A}\mod_{1}$ for each $i\in\mathfrak{I}$. We have lifts $\{\widehat{V}_i\}_{i\in\mathfrak{I}}$ over $\mathcal{B} \mod$ with $\F_{\lambda}\widehat{V}_i \cong V_i$ for each $i\in\mathfrak{I}$. Moreover, these $\widehat{V}_i$ are all $(G,\tau_\mathcal{B})$-rigid. Since $Q$ has finitely many vertices, there exists a vertex $x\in Q_{0}$ such that $V_i(x)\neq0$ for infinitely many $i \in \mathfrak{I}$. Let us take $\widehat{x}$ in $\Gamma_0$ with  $\F\widehat{x}=x$. Now, for infinitely many $i \in \mathfrak{I}$, there is $g_i \in G$, with $\widehat{V}_i^{g_i}(\widehat{x})\neq0$. Hence, $\mathcal{B}$ is not locally $(G,\tau_{\mathcal{B}})$-tilting finite.

    Let us now prove the necessity. Assume that  $\mathcal{A}$ is $\tau_{\mathcal{A}}$-tilting finite. It follows from Theorem \ref{mainresult} that $\mathcal{B} \ind$ has finitely many $G$-orbits of $(G, \tau_\mathcal{B})$-rigid modules. Moreover, each $y \in \Gamma_0$ supports only finitely many modules in a given $G$-orbit. The result follows from this.
\end{proof}

Recall that Question~\ref{isinimg?} asks if an $\mathcal{A}$-module $M$ with an open orbit under the natural $\GL(\dim M,\mathbb{K})$-action in the variety of $(\dim M)$-dimensional $\mathcal{A}$-modules is in $\mathcal{A}\mod_1$. In this section, we answer this question assuming some conditions on the Galois group $G$ or on the algebra $\mathcal{A}$.

Notice that we can define the following ${G}$-grading on the algebra $\mathcal{A}$:
\begin{equation}
\phantomsection\label{Ggrading}
    \mathcal{A}=\bigoplus_{g\in G}\mathcal{A}_{g},\;\text{where}\;\mathcal{A}_{g}\coloneqq\Hom_{\mathcal{B}}(\mathcal{A},\mathcal{A}^{g})
\end{equation}
that we will call the \emph{induced} $G$-grading from the Galois cover $p$.

A useful result in this direction, due to E. L. Green, can be summarized as follows, see \cite[Theorem 3.2]{EdGreen} or \cite[Theorem 4]{GreenVillaYoshino}: 
Consider $\mathcal{A}$ as a $G$-graded algebra with the induced $G$-grading from the Galois cover $p : (\Gamma,J)\rightarrow (Q,I)$. A module $M\in \mathcal{A}\mod$ is $G$-gradable if and only if $M$ is isomorphic to a module in the essential image of $\mathcal{F}_{\lambda}$. The following proposition is an analogue of \cite[Corollary 4.4]{AmiOpper}, and a result by W. Crawley-Boevey.

\begin{prop}
\phantomsection\label{Zimg}
    Let $\mathcal{A}$ be any locally bounded $\mathbb{K}$-linear category. If $G=\mathbb{Z}$, and $M\in\mathcal{A}\mod$ is a rigid $\mathcal{A}$-module, then $M$ is isomorphic to a module in the essential image of $\mathcal{F}_{\lambda}$.
\end{prop}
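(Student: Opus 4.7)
The plan is to translate the statement into a question about gradings and then into a group-theoretic lifting problem. Equip $\mathcal{A}$ with the induced $\mathbb{Z}$-grading from $p$ (equation~\ref{Ggrading}). By the theorem of E.~L.~Green cited immediately before the proposition, $M$ lies in the essential image of $\F_{\lambda}$ if and only if $M$ is $\mathbb{Z}$-gradable, so the task reduces to producing a compatible $\mathbb{Z}$-grading on $M$. Since $M$ is finite dimensional and $\mathcal{A}$ is locally bounded, only a finite-dimensional convex subcategory $\mathcal{A}'\subseteq\mathcal{A}$ containing the support of $M$ is relevant, and $\mathcal{A}'$ inherits the $\mathbb{Z}$-grading; this reduces me to the case of a finite-dimensional $\mathbb{Z}$-graded algebra.

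Next, I pass to the affine module variety $\mathrm{mod}_{\mathcal{A}'}(\mathbf{d})$ where $\mathbf{d}=\underline{\dim}\,M$. The $\mathbb{Z}$-grading induces an algebraic $\mathbb{K}^*$-action: $\lambda\cdot N$ has the same underlying vector space as $N$, but any homogeneous $a\in\mathcal{A}_g$ acts by $\lambda^{g}a$. A standard reformulation of $\mathbb{Z}$-gradability is that $N$ is $\mathbb{Z}$-gradable precisely when the orbit map $\lambda\mapsto\lambda\cdot N$ factors, as a morphism into the $\GL(\mathbf{d})$-orbit of $N$, through a cocharacter $\mathbb{K}^*\to\GL(\mathbf{d})$. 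So the goal becomes producing such a cocharacter for $N=M$.

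Rigidity enters at the next step. Because $\Ext^{1}_{\mathcal{A}'}(M,M)=0$, the $\GL(\mathbf{d})$-orbit $\mathcal{O}_M$ is open in $\mathrm{mod}_{\mathcal{A}'}(\mathbf{d})$, and each irreducible component of this variety supports at most one such open orbit. The $\mathbb{K}^*$-action commutes with $\GL(\mathbf{d})$ and, being connected, preserves every irreducible component; hence it fixes $\mathcal{O}_M$ set-wise, and in particular $\lambda\cdot M\cong M$ for every $\lambda$. Form the closed subgroup
\[
H=\{(g,\lambda)\in\GL(\mathbf{d})\times\mathbb{K}^* \mid g\cdot M=\lambda\cdot M\}.
\]
The second projection $\pi:H\twoheadrightarrow\mathbb{K}^*$ is surjective with kernel $\mathrm{Aut}_{\mathcal{A}}(M)=\mathrm{End}_{\mathcal{A}}(M)^{\times}$, which is irreducible (being open in the finite-dimensional vector space $\mathrm{End}_{\mathcal{A}}(M)$) and hence connected. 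Therefore $H$ is a connected linear algebraic group.

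Finally, I would invoke the classical fact that a surjective homomorphism of connected linear algebraic groups sends maximal tori onto maximal tori. Picking a maximal torus $T_H\subseteq H$, the restriction $\pi|_{T_H}:T_H\twoheadrightarrow\mathbb{K}^*$ is a surjection of tori, so there is a cocharacter $\phi:\mathbb{K}^*\to T_H$ with $\pi\circ\phi=\mathrm{id}_{\mathbb{K}^*}$. Composing with the first projection yields a $1$-parameter subgroup $\psi:\mathbb{K}^*\to\GL(\mathbf{d})$ with $\psi(\lambda)\cdot M=\lambda\cdot M$; its weight decomposition of the underlying vector space of $M$ furnishes a $\mathbb{Z}$-grading on $M$ compatible with the grading on $\mathcal{A}$. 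The main obstacle is precisely this last step, the construction of the lift $\phi$: it rests on the connectedness of $\mathrm{Aut}_{\mathcal{A}}(M)$ (hence of $H$) together with the torus structure of $\mathbb{K}^*$, both features special to $G=\mathbb{Z}$. This is why the analogous result for finitely generated free $G$ of higher rank, Theorem~\ref{thmD}, must be attacked by different means.
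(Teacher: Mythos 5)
Your argument follows the paper's skeleton up to a point: like the paper, you pass to the induced $\mathbb{Z}$-grading, reduce to a finite (convex) full subcategory so that $M$ becomes a rigid module over a finite-dimensional $\mathbb{Z}$-graded algebra, and finish with Green's theorem. Where the paper simply cites W.~Crawley-Boevey's result, or \cite[Corollary 4.4]{AmiOpper}, for the key statement that such a rigid module is $\mathbb{Z}$-gradable, you attempt to prove that statement from scratch via the $\mathbb{K}^*$-twisting action on the module variety, the open orbit, the group $H$, and a one-parameter subgroup. Up to and including the connectedness of $H$ and the surjectivity of $\pi|_{T_H}$ your argument is sound (and is essentially the standard proof strategy for the cited result).

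The final step, however, has a genuine gap: from the surjectivity of $\pi|_{T_H}\colon T_H\to\mathbb{K}^*$ you cannot conclude that there is a cocharacter $\phi\colon\mathbb{K}^*\to T_H$ with $\pi\circ\phi=\mathrm{id}$. A surjective homomorphism of tori onto $\mathbb{K}^*$ admits a homomorphic section if and only if the corresponding character is indivisible in $X^*(T_H)$; the squaring map $\mathbb{K}^*\to\mathbb{K}^*$, $t\mapsto t^2$, is surjective but has no section. Nothing you have proved excludes that $\pi|_{T_H}$ is a proper power of a character (equivalently, that $T_H\cap\ker\pi$ is disconnected), so the lift $\phi$ is not yet available, and this is exactly the point where the cited result does real work. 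The gap can be repaired without changing your framework: choose any cocharacter $\phi$ of $T_H$ on which $\pi$ is nontrivial, so that $\pi(\phi(\lambda))=\lambda^{k}$ with $k\neq 0$, and let $\psi$ be its $\GL(\mathbf{d})$-component. The weight decomposition $M=\bigoplus_n M_n$ under $\psi$ then satisfies $\mathcal{A}'_m M_n\subseteq M_{n+km}$. Each $M^{(r)}=\bigoplus_{n\equiv r\ (\mathrm{mod}\ k)}M_n$ is therefore a submodule, and setting $M^{(r)}_s\coloneqq M_{r+ks}$ regrades $M^{(r)}$ compatibly with the original grading of $\mathcal{A}'$; hence $M$ is a direct sum of gradable modules and so is gradable, after which Green's theorem finishes the proof as you intended. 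Alternatively, one may simply invoke the Crawley-Boevey/Amiot--Oppermann result at this point, which is what the paper does.
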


\begin{proof}
    As described in (\ref{Ggrading}), $\mathcal{A}$ is $G$-graded. Since $M$ is a finite-dimensional $\mathcal{A}$-module, we can choose a finite full subcategory $\mathcal{D}\subset \mathcal{A}$ such that $M$ considered as $\mathcal{D}$-module is also rigid. Since $M$ is a rigid $\mathcal{D}$-module, we know that it has an open orbit under the natural $\GL(\dim M, \mathbb{K})$-action on the variety of $(\dim M)$-dimensional $\mathcal{D}$-modules. The $G=\mathbb{Z}$-grading on $\mathcal{A}$ induces a $\mathbb{Z}$-grading on $\mathcal{D}$. Hence, by using W. Crawley-Boevey's result or \cite[Corollary 4.4]{AmiOpper} we get that $M$ is $\mathbb{Z}$-gradable. Therefore, the result follows from the above result of E. L. Green.
\end{proof}

Next, we extend the above result to the case where $G$ is free. Let $G=G_0$ be a finitely generated free group on a finite set $S = S_0$ of free generators. Therefore, every element of $G$ is a reduced word in $S$, and hence admits a length with respect to this representation. We call this the \emph{$S$-length} of an element. Note that since $S$ is finite, there are only finitely many elements in $G$ with a given $S$-length. Moreover, it follows from Nielsen-Schreier theorem that any subgroup of a free group is again free. 

For $i \ge 1$, we construct a normal subgroup $G_i$ of $G_{i-1}$ with set of free  generators $S_i$ as follows.  We let $S_i'$ be all elements of $S_{i-1}$ except one chosen element $a_i$ of smallest $S$-length. We let $G_i$ be the normal subgroup of $G_{i-1}$ generated by $S_i'$. We note that $S_i'$ generates $G_i$ as a normal subgroup, but is not, in general, a set of free generators for $G_i$. We have the following.

\begin{lem}
  A set $S_i$ of free generators for $G_i$ is given by $S_i = \{a_i^jxa_i^{-j} \mid x \in S_{i-1}\setminus\{a_i\}, j \ge 0\}$.
\end{lem}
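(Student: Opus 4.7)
The plan is to invoke the Nielsen--Schreier theorem applied to the pair $G_i \leq G_{i-1}$, using a Schreier transversal built from powers of $a_i$. The first task is to identify the quotient $G_{i-1}/G_i$. I would introduce the homomorphism $\varphi \colon G_{i-1} \to \mathbb{Z}$ defined on the free basis $S_{i-1}$ by $\varphi(a_i) = 1$ and $\varphi(x) = 0$ for every $x \in S_i'$. Since $G_i$ is the normal closure in $G_{i-1}$ of $S_i'$, we have $G_i \subseteq \ker \varphi$; and since no nontrivial power of $a_i$ lies in $\ker \varphi$, the image of $a_i$ has infinite order in the quotient. Hence $G_{i-1}/G_i$ is infinite cyclic, generated by the image of $a_i$; in particular, the powers of $a_i$ form a set of coset representatives.

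I would then take this set $T$ of powers of $a_i$ as a Schreier transversal for $G_i$ in $G_{i-1}$: the initial subwords of a reduced word $a_i^j$ are again powers of $a_i$, so the Schreier condition is satisfied. Applying Schreier's rewriting procedure, the free generators of $G_i$ are the nontrivial elements of the form $t \cdot s \cdot \overline{ts}^{\,-1}$, where $t \in T$, $s \in S_{i-1}$, and $\overline{ts}$ denotes the coset representative of $ts$ in $T$.

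Next I would evaluate these expressions case by case. For $s = a_i$ and $t = a_i^j$, we have $ts = a_i^{j+1} \in T$, so the resulting generator is trivial and is discarded. For $s = x \in S_i'$ and $t = a_i^j$, the fact that $x \in G_i$ forces $a_i^j x$ to lie in the coset $a_i^j G_i$, so $\overline{ts} = a_i^j$ and the generator is $a_i^j x a_i^{-j}$. The Nielsen--Schreier theorem then guarantees that this collection of nontrivial generators forms a free generating set for $G_i$.

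The final step is to identify this collection with the set $S_i$ described in the lemma. The main obstacle is reconciling the range of the index $j$ produced by Schreier's method (which naturally indexes the cosets of $G_i$ in $G_{i-1}$) with the range $j \geq 0$ appearing in the statement of $S_i$; this requires either pinning down the intended parametrization of the cosets of $G_i$, or verifying a posteriori that every element of $G_i$ admits a reduced expression in the asymmetric set $\{a_i^j x a_i^{-j} : j \geq 0,\, x \in S_i'\}$. Once this identification is justified, the Nielsen--Schreier conclusion immediately yields the claimed free generation property of $S_i$.
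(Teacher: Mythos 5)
Your Reidemeister--Schreier argument is correct and takes a genuinely different route from the paper's. The paper argues by hand: it first shows that $S_i\subset G_i$ and that the set of $S_i$-words is stable under conjugation by powers of $a_i$ (via the identity $a_i^j\bigl(\prod_k a_i^{r_k}x_ka_i^{-r_k}\bigr)a_i^{-j}=\prod_k a_i^{r_k+j}x_ka_i^{-r_k-j}$), so that the subgroup generated by $S_i$ is normal in $G_{i-1}$ and contains $S_i'$, hence is all of $G_i$; it then proves freeness separately by a direct cancellation argument on a putative reduced $S_i$-word equal to $1$. Your approach buys generation and freeness simultaneously from Nielsen--Schreier at the cost of invoking that machinery. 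Your identification of $G_{i-1}/G_i$ with $\mathbb{Z}$ via $\varphi$, the verification that the powers of $a_i$ form a Schreier transversal, and the computation of the Schreier generators $ts\,\overline{ts}^{\,-1}$ (trivial for $s=a_i$, equal to $a_i^jxa_i^{-j}$ for $s=x\in S_i'$, using normality of $G_i$ to get $\overline{a_i^jx}=a_i^j$) are all correct.

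The ``main obstacle'' you flag at the end is not a defect of your proof but of the statement: the index must range over $j\in\mathbb{Z}$, exactly as your Schreier computation produces, and the restriction to $j\ge 0$ cannot be repaired. With $j\ge 0$ the set generates only a proper free factor of $G_i$: already for $G=F(a,b)$ with $a_i=a$, the element $a^{-1}ba$ lies in the normal closure $G_i$ of $b$ but is not a product of elements $a^jb^{\pm1}a^{-j}$ with $j\ge 0$, by the normal form theorem in the free group on $\{a^jba^{-j}\mid j\in\mathbb{Z}\}$. The paper's own proof tacitly takes $j\in\mathbb{Z}$ as well: its conjugation identity produces exponents $r_k+j$ of arbitrary sign, and the later lemma on pushing $a_i$ treats conjugates $a_{i+1}^{p}y_ka_{i+1}^{-p}$ with $p$ possibly negative as elements of $S_{i+1}$. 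So you were right not to force the identification with the asymmetric set; state the lemma with $j\in\mathbb{Z}$ and your proof is complete.
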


\begin{proof}
    First, by normality of $G_i$ in $G_{i-1}$, it is clear that $S_i \subset G_i$. We also note that $S_{i-1}\setminus\{a_i\} \subset S_{i}$. To prove that $S_i$ is a set of generators for $G_i$, it suffices to prove that conjugating any $S_i$-word by a power of $a_i$ gives again an $S_i$-word. This follows from the fact that \[a_i^j\left(\prod_{k=1}^r a_i^{r_k}x_ka_i^{-r_k}\right)a_i^{-j} = \prod_{k=1}^r a_i^{r_k+j}x_ka_i^{-r_k-j}.\]
    Now, it remains to show that the set $S_i$ is a free generating set. Assume otherwise. Then there is a non-trivial $S_i$-word
    $\prod_{k=1}^s a_i^{r_k}x_ka_i^{-r_k}$ that is equal to $1$. Now, each $x_k$ is a word in $S_{i-1} \setminus \{a_i\}$. Hence, we can see the above expression as a word in $S_{i-1}$ in $G_{i-1}$. We therefore need $-r_k = r_{k+1}$ for $k=1,\ldots, s-1$ together with $r_1 = r_s = 0$. This forces the expression to be a non-trivial word in $S_{i-1}$, hence it cannot be $1$.
\end{proof}

\begin{lem}
\phantomsection\label{aboutG_i}
    Let $G$ be a free group on a finite set $S$ and let $G_i$ be as above. Then
    \begin{enumerate}
        \item For each $i$, we have that $G_{i-1}/G_{i}$ is infinite cyclic.
        \item For each positive integer $r$, there is a positive integer $m_r$ such that the free generating set $S_{m_r}$ of $G_{m_r}$ does not contain any element of $S$-length smaller than $r$.
    \end{enumerate}
\end{lem}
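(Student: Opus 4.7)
For (1), the preceding lemma has just established that $S_{i-1}$ is a free basis for $G_{i-1}$, and by construction $G_i$ is the normal closure in $G_{i-1}$ of $S_{i-1}\setminus\{a_i\}$. The quotient $G_{i-1}/G_i$ is therefore the free group on $S_{i-1}$ modulo the normal closure of all its free generators except $a_i$, which is the infinite cyclic group generated by the class of $a_i$. No further work is needed.

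For (2), my plan is to reduce everything to the single observation that the sequence $(a_i)_{i\ge 1}$ consists of pairwise distinct elements of $F(S)$, and then to run a finite-counting argument. To establish distinctness I would invoke part (1): since $a_i$ maps to a generator of $G_{i-1}/G_i\cong\mathbb{Z}$, it is not in $G_i$, and because the chain $G_i\supseteq G_{i+1}\supseteq\cdots$ is descending, $a_i\notin G_k$ for every $k\ge i$. In particular $a_i\notin S_k\subseteq G_k$ for such $k$, so $a_i\ne a_k$ whenever $k>i$.

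With distinctness in hand, the counting step is immediate: for any fixed $r$ the ball $\{w\in F(S):\lvert w\rvert_S<r\}$ is finite because $S$ is finite, so it can contain only finitely many of the distinct $a_i$. I then take $m_r$ strictly larger than every index $i$ for which $a_i$ lies in that ball; for $i>m_r$ one has $\lvert a_i\rvert_S\ge r$, and since $a_{i+1}$ is defined to be a shortest element of $S_i$, every element of $S_{m_r}$ has $S$-length at least $r$. The only sanity check along the way is that the $a_i$ are defined for every $i$, i.e., that $G_i$ has positive rank; this follows from the Nielsen-Schreier index formula together with $[G_{i-1}:G_i]=\infty$ from (1), which forces $G_i$ to have infinite rank as soon as $G_{i-1}$ has rank at least two. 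I expect no serious obstacle: the one pitfall to avoid is the tempting but painful route of controlling the $S$-lengths of the new conjugates $a_{i+1}^j x a_{i+1}^{-j}$ directly through cancellation analysis in $F(S)$, which the distinctness argument sidesteps entirely.
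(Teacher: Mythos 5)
The paper states this lemma without proof, so there is no ``paper proof'' to compare against; judged on its own, your argument is correct and complete. Part (1) is the expected presentation argument: $G_{i-1}$ is free on $S_{i-1}$ (by the preceding lemma, or by hypothesis when $i=1$), and $G_i$ is by definition the normal closure of $S_{i-1}\setminus\{a_i\}$ in $G_{i-1}$, so the quotient is the free group on the image of $a_i$, i.e.\ infinite cyclic. For part (2), your key point --- the $a_i$ are pairwise distinct because $a_i\notin G_i$ (it maps to a generator of $G_{i-1}/G_i\cong\mathbb{Z}$) while $a_k\in S_{k-1}\subseteq G_{k-1}\subseteq G_i$ for every $k>i$ --- combined with the finiteness of the ball of $S$-length $<r$ in $F(S)$ and the fact that $a_{m_r+1}$ is chosen of \emph{minimal} $S$-length in $S_{m_r}$, yields exactly the statement, and it rightly avoids any cancellation analysis of the conjugates $a_i^{j}xa_i^{-j}$. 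Two minor remarks: the appeal to the Nielsen--Schreier index formula to guarantee that $a_{i+1}$ exists is both slightly misstated (the index formula as usually cited concerns finite index) and unnecessary --- when $\lvert S_{i-1}\rvert\ge 2$ the explicit generating set $S_i$ from the preceding lemma contains $S_{i-1}\setminus\{a_i\}$ and is visibly nonempty (indeed infinite), which is all you need; and the degenerate case $\lvert S\rvert\le 1$, where $G_1$ is trivial and the tower stops, should simply be excluded at the outset, as the paper implicitly does.
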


Let $p: (\Gamma,J) \to (Q,I)$ be a Galois $G$-covering where $G$ is finitely generated free. We choose a connected fundamental domain $F_0$ of $\mathcal{A}$ by $G$ in $\Gamma$. Let us fix a vertex $x$ of $Q$. We choose a lift $\widehat{x}$ of $x$ in $\Gamma$ with the property that $\widehat{x}\in F_0$. For every $i\geq 1$, we define $\mathcal{A}_i$ to be the orbit category of $\mathcal{B}$ by $G_i$. We let $(\Omega_i,I_i)$ be a bound quiver for $\mathcal{A}_i$, so that $\mathcal{A}_{i}\cong \mathbb{K}\Omega_{i}/I_{i}$. Hence, we get a Galois $G_i$-covering $p_i: (\Gamma,J) \to (\Omega_i,I_i)$, and we denote the associated covering functor and the push-down functor by $\prescript{}{i}{\mathcal{F}}$ and $\prescript{}{i}{\mathcal{F}}_\lambda$, respectively. Moreover, for each $i\geq 1$, we also have a Galois $G/G_{i}$-covering $p^{i}: (\Omega_i,I_i) \to (Q,I)$, and we denote the associated covering functor and the push-down functor by $\F^{\,i}$ and $\F_{\lambda}^{\,i}$, respectively.

Now, we construct a fundamental domain $F_i$ of $\mathcal{A}_i$ in $\Gamma$ as follows. Let us start with $F_1$. We consider the infinite cyclic group $G/G_1$ and we note that a complete set of coset representatives of $G/G_1$ is given by the powers of $a_1$. Hence, we see that a fundamental domain of $\mathcal{A}_1$ in $\Gamma$ is given by $\{a_1^jv \mid j \in \mathbb{Z}, v \in F_0\}$. Now, suppose that $i \ge 1$ and that $F_i$ has been defined. We note that the powers of $a_{i+1}$ forms a complete set of representatives of the cosets of $G_{i}/G_{i+1}$. Hence, we see that a fundamental domain of $\mathcal{A}_{i+1}$ in $\Gamma$ is given by $\{a_{i+1}^jv \mid j \in \mathbb{Z}, v \in F_i\}$. By construction, we see that we have a chain
$$F_0 \subset F_1 \subset F_2 \subset \cdots$$
and all $F_i$ are connected. We can write
$$F_i = \{a_i^{r_i} \cdots a_2^{r_2}a_1^{r_1} v \mid v \in F_0, r_1, \ldots, r_i \in \mathbb{Z}\}.$$

\begin{lem} \label{Lemma: pushing a_i}
Let $g \in G$. Then there is a non-negative integer $s$ and integers $r_1, r_2, \ldots, r_s$ such that
\[g = a_s^{r_s} \cdots a_2^{r_2}a_1^{r_1}.\]
\end{lem}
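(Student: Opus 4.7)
The plan is to iteratively construct the exponents $r_1, r_2, \ldots$ using part~(1) of Lemma \ref{aboutG_i}, then force the process to terminate using part~(2). Set $g_0 := g$ and, given $g_{i-1}\in G_{i-1}$, take the unique $r_i\in\mathbb{Z}$ with $g_{i-1}G_i = a_i^{r_i}G_i$ inside $G_{i-1}/G_i\cong\mathbb{Z}$, then put $g_i := g_{i-1}\,a_i^{-r_i}\in G_i$ (this is well-defined since $G_i\triangleleft G_{i-1}$). This yields
\[
    g = g_i\,a_i^{r_i}a_{i-1}^{r_{i-1}}\cdots a_1^{r_1},
\]
so the lemma reduces to proving that $g_s = 1$ for some $s\ge 0$.

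The next step is to control the procedure by the $S_i$-length of $g_i$. Decomposing the unique reduced $S_{i-1}$-word of $g_{i-1}$ according to occurrences of $a_i$, write
\[
    g_{i-1} = a_i^{e_0}\,y_1^{\eta_1}\,a_i^{e_1}\,y_2^{\eta_2}\cdots y_n^{\eta_n}\,a_i^{e_n},\qquad y_j\in S_{i-1}\setminus\{a_i\},\ \eta_j=\pm 1,
\]
so that $\ell_{S_{i-1}}(g_{i-1}) = n + \sum_j\lvert e_j\rvert$ and $r_i = \sum_j e_j$. A telescoping rewrite gives
\[
    g_i = g_{i-1}\,a_i^{-r_i} = \prod_{j=1}^{n}\bigl(a_i^{E_{j-1}}\,y_j\,a_i^{-E_{j-1}}\bigr)^{\eta_j},\qquad E_k := e_0+\cdots+e_k,
\]
displaying $g_i$ as a product of $n$ signed elements of $S_i$. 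Hence $\ell_{S_i}(g_i) \le n = \ell_{S_{i-1}}(g_{i-1}) - \sum_j\lvert e_j\rvert$, and in particular $\ell_{S_i}(g_i) < \ell_{S_{i-1}}(g_{i-1})$ as soon as $a_i$ occurs anywhere in the reduced $S_{i-1}$-word of $g_{i-1}$.

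Since $(\ell_{S_i}(g_i))_i$ is a non-increasing sequence of non-negative integers, it stabilizes at some $L\ge 0$. If $L = 0$ then $g_s = 1$ for some $s$ and we are done. Otherwise, assume $L\ge 1$: for all $i\ge N$ the letter $a_i$ must fail to appear in the reduced $S_{i-1}$-word of $g_{i-1}$, which forces $r_i = 0$ and $g_i = g_{i-1}$. Call this stable value $g_\infty\ne 1$. Every generator occurring in the reduced $S_N$-word of $g_\infty$ lies in $S_N\setminus\{a_{N+1}\}$, and embeds into $S_{N+1}$ via the parameter $j=0$; because reduced words in the free group $G_{N+1}$ are unique, the same word is the reduced $S_{N+1}$-word of $g_\infty$. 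Iterating this observation, the finite set $T$ of generators appearing in this stable expression satisfies $T\subset S_i$ for every $i\ge N$. But part~(2) of Lemma \ref{aboutG_i}, applied with $r := 1 + \max_{t\in T}\ell_S(t)$, produces an $i$ for which every element of $S_i$ has $S$-length larger than every element of $T$, contradicting $T\subset S_i$.

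The main obstacle is the stabilization step: once $\ell_{S_i}(g_i)$ stops decreasing, one must show the same finite family of generators actually reappears in the reduced $S_i$-expression of $g_\infty$ for every larger~$i$. This is where the proof couples the algebra (uniqueness of reduced forms in a free group together with the inclusions $S_{i-1}\setminus\{a_i\}\hookrightarrow S_i$) with the geometric growth assertion of part~(2) of Lemma \ref{aboutG_i}, and turns what is otherwise only a monotonicity statement into an actual termination argument.
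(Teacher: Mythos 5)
Your argument follows the same main route as the paper's proof: the paper also defines $g_i \in G_i$ recursively so that $g = g_i a_i^{r_i}\cdots a_2^{r_2}a_1^{r_1}$, and compares $S_{i-1}$- and $S_i$-lengths by conjugating the letters other than $a_i$ past the powers of $a_i$ (your telescoping identity is a one-shot version of the paper's repeated rewriting $a_{i+1}^{p}y=\bigl(\prod_k a_{i+1}^{p}y_k a_{i+1}^{-p}\bigr)a_{i+1}^{p}$), giving strict decrease of the length exactly when $a_i$ occurs. Where you genuinely differ is the termination step: the paper ends with ``the result now follows'' and never addresses the possibility that the length stabilizes at a positive value, whereas you handle this case explicitly, showing the stabilized reduced word survives into every later $S_i$ (via $S_{i-1}\setminus\{a_i\}\subseteq S_i$ and uniqueness of reduced words in a free group) and then deriving a contradiction from Lemma \ref{aboutG_i}(2); this is arguably the intended role of that part of the lemma, which is not cited anywhere else in the paper, so your write-up is more complete than the paper's at precisely the delicate point. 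Two small remarks. First, Lemma \ref{aboutG_i}(2) as stated only produces \emph{some} $m_r$, which need not satisfy $m_r\ge N$, while your containment $T\subseteq S_i$ is only known for $i\ge N$; to close this you should either use the ``for all sufficiently large $i$'' form of (2), or argue directly that for $i\ge N$ the chosen elements $a_{i+1}$ are pairwise distinct (since $a_{i+1}\in G_i\setminus G_{i+1}$) and all of $S$-length at most $\max_{t\in T}\ell_S(t)$, contradicting the fact that a finitely generated free group has only finitely many elements of bounded $S$-length. Second, your conjugates $a_i^{E_{j-1}}y_j a_i^{-E_{j-1}}$ may involve negative conjugation exponents, which matches the paper's own usage in this proof but not the literal ``$j\ge 0$'' in the description of $S_i$; that discrepancy is inherited from the paper, not introduced by you.
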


\begin{proof}
    Note that there is $g_1 \in G_1$ such that $g = g_1 a_1^{r_1}$. By induction, for $i \ge 1$, there is $g_i \in G_i$ and integers $r_1, r_2, \ldots, r_i$ such that $g = g_i a_i^{r_i} \cdots a_2^{r_2}a_1^{r_1}.$ Let $t_i$ be the $S_i$-length of $g_i \in G_i$. We note that if no power of $a_{i+1}$ appears in the $S_i$-expression for $g_i$, then $t_{i+1}=t_i$. Otherwise, we can write
    $$g_i = x a_{i+1}^py$$
    where $p \ne 0$, $y$ has no power of $a_{i+1}$ in it and $x$ does not end with a non-zero power of $a_{i+1}$. Let $y = y_1y_2 \cdots y_q$ be written as a word in $S_i$. Then
    $$a_{i+1}^p y = \left(\prod_{k=1}^qa_{i+1}^py_k a_{i+1}^{-p}\right)a_{i+1}^{p}$$
    where the product is a word in $S_{i+1}$. Repeating this for all powers of $a_{i+1}$ appearing in the $S_i$ expression for $g_i$, we get  $$g = g_{i+1}a_{i+1}^u a_i^{r_i} \cdots a_2^{r_2}a_1^{r_1}$$
    for some integer $u$ and $t_{i+1} < t_i$. The result now follows.
\end{proof}

\begin{lem}
    We have $\bigcup_{k \ge 0}F_i = \Gamma$.
\end{lem}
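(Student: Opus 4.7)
The plan is to use Lemma \ref{Lemma: pushing a_i} together with the fact that $F_0$ is a fundamental domain for the action of $G$ on $\Gamma$. First I would fix an arbitrary object $w \in \Gamma$. Since $F_0$ is a fundamental domain for the free $G$-action on $\Gamma_0$, there exist a unique $g \in G$ and a unique $v \in F_0$ such that $w = g \cdot v$.

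Next, I would apply Lemma \ref{Lemma: pushing a_i} to the element $g$. This yields a non-negative integer $s$ and integers $r_1,r_2,\ldots,r_s$ such that
\[g = a_s^{r_s}\cdots a_2^{r_2}a_1^{r_1}.\]
Substituting into $w = g \cdot v$, we obtain
\[w = a_s^{r_s}\cdots a_2^{r_2}a_1^{r_1}\cdot v,\]
which is precisely the defining form of elements of $F_s$. Hence $w \in F_s \subseteq \bigcup_{i \ge 0} F_i$. The reverse inclusion $\bigcup_{i \ge 0}F_i \subseteq \Gamma$ is immediate.

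I expect no substantial obstacle here: the argument is a direct unwinding of definitions once Lemma \ref{Lemma: pushing a_i} is in hand, which has already reduced any word in $G$ to the normal form required by the construction of the $F_i$. The only mildly subtle point is remembering that $F_0$ is a fundamental domain for the whole group $G$ (not merely a transversal for one of the subgroups $G_i$), so that every object of $\Gamma$ really does lie in some $G$-translate of $F_0$, and then noting that each translating element admits the required factorization of bounded length $s$ so that the translate lands in $F_s$.
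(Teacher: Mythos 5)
Your argument is correct and is essentially identical to the paper's proof: take $y = gv$ with $v \in F_0$, apply Lemma \ref{Lemma: pushing a_i} to write $g = a_s^{r_s}\cdots a_2^{r_2}a_1^{r_1}$, and conclude $y \in F_s$ from the explicit description of $F_s$. (The uniqueness of $g$ and $v$ is not needed, only existence, but this does not affect the proof.)
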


\begin{proof}
    Let $y \in \Gamma_0$. Then there is $g \in G$ and $v \in F_0$ such that $gv = y$. Now, it follows from Lemma \ref{Lemma: pushing a_i} that there exists $s$ and integers $r_1, r_2, \ldots, r_s$ such that $g = a_s^{r_s} \cdots a_2^{r_2}a_1^{r_1}$. Hence, $y \in F_s$.
\end{proof}

\begin{lem}\label{lem: encompassing endpoints}
    Let $r$ be a positive integer. Using the above notation, let $i$ be such that the fundamental domain $F_i$ of $\mathcal{A}_i$ contains all the endpoints of walks in $\Gamma$ of length at most $r$ which start at $\widehat{x}$. Suppose $\prescript{}{i}{\mathcal{F}}\,\widehat{x}=\bar x$. If $M\in\mathcal{A}_{i}\mod$ is supported at $\bar x$ and of dimension at most $r$, then there exists $N\in\mathcal{B}\mod$ such that $\prescript{}{i}{\mathcal{F}}_\lambda N \cong M$. 
\end{lem}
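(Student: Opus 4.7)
The plan is to construct the desired $N$ by transporting $M$ from a neighborhood of $\bar{x}$ in $\Omega_i$ to a corresponding neighborhood of $\widehat{x}$ sitting inside $F_i$, then extending by zero. The first observation to record is that the hypothesis on $F_i$ gives a local trivialization of the covering $\prescript{}{i}{\mathcal{F}}$ near $\widehat{x}$: any walk in $\Omega_i$ from $\bar{x}$ of length at most $r$ lifts uniquely from $\widehat{x}$ to a walk in $\Gamma$ whose endpoint lies in $F_i$, and since $F_i$ contains at most one preimage of each vertex of $\Omega_i$, this endpoint depends only on the terminal vertex of the walk in $\Omega_i$, not on the walk itself. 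I would then reduce to the case where $M$ is indecomposable so that its support $S$ in $\Omega_i$ is connected in the double quiver, contains $\bar{x}$, and has at most $r$ vertices; consequently every $v \in S$ sits at distance at most $r-1$ from $\bar{x}$ and admits a well-defined \emph{chosen lift} $\widehat{v} \in F_i$.

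Next, I would define $N \in \mathcal{B}\mod$ by $N(\widehat{v}) = M(v)$ on chosen lifts and $N = 0$ elsewhere, with arrow actions copied from $M$ on arrows between two chosen lifts and set to zero otherwise. The identity $\prescript{}{i}{\mathcal{F}}_\lambda N \cong M$ then follows directly from the push-down formula, since for each $v$ the direct sum $\bigoplus_{\widehat{y}/v} N(\widehat{y})$ receives a contribution only from the chosen lift (when $v \in S$), yielding $M(v)$. Finite-dimensionality of $N$ is automatic from $|S| \le r$.

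The main obstacle is verifying that $N$ is genuinely a $\mathcal{B}$-module, i.e.\ that $N(\rho) = 0$ for every $\rho \in J$. The core lemma to establish is the bijection: paths in $\Gamma$ starting at a chosen lift $\widehat{v}$ whose intermediate vertices all belong to the set of chosen lifts correspond, via $\prescript{}{i}{\mathcal{F}}$, bijectively to paths in $\Omega_i$ starting at $v$ whose intermediate vertices lie entirely in $S$. The forward direction is clear; the converse I would prove by induction on path length using the distance bound, observing that if one stands at a chosen lift $\widehat{v}_{j-1}$ (at distance at most $r-1$ from $\widehat{x}$) and traverses an arrow whose $\prescript{}{i}{\mathcal{F}}$-image lands in $S$, the concatenated walk from $\widehat{x}$ has length at most $r$, so its endpoint lies in $F_i$, and uniqueness of $F_i$-preimages forces this endpoint to coincide with the chosen lift of the image vertex.

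With this bijection in hand, the verification is routine: for $\rho \in J$ starting at a chosen lift, one writes $N(\rho) = \sum_k c_k N(p_k)$, where paths $p_k$ leaving the chosen lifts contribute zero and paths that stay contribute $M(\prescript{}{i}{\mathcal{F}}(p_k))$; meanwhile paths in $\prescript{}{i}{\mathcal{F}}(\rho)$ that leave $S$ automatically vanish under $M$ since $M$ vanishes on vertices outside $S$. Consequently $N(\rho)$ equals $M(\prescript{}{i}{\mathcal{F}}(\rho))$, which is zero because $\prescript{}{i}{\mathcal{F}}(\rho) \in I_i$.
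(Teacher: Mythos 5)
Your construction is essentially the paper's: the paper likewise defines $N$ by transporting $M$ to the fundamental domain, setting $N(z)=M(\prescript{}{i}{\mathcal{F}}z)$ for $z\in F_i$ and $N=0$ elsewhere, and then asserts $\prescript{}{i}{\mathcal{F}}_\lambda N\cong M$ ``from the definition of the push-down functor.'' What you add --- the core lemma that a path of $\Omega_i$ starting in the support $S$ and staying in $S$ lifts, from the chosen lift in $F_i$, to a path whose vertices are again chosen lifts (via the distance bound $\le r-1$ from $\widehat{x}$ and uniqueness of $F_i$-preimages) --- is exactly the verification the paper leaves implicit: it is what guarantees both that $N$ annihilates $J$, so that $N$ is a genuine $\mathcal{B}$-module, and that the push-down acts by $M(\alpha)$ on every arrow of the support. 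So this is a filling-in of the same argument, not a different route, and the details you supply are correct. One caveat: your opening reduction to indecomposable $M$ is not literally licensed by the stated hypotheses, since an indecomposable summand of $M$ need not be supported at $\bar x$, and then its support need not lie within distance $r-1$ of $\bar x$; what the argument genuinely uses is that the support of $M$ is connected and contains $\bar x$ (hence lies in the $r$-ball lifted into $F_i$). The paper's own one-line justification relies on the same implicit reading, so this is a shared assumption rather than a defect relative to the paper; to handle a decomposable $M$ one would treat each summand separately with its own base vertex and a suitably large fundamental domain, rather than invoking the lemma's hypotheses for that summand.
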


\begin{proof}
    Let $z\in\Gamma_{0}\cup\Gamma_{1}$. We define a module $N\in\mathcal{B}\mod$ in the following way: if $z\in F_{i}$, then $N(z)\coloneqq M(\prescript{}{i}{\mathcal{F}}z)$; otherwise $N(z)\coloneqq 0$. Since $M\in\mathcal{A}_{i}\mod$ is of dimension at most $r$ supported at $\bar x$, and $F_i$ contains all the endpoints of walks in $\Gamma$ of length at most $r$ which start at $\widehat{x}$, it follows from the definition of the push-down functor $\prescript{}{i}{\mathcal{F}}_\lambda$ that $\prescript{}{i}{\mathcal{F}}_\lambda N \cong M$.
\end{proof}

\begin{thm}
    Let $p: (\Gamma,J) \to (Q,I)$ be a Galois $G$-covering where $G$ is finitely generated free. If $M\in\mathcal{A}\mod$ is a rigid $\mathcal{A}$-module, then there exists a $N\in\mathcal{B}\mod$ such that $\mathcal{F}_\lambda N \cong M$.
\end{thm}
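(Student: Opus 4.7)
The plan is to lift $M$ stepwise through the tower of Galois coverings
\[
\mathcal{B} \longrightarrow \mathcal{A}_i \longrightarrow \mathcal{A}_{i-1} \longrightarrow \cdots \longrightarrow \mathcal{A}_1 \longrightarrow \mathcal{A},
\]
applying Proposition~\ref{Zimg} to each intermediate $\mathbb{Z}$-covering $\mathcal{A}_k \to \mathcal{A}_{k-1}$ (available since $G_{k-1}/G_k \cong \mathbb{Z}$ by Lemma~\ref{aboutG_i}(1)), and then invoking Lemma~\ref{lem: encompassing endpoints} to cross the last gap from $\mathcal{A}_i$ down to $\mathcal{B}$. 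The two structural facts that keep the induction running are that each push-down preserves total dimension and, since every quotient $G/G_k$ is torsion-free (an extension of torsion-free groups is torsion-free, by induction from $G/G_1 \cong \mathbb{Z}$), both preserves and reflects indecomposability; and that a lift $M_k$ of a rigid $M_{k-1}$ is itself rigid via the standard adjunction computation
\[
\Ext^1_{\mathcal{A}_{k-1}}(M_{k-1}, M_{k-1}) \;\cong\; \bigoplus_{g \in G_{k-1}/G_k}\Ext^1_{\mathcal{A}_k}(M_k, M_k^{\,g}),
\]
whose left-hand side vanishing forces the $g = e$ summand to vanish.

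First I would reduce to the case where $M$ is indecomposable, since a direct sum of lifts of the indecomposable summands is a lift of $M$, and summands of rigid modules are rigid. Set $r = \dim M$, pick $x \in Q_0$ with $M(x) \ne 0$, and fix a lift $\widehat{x} \in \Gamma_0$. Since $\Gamma$ is locally finite only finitely many vertices are reached from $\widehat{x}$ by walks of length at most $r$, and since $\bigcup_k F_k = \Gamma$ with the $F_k$ nested, I can choose an index $i$ such that $F_i$ contains all of these endpoints; set $\bar x = \prescript{}{i}{\mathcal{F}}\widehat{x}$.

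The iteration begins with $M_0 = M$. Given a rigid finite-dimensional $M_{k-1} \in \mathcal{A}_{k-1}\mod$ with $\mathcal{F}^{k-1}_\lambda M_{k-1} \cong M$, Proposition~\ref{Zimg} applied to $\mathcal{A}_k \to \mathcal{A}_{k-1}$ produces some $M_k \in \mathcal{A}_k\mod$ pushing down to $M_{k-1}$. The Ext identity makes $M_k$ rigid, and indecomposability of $M_{k-1}$ forces indecomposability of $M_k$. After $i$ iterations I obtain a rigid indecomposable $M_i \in \mathcal{A}_i\mod$ of total dimension $r$ with $\mathcal{F}^i_\lambda M_i \cong M$.

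The remaining step, and the one I expect to be the main obstacle, is to position $M_i$ so that Lemma~\ref{lem: encompassing endpoints} applies. Since $M_i$ is indecomposable, $\operatorname{supp}(M_i)$ is connected in $\Omega_i$; having at most $r$ vertices, it has diameter at most $r-1$. The fiber $(\mathcal{F}^i)^{-1}(x)$ meets $\operatorname{supp}(M_i)$, and its points form a single $G/G_i$-orbit, so replacing $M_i$ by a suitable $G/G_i$-translate I may assume $\bar x \in \operatorname{supp}(M_i)$. Then every vertex of $\operatorname{supp}(M_i)$ lies within walking distance at most $r - 1 < r$ of $\bar x$, which is precisely the ``supported at $\bar x$ of dimension at most $r$'' hypothesis of Lemma~\ref{lem: encompassing endpoints}. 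The lemma then supplies $N \in \mathcal{B}\mod$ with $\prescript{}{i}{\mathcal{F}}_\lambda N \cong M_i$, and since push-down composes along the tower,
\[
\mathcal{F}_\lambda N \cong \mathcal{F}^i_\lambda\!\left(\prescript{}{i}{\mathcal{F}}_\lambda N\right) \cong \mathcal{F}^i_\lambda M_i \cong M,
\]
as required.
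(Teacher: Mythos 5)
Your proof is correct and takes essentially the same route as the paper's: iterate Proposition \ref{Zimg} up the tower of infinite-cyclic coverings $\mathcal{A}_k \to \mathcal{A}_{k-1}$ while preserving rigidity and total dimension, then descend from $\mathcal{A}_i$ to $\mathcal{B}$ via Lemma \ref{lem: encompassing endpoints} and compose the push-downs. The only differences are cosmetic: you certify rigidity of the lifts through the Ext-adjunction rather than the paper's exactness-plus-Corollary \ref{non-radtonon-rad} argument, and your reduction to indecomposable $M$ together with the $G/G_i$-translation spells out the point, left implicit in the paper, that $M_i$ may be assumed to be supported at $\bar x$.
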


\begin{proof}
    Using the above notations, we show that, for any $i \ge 1$, the module $M$ lies in the essential image of the push-down functor $\mathcal{F}^{\,i}_\lambda$. We proceed by induction on $i$.
    
    For $i = 1$, this follows from Proposition \ref{Zimg} using that the Galois group $G/G_1$ is infinite cyclic. We let $M_1$ in $\mathcal{A}_1\mod$ be such that $\mathcal{F}^{\,1}_\lambda M_1 \cong M$. We note that $M_1$ has the same (total) dimension as $M$. Using exactness of $\mathcal{F}^{\,1}_\lambda$ and Corollary \ref{non-radtonon-rad}, we see that the image under $\mathcal{F}^{\,1}_\lambda$ of a non-split self-extension is non-split. This shows that $M_1$ is rigid. By induction, we assume that we have constructed a finite dimensional rigid module $M_i$ over $\mathcal{A}_i$ such that $\mathcal{F}_\lambda^{\,i} M_i \cong M$. Again, we note that the dimension of $M_i$ is the same as the dimension of $M$. We consider the Galois $G_{i}/G_{i+1}$-covering $q_i:\mathcal{A}_{i+1} \to \mathcal{A}_i$, where $G_{i}/G_{i+1}$ is infinite cyclic. We let $\prescript{}{i}{\mathcal{F}}^{\,i+1}_\lambda: \mathcal{A}_{i+1} \mod \to \mathcal{A}_i \mod$ be the corresponding push-down functor. Again, by Proposition \ref{Zimg}, it follows that there exists a finite dimensional module $M_{i+1}$ over $\mathcal{A}_{i+1}$ such that $\prescript{}{i}{\mathcal{F}}^{\,i+1}_\lambda M_{i+1} \cong M_i$. Hence, $\mathcal{F}^{\,i+1}_\lambda M_{i+1} =  \mathcal{F}_\lambda^{\,i}\, \prescript{}{i}{\mathcal{F}}^{\,i+1}_\lambda M_{i+1} \cong \mathcal{F}_\lambda^{\,i} M_i  \cong M$.

    Now, assume that $M$ has dimension $r$ and is supported at vertex $x$ in $\mathcal{A}$. Let $\widehat{x}$ be a lift of $x$ in $\Gamma$. Using Lemmas~\ref{lem: encompassing endpoints}, we let $i_r$ be a positive integer such that the fundamental domain $F_{i_r}$ contains all endpoints of walks of length at most $r$ starting at $\widehat{x}$ in $\Gamma$. We let $\prescript{}{i_r}{\mathcal{F}}\,\widehat{x}=\bar x$. The module $M_{i_r}$ constructed above is supported at $\bar x$ and has dimension $r$. By Lemma~\ref{lem: encompassing endpoints}, we conclude there exists a $N\in\mathcal{B}\mod$ such that $\prescript{}{i_r}{\mathcal{F}}_\lambda N \cong M_{i_r}$. Therefore, we have $\F_{\lambda}N=\mathcal{F}_\lambda^{\,i_r} \prescript{}{i_r}{\mathcal{F}}_\lambda N\cong\mathcal{F}_\lambda^{\,i_r}M_{i_r}\cong M$.
\end{proof}

\begin{cor}
    Let $p: (\Gamma,J) \to (Q,I)$ be a Galois $G$-covering where $G$ is finitely generated free. If $M\in\mathcal{A}\mod$ is a rigid $\mathcal{A}$-module, then $M$ is $G$-gradable.
\end{cor}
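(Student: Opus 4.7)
The proof is essentially immediate from the combination of two ingredients already present in the paper. The plan is to first apply the theorem proved just above, which asserts that for a Galois $G$-covering with $G$ finitely generated free, every rigid $\mathcal{A}$-module lies in the essential image of the push-down functor $\mathcal{F}_\lambda$. This supplies some $N\in\mathcal{B}\mod$ with $\mathcal{F}_\lambda N \cong M$, so in particular $M\in\mathcal{A}\mod_1$.

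Next, I would invoke E. L. Green's characterization recalled earlier in this section (see \cite[Theorem 3.2]{EdGreen} or \cite[Theorem 4]{GreenVillaYoshino}): with respect to the induced $G$-grading on $\mathcal{A}$ coming from the Galois cover $p$, as defined in equation (\ref{Ggrading}), a finite dimensional $\mathcal{A}$-module is $G$-gradable if and only if it is isomorphic to a module in the essential image of $\mathcal{F}_\lambda$. Applying this equivalence to $M$ yields the conclusion that $M$ is $G$-gradable.

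There is no real obstacle here; all of the substantive work has already been carried out in the preceding theorem (where the induction over the chain of subgroups $G_0 \supset G_1 \supset \cdots$ and the shrinking of fundamental domains $F_i$ did the heavy lifting by reducing to the $G=\mathbb{Z}$ case via Proposition \ref{Zimg}). The corollary is then a one-line deduction, and I would present it as such, explicitly noting the grading on $\mathcal{A}$ being used and citing both the preceding theorem and Green's result to make the logical chain transparent.
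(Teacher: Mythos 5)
Your proposal is correct and matches the paper's intended argument exactly: the corollary is an immediate consequence of the preceding theorem (rigid modules lie in the essential image of $\mathcal{F}_\lambda$ when $G$ is finitely generated free) combined with E. L. Green's characterization of $G$-gradable modules as those in the essential image of the push-down functor, which is precisely the one-line deduction you describe. No gaps.
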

 
\begin{cor}
    If $\mathcal{A}$ is a monomial algebra and $M$ is a rigid $\mathcal{A}$-module, then $M$ lies in the essential image of any push-down functor.
\end{cor}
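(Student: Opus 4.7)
The plan is to reduce the statement to Theorem~\ref{thmD} by passing to the universal Galois cover and then transferring the lift along the canonical factorization of push-down functors supplied by the universal property of $\pi$. The key preliminary observation is that for a monomial algebra $\mathcal{A}=\mathbb{K}Q/I$ the fundamental group $\pi_1(Q,I)$ is free of finite rank. Indeed, $I$ has a $\mathbb{K}$-basis consisting of paths lying in $I$, so if $\sum_{i=1}^{r}\lambda_i w_i\in I$ for distinct paths $w_i$ and non-zero $\lambda_i\in\mathbb{K}^{*}$, then already each $w_i\in I$. In particular every non-empty subsum lies in $I$, which precludes the existence of any minimal relation in $(Q,I)$. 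Hence $N(Q,I,x_0)$ is trivial, so $\pi_1(Q,I)=\pi_1(Q,x_0)$ is free of rank $\chi(Q)=|Q_1|-|Q_0|+1$.

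First, I would apply Theorem~\ref{thmD} to the universal Galois cover $\pi:(\widetilde{Q},\widetilde{I})\to(Q,I)$, whose Galois group $\pi_1(Q,I)$ is finitely generated free by the above observation. Writing $\widetilde{\mathcal{B}}:=\mathbb{K}\widetilde{Q}/\widetilde{I}$ and denoting the associated push-down functor by $\widetilde{\F}_\lambda:\widetilde{\mathcal{B}}\mod\to\mathcal{A}\mod$, this produces a module $\widetilde{N}\in\widetilde{\mathcal{B}}\mod$ with $\widetilde{\F}_\lambda\widetilde{N}\cong M$.

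Next, given an arbitrary Galois covering $p:(\Gamma,J)\to(Q,I)$ with push-down functor $\F_\lambda:\mathcal{B}\mod\to\mathcal{A}\mod$, the universal property of $\pi$ recorded in diagram~\ref{UnivPropOfGaloisCover} furnishes a Galois covering $\pi^*:(\widetilde{Q},\widetilde{I})\to(\Gamma,J)$ satisfying $\pi=p\pi^*$. Passing to pull-up functors we obtain $\widetilde{\F}_\bullet\cong\pi^{*}_\bullet\,\F_\bullet$, and taking left adjoints gives $\widetilde{\F}_\lambda\cong\F_\lambda\,\pi^{*}_\lambda$. Setting $N:=\pi^{*}_\lambda\widetilde{N}\in\mathcal{B}\mod$ then yields $\F_\lambda N\cong\widetilde{\F}_\lambda\widetilde{N}\cong M$, which establishes the corollary.

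I do not anticipate a serious obstacle. Beyond invoking Theorem~\ref{thmD}, the two ingredients are the direct check that a monomial bound quiver admits no minimal relations (so $\pi_1(Q,I)$ is free), and the formal compatibility of push-down functors with composition of Galois coverings; the latter is immediate by adjunction from the corresponding compatibility of pull-ups. If one wished to be extra careful, the only point meriting a line of justification is that the composition $\pi=p\pi^*$ is indeed a Galois covering and the induced pull-up factors as stated, both of which are standard.
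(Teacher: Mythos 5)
Your proposal is correct and follows essentially the same route as the paper: pass to the universal cover, observe that a monomial ideal admits no minimal relations so $\pi_1(Q,I)=\pi_1(Q,x_0)$ is finitely generated free, apply the main theorem, and descend to an arbitrary covering via the universal property $\pi=p\pi^{*}$. You merely spell out the last step (the factorization $\widetilde{\F}_\lambda\cong\F_\lambda\,\pi^{*}_\lambda$ obtained by taking left adjoints of the pull-up factorization), which the paper leaves implicit.
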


\begin{proof}
    We consider the bound quiver $(Q,I)$ for $\mathcal{A}$ where $I$ is monomial, and we let $\pi: (\widetilde{Q},\widetilde{I}) \to (Q,I)$ be the universal covering. Fix a vertex $x_0\in Q_0$. Since $I$ is monomial, the Galois group is $\pi_1(Q,I) = \pi_1(Q,x_0)$ and is finitely generated free. Hence, $M$ is in the essential image of the corresponding push down functor $\widetilde{\mathcal{F}}_\lambda$. By the universal property of the universal covering, the result follows.
\end{proof}

\begin{rmk}
    If $\mathcal{A}$ is a string algebra, then it follows from the definition of the push-down functor that any string module lies in $\mathcal{A}\mod_1$, and hence is $G$-gradable. However, in general, not every string module is rigid.
\end{rmk}

\section{Example}

\begin{ex}
    Consider the algebra $\mathcal{A}\coloneqq\mathbb{K}Q/I$ given by the quiver
    \[
        \begin{tikzcd}
	       {Q:} & 1 & 2 & 3 & 4
	       \arrow["a", from=1-2, to=1-3]
	       \arrow["d", bend left, from=1-2, to=1-4]
	       \arrow["b", from=1-3, to=1-4]
	       \arrow["e"', bend right, from=1-3, to=1-5]
	       \arrow["c", from=1-4, to=1-5]
        \end{tikzcd}
        \;\;\;\;\text{and}\;\;\;\; I=\langle ba,cb\rangle.
    \]
    Let $M(\mathfrak{u})\in \mathcal{A}\mod$ be the string module given by the following string $\mathfrak{u}$
    
    \[
        \mathfrak{u}:
        \begin{tikzcd}
	       2 && 1 \\
	       & 3 && 2 && 3 \\
	       &&&& 4
	       \arrow["b", no head, from=1-1, to=2-2]
	       \arrow["a", no head, from=1-3, to=2-4]
	       \arrow["d", no head, from=2-2, to=1-3]
	       \arrow["e", no head, from=2-4, to=3-5]
	       \arrow["c", no head, from=3-5, to=2-6]
        \end{tikzcd}
    \]

We illustrate the above constructions on the rigid module $M(\mathfrak{u})$, although it should already be clear that  $M(\mathfrak{u})$ lies in the image of any push-down functor, because it is a string module. Consider the universal $G$-covering $\pi: (\Gamma, J) \to (Q,I)$ where $G = \pi_1(Q,2)=\pi_1(Q, I)$ is the free group on two generators $u,v$ where $u=ad^{-1}b$ and $v=e^{-1}cb$. The module $M(\mathfrak{u})$ is supported on the simple cycle given by $v$. Therefore, we select $a_1 = v$ and we get $G_1 = \langle v^iuv^{-i} \mid i \in Z\rangle$. This gives us a Galois $\mathbb{Z}$-covering $(\Omega_1, I_1) \to (Q,I)$ where $(\Omega_1, I_1)$ is given by 

\tikzcdset{scale cd/.style={every label/.append style={scale=#1},
    cells={nodes={scale=#1}}}}

    \[
        \begin{tikzcd}[scale cd=0.83]
	       &&&&&& \ddots \\
	       &&&& {1_{-1}} & {2_{-1}} & {3_{-1}} & {4_{-1}} \\
	       {(\Omega_{1},I_{1}):} &&& {1_{0}} & {2_{0}} & {3_{0}} & {4_{0}} \\
	       && {1_{1}} & {2_{1}} & {3_{1}} & {4_{1}} \\
	       & {1_{2}} & {2_{2}} & {3_{2}} & {4_{2}} \\
	       &&& \ddots
	       \arrow["{{e_{-2}}}", from=1-7, to=2-8]
	       \arrow["{{a_{-1}}}", from=2-5, to=2-6]
	       \arrow["{{d_{-1}}}", bend left, from=2-5, to=2-7]
	       \arrow["{{b_{-1}}}", from=2-6, to=2-7]
	       \arrow["{{e_{-1}}}", from=2-6, to=3-7]
	       \arrow["{{c_{-1}}}", from=2-7, to=2-8]
	       \arrow["{{a_{0}}}", from=3-4, to=3-5]
	       \arrow["{{d_{0}}}", bend left, from=3-4, to=3-6]
	       \arrow["{{b_{0}}}", from=3-5, to=3-6]
	       \arrow["{{e_{0}}}", from=3-5, to=4-6]
	       \arrow["{{c_{0}}}", from=3-6, to=3-7]
	       \arrow["{{a_{1}}}", from=4-3, to=4-4]
	       \arrow["{{d_{1}}}", bend left, from=4-3, to=4-5]
	       \arrow["{{b_{1}}}", from=4-4, to=4-5]
	       \arrow["{{e_{1}}}", from=4-4, to=5-5]
	       \arrow["{{c_{1}}}", from=4-5, to=4-6]
	       \arrow["{{a_{2}}}", from=5-2, to=5-3]
	       \arrow["{{d_{2}}}", bend left, from=5-2, to=5-4]
	       \arrow["{{b_{2}}}", from=5-3, to=5-4]
	       \arrow["{{e_{2}}}", from=5-3, to=6-4]
	       \arrow["{{c_{2}}}", from=5-4, to=5-5]
        \end{tikzcd}
    \]
    where ${I}_1=\langle b_{i}a_{i},b_{i}c_{i},i\in\mathbb{Z}\rangle$. Let $\mathcal{A}_{1}\coloneqq\mathbb{K}\Omega_1/I_1$. Consider the string module $M(\mathfrak{u}_{1})\in \mathcal{A}_{1}\mod$ given by the following string $\mathfrak{u}_1$ in $\Omega_1$

    \[
        \mathfrak{u}_1:
        \begin{tikzcd}
	       {2_{0}} && {1_0} \\
	       & {3_{0}} && {2_0} && {3_1} \\
	       &&&& {4_1}
	       \arrow["{b_{0}}", no head, from=1-1, to=2-2]
	       \arrow["{a_0}", no head, from=1-3, to=2-4]
	       \arrow["{d_0}", no head, from=2-2, to=1-3]
	       \arrow["{e_0}"', no head, from=2-4, to=3-5]
	       \arrow["{c_1}"', no head, from=3-5, to=2-6]
        \end{tikzcd}
    \]
    Let $\mathcal{F}^{\,1}_{\lambda}$ be the push-down functor associated to the above Galois covering. We see that $\mathcal{F}^{\,1}_{\lambda}\,M(\mathfrak{u}_1)=M(\mathfrak{u})$. Now, we have that $\pi_1(\Omega_1, I_1)$ can be identified with the group $G_1$. Under this identification, we have $u = a_0d_0^{-1}b_0$ and $v=e_{-1}^{-1}c_0b_0$ so that $\pi_1(\Omega_1, I_1) = \langle v^i u v^{-i} \mid i \ge 0 \rangle$. The module $M(\mathfrak{u}_1)$ is supported on the simple cycle given by the free generator $u = v^0 u v^0$. By taking $a_2 = u$, this gives us a Galois $\mathbb{Z}$-covering $(\Omega_2, I_2) \to (\Omega_1,I_1)$ where $(\Omega_2, I_2)$ is given by

    \tikzcdset{scale cd/.style={every label/.append style={scale=#1},
    cells={nodes={scale=#1}}}}
    \[
        \begin{tikzcd}[scale cd=0.6]
	       &&&& \ddots &&& \iddots \\
	       &&& {1_{-1,0}} & {2_{-1,0}} & {3_{-1,0}} & {4_{-1,0}} \\
	       &&&&& \ddots &&&& \iddots \\
	       {(\Omega_{2},I_{2}):} &&&&& {1_{0,-1}} & {2_{0,-1}} & {3_{0,-1}} & {4_{0,-1}} \\
	       && {1_{0,0}} & {2_{0,0}} & {3_{0,0}} & {4_{0,0}} \\
	       &&&& {1_{0,1}} & {2_{0,1}} & {3_{0,1}} & {4_{0,1}} \\
	       &&&&& \iddots & \ddots \\
	       & {1_{1,0}} & {2_{1,0}} & {3_{1,0}} & {4_{1,0}} \\
	       \iddots && {1_{1,1}} & {2_{1,1}} & {3_{1,1}} & {4_{1,1}} \\
	       &&&& \ddots
	       \arrow["{{d_{-2,0}}}", from=1-5, to=2-6]
	       \arrow["{{e_{-1,-1}}}", from=1-8, to=2-7]
	       \arrow["{{a_{-1,0}}}", from=2-4, to=2-5]
            \arrow["{{d_{-1,0}}}", from=2-4, to=5-5]
            \arrow["{{b_{-1,0}}}", from=2-5, to=2-6]
        	\arrow["{{e_{-1,0}}}", from=2-5, to=3-6]
        	\arrow["{{c_{-1,0}}}", from=2-6, to=2-7]
        	\arrow["{{e_{0,-2}}}", from=3-10, to=4-9]
        	\arrow["{{a_{0,-1}}}", from=4-6, to=4-7]
        	\arrow["{{d_{0,-1}}}", bend left, from=4-6, to=4-8]
        	\arrow["{{b_{0,-1}}}", from=4-7, to=4-8]
        	\arrow["{{e_{0,-1}}}", from=4-7, to=5-6]
        	\arrow["{{c_{0,-1}}}", from=4-8, to=4-9]
        	\arrow["{{a_{0,0}}}", from=5-3, to=5-4]
        	\arrow["{{d_{0,0}}}", from=5-3, to=8-4]
        	\arrow["{{b_{0,0}}}", from=5-4, to=5-5]
        	\arrow["{{e_{0,0}}}", from=5-4, to=6-8]
        	\arrow["{{c_{0,0}}}", from=5-5, to=5-6]
        	\arrow["{{a_{0,1}}}", from=6-5, to=6-6]
        	\arrow["{{d_{0,1}}}"', bend left, from=6-5, to=6-7]
        	\arrow["{{b_{0,1}}}", from=6-6, to=6-7]
        	\arrow["{{e_{0,1}}}", from=6-6, to=7-7]
        	\arrow["{{c_{0,1}}}"', from=6-7, to=6-8]
        	\arrow["{{e_{1,-1}}}", from=7-6, to=8-5]
        	\arrow["{{a_{1,0}}}", from=8-2, to=8-3]
        	\arrow["{{d_{1,0}}}", from=8-2, to=9-1]
        	\arrow["{{b_{1,0}}}", from=8-3, to=8-4]
        	\arrow["{{e_{1,0}}}", from=8-3, to=9-6]
        	\arrow["{{c_{1,0}}}", from=8-4, to=8-5]
        	\arrow["{{a_{1,1}}}", from=9-3, to=9-4]
        	\arrow["{{d_{1,1}}}", bend left, from=9-3, to=9-5]
        	\arrow["{{b_{1,1}}}", from=9-4, to=9-5]
        	\arrow["{{e_{1,1}}}", from=9-4, to=10-5]
        	\arrow["{{c_{1,1}}}"', from=9-5, to=9-6]
        \end{tikzcd}
    \]
    where $I_2=\langle b_{i,j}a_{i,j}\,,\,c_{i,j}b_{i,j}\mid i,j\in\mathbb{Z}\rangle$. Let $\mathcal{A}_{2}\coloneqq\mathbb{K}\Omega_2/I_2$. Consider the string module $M(\mathfrak{u}_{2})\in \mathcal{A}_{2}\mod$ given by the following string $\mathfrak{u}_2$ in $\Omega_2$

    \[
        \mathfrak{u}_2:
        \begin{tikzcd}
        	{2_{1,0}} && {1_{0,0}} \\
        	& {3_{1,0}} && {2_{0,0}} && {3_{0,1}} \\
        	&&&& {4_{0,1}}
        	\arrow["{b_{1,0}}", no head, from=1-1, to=2-2]
        	\arrow["{a_{0,0}}", no head, from=1-3, to=2-4]
        	\arrow["{d_{0,0}}"', no head, from=2-2, to=1-3]
        	\arrow["{e_{0,0}}", no head, from=2-4, to=3-5]
        	\arrow["{c_{0,1}}"', no head, from=3-5, to=2-6]
        \end{tikzcd}
    \]
    Let ${\mathcal{F}}^{\,2}_{\lambda}$ be the push-down functor associated to the above Galois covering $(\Omega_2, I_2) \to (Q,I)$. We see that ${\mathcal{F}}^{\,2}_{\lambda}\,{M}(\mathfrak{u}_2)\cong {M}(\mathfrak{u})$, and that ${M}(\mathfrak{u}_2)$ is not supported on any simple cycle in $\Omega_2$. It is clear that
    ${M}(\mathfrak{u}_2)$ lies in the essential image of the push-down functor $_2\mathcal{F}_\lambda: {\mathcal{B}} \mod \to \mathcal{A}_2 \mod$ corresponding to the Galois $G_2$-covering $p_{2}: (\Gamma, J) \to (\Omega_2, I_2)$. Indeed, one just takes the fundamental domain $F_2$ defined above and the ${\mathcal{B}}$-module $M$ with $_2\mathcal{F}_\lambda(M) \cong {M}(\mathfrak{u}_2)$ is simply the module $M$ such that for $y' \in F_2$ with $p_{2}(y')=y$, we set $M(y') = {M}(\mathfrak{u}_2(y))$. We notice that $\mathcal{F}_\lambda(M) \cong M(\mathfrak{u})$.
    
\end{ex}

\section*{Acknowledgement}
This work was supported by National Sciences and Engineering Research Council of Canada [RGPIN-2018-04513 to C.P.,  RGPIN-2024-05714 to D.W.]; and by the Canadian Defence Academy Research Programme.

\medskip

\end{document}